\newtheorem{proposition}{Proposition}[section]
\newtheorem{theorem}[proposition]{Theorem}
\newtheorem{lemma}[proposition]{Lemma}
\newtheorem{assumption}[proposition]{Assumption}
\theoremstyle{definition}
\newtheoremstyle{step}{3pt}{0pt}{}{}{\bf}{}{.5em}{}
\theoremstyle{step} 
\DeclareMathAlphabet{\mathpzc}{OT1}{pzc}{m}{it}
\providecommand{\N}{{\ensuremath{\mathbb{N}}}}
\providecommand{\R}{{\ensuremath{\mathbb{R}}}}
\newcommand{\E}{{\ensuremath{\mathbb{E}}}}
\renewcommand{\P}{{\ensuremath{\mathbb{P}}}}
\providecommand{\1}{{\ensuremath{\mathbbm{1}}}}
\providecommand{\Var}{\operatorname{Var}}
\newcommand{\cadlag}{c\`adl\`ag}
\providecommand{\eps}     {{\ensuremath{\varepsilon}}}
\providecommand{\D}{{\ensuremath{\mathcal{D}}}}
\newcommand{\BIGOP}[1]{\mathop{\mathchoice%
    {\raise-0.22em\hbox{\huge $#1$}}%
    {\raise-0.05em\hbox{\Large $#1$}}{\hbox{\large $#1$}}{#1}}}
\newcommand{\bigtimes}{\BIGOP{\times}}
\providecommand{\Var}{\operatorname{Var}}
\providecommand{\betab}{{\ensuremath{\bar{\beta}}}}
\newcommand{\sgn}{\operatorname{sgn}}
\begin{document}
\title{Costly defense traits in structured populations}
\author{
  Martin Hutzenthaler\thanks{Research supported by the DFG in the Priority Program
    ``Probabilistic Structures in Evolution'' (SPP 1590)} ,
  Felix Jordan\footnotemark[1] ,
  and Dirk Metzler\footnotemark[1]}

\maketitle
\makeatletter
\let\@makefnmark\relax
\let\@thefnmark\relax
\@footnotetext{\emph{AMS 2010 subject classification:} 60K35, 92D25}
\@footnotetext{\emph{Key words and phrases:} costly defense, altruism, group selection, kin selection,
  interacting Wright--Fisher diffusions, local competition, extinction, survival,
  Lotka--Volterra equations, McKean--Vlasov limit, many-demes limit, host--parasite, predator--prey,
  parasite defense, slave rebellion, slavemaker ants}
\makeatother

\begin{abstract}
  We propose a model for the dynamics of frequencies
  of a costly
  defense trait.
  More precisely, we consider Lotka--Volterra-type models involving a
  prey (or host) population consisting of two types and a predator (or parasite)
  population, where one type of prey individuals -- modeling carriers of
  a defense trait -- is more effective in defending against the predators
  but has a weak reproductive disadvantage.
  Under certain assumptions we prove that the relative frequency of
  these  defenders in the total
prey
population converges
to spatially structured Wright--Fisher diffusions with
frequency-dependent migration rates.
For the many-demes limit (mean-field approximation) hereof,
we show that the defense trait goes to fixation/extinction 
if and only if the selective disadvantage is smaller/larger than
an explicit function of the ecological model parameters.
\end{abstract}
%
%
%
%
\section{Introduction} \label{sec:introduction}
If individuals of a prey population defend their conspecifics
when predators arrive, this may be dangerous for the acting individual
and thus be costly in the evolutionary sense of decreasing
the expected number of their surviving offspring.
For example, alarm calling has been described as altruistic behavior
\citep{Tamachi87}.
Altruism is defined as a behavior
that decreases
the reproductive success of the actor while increasing the reproductive
success of one or more recipients \citep{Hamilton1964a}.
In most natural systems, non-altruistic individuals benefit from
altruistic individuals without suffering from the fitness 
disadvantage and, thus, have a direct reproductive advantage.
So how can genetically inherited selfless behavior be explained
by natural selection?
This problem has bothered biologists since Charles Darwin, who
reflected the puzzle of sterile social insects such as the worker castes of ants
in his famous book ``The Origin of Species''~\citep{Darwin1859}.

In behavioral biology and game theory there exist several explanations
for the emergence of altruism and cooperation.
The central idea of inclusive fitness theory is that helping direct relatives
benefits the reproductive success of the altruists' genes.
This idea is formalized in Hamilton's rule, which states that traits increase in
frequency if $C < B\cdot R $, where $C$ is the reproductive cost to the actor, 
$B$ is the additional reproductive benefit gained by the recipient, and $R$ is the
relatedness of the recipient, that is the probability of sharing the same allele
by descent, e.g., $1/2$ for two sisters and $1/8$ for two cousins \citep{Hamilton1964a}.
In other words, genes can spread in a population by kin selection if the
inclusive fitness $B\cdot R-C$ is positive.
However, general applicability of Hamilton's rule is controversial;
e.g., the fundamental criticism of inclusive fitness theory in \cite{NowakEtAl2010}
provoked a strong response including a rebuttal from 137 researchers \citep{AbbotEtAl2011}.

Another explanation for the emergence of altruistic behavior
is the intensively debated theory of group selection; see, e.g., \cite{Wade1978}
and \cite{Queller1992}. The central idea is that groups of cooperators grow faster
and, therefore, split earlier or into more groups than groups of defectors 
\citep{TraulsenNowak2006}.
The importance of group selection (or more generally
multilevel selection) in evolution remains controversial
\citep[see, e.g.][]{MaynardSmith1976, GoodnightStevens1997, GoodnightWade2000,
Nowak2006,WestEtAl2007,Traulsen2010, Gardner2015}.
Many scientists consider group selection (or multi-level selection) and inclusive
fitness theory as equivalent \citep[e.g.][]{Marshall2011}, while others 
\cite[e.g.][]{vVG+12} argue that group selection is a more general concept.
This conflict may be due to disagreement on the precise definition of the
variables in Hamilton's rule \citep{GWW11,BO15}.

%
If groups are formed by related individuals, it may seem obvious that group/kin selection
can lead to the evolution of altruistic traits. 
If, however, individuals also compete with their neighbors -- which may include relatives --
for space, food or other resources, altruistic behavior can be very costly
regarding reproductive success.
\citep{WPD92,VD10}.
In this case, kin selection can still be effective if competition works on
a larger spatial scale than altruism or if competition is reduced as the population is growing
or sends out migrants to conquer empty demes \citep{WPD92,Tay92b,AT08,VD10,VDW12b}.
The latter can occur in a meta-population -- that is, a population that is substructured
into demes that are affected by frequent local extinction and recolonization events.
Already \cite{MaynardSmith1964} proposed that a meta-population dynamic can generate between-deme
variation that is required for group selection, and indeed this was subsequently demonstrated
for several theoretical models \citep{Lev70,Esh72,LK74,Wil73,SW78,Tay92b,AT08,MJP+16}.
The mathematical analyses of \cite{Uye79} however showed for a range of possible selection
pressures that group selection can also work in island population models without local
extinctions, assuming however an extreme migration model in which all surviving offspring
are randomly distributed among all islands.

For the eco-evolutionary dynamics in group or kin selection models it is crucial how
the benefit of altruism depends on the frequency of altruists and on other factors.
\cite{SC11} show for some cases of \textit{diminishing returns}, which means that
the benefit per altruist decreases with the total number of altruists,
that kin selection can maintain the co-existence of altruists and cheaters.
Mechanisms that lead to diminishing returns may include feed-back interactions with the
abundance of other species,
for example predators or parasites if the altruistic trait consist in defending other
individuals against these enemies \citep{BBMW09,DuncanEtAl2011,DLvB+12,BLG13,VB19}.

%

In this article we focus on the evolution of a trait of defense against parasites or predators.
Examples of altruistic or at least costly
defense traits include self-sacrificial colony defense in social
insects \citep{HW10,RHR10,ShorterRueppell2012},
costly chemical alarm signaling in aphids \citep{MR04, Mondor2007, Wu2010},
suicidal defense of bacteria against pathogen infection \citep{FukuyoEtAl2012},
transmission-blocking immunity in vertebrates against \textit{Plasmodium} \citep{MMdS+87},
and slave rebellion in ants \citep{PammingerEtAl2014, MJP+16}.
A number of recent studies propose spatially distributed predator--prey
and host-parasite models and study these models via computer simulations 
\citep[see e.g.][]{CominsHassellMay1992,RandKeelingWilson1995,
HaraguchiSasaki2000,RauchSayamaBar-Yam2002, RauchSayamaBar-Yam2003,
GoodnightEtal2008,BestEtAl2011,DLvB+12,BLG13,LG15,BSL19}.
We note that
our models and results may also be applicable to constitutive resistence or defense
traits in  host--parasite systems if alternative strategies such as induced
resistance \citep{BSL19} and parasite tolerance \citep{VB19} are negligible.


We begin with a structured predator--prey model with a prey population consisting of
two types, which we denote in the following as 
defenders
and 
non-defenders.
Regarding population structure, we assume that the habitat of prey and predators
is subdivided into demes.
Prey as well as predators are panmictic within demes and migrate between the demes.
Our model for the interaction of predators and prey in each deme is based on a
Lotka--Volterra model \citep{Lotka1920,Volterra1926} allowing for competition among prey
individuals of the same deme.
Predators in the same deme compete for prey as well as other resources.
The 
defenders
in the prey population have a smaller reproduction rate than
non-defenders
and reduce the growth of the predator population in the same deme.
Thus, 
non-defenders
profit from the abundance of 
defenders
in their deme and surpass them
in fitness.
We note that the behavior of defenders is not exactly altruistic
if
offspring of defenders benefit from fewer predators
and might produce more grandchildren compared to non-defenders.
In our view, however, costly defense traits are closely related
to altruism and -- similarly to altruism -- it is a priori not clear
whether the behavior of costly defense ``pays off''.



We approximate our first model by an asymptotic model of infinitely large deme population sizes.
As common in population genetics \citep{Ewens2004,durrett2008pm4dna} we measure time in units
of $N$ generations, where $N$ is proportional to effective deme population sizes,
and thus obtain Wright--Fisher diffusions maintaining
random fluctuations in trait frequencies -- so-called genetic drift \citep{Kimura68} --
even in the limit of large demes.
%
Moreover we scale migration rates such that the expected number of migrants per generation
and deme is constant;
which is also common in population genetics \citep{durrett2008pm4dna}.
%
It turns out that, as we let $N$ tend to infinity,
the population genetic
(or ``evolutionary'') time scale of $N$ generations separates
from the ecological time scale of predator--prey interactions.
In other words, on the population genetic time scale, the deme population sizes
of prey and predators instantaneously reach their limits according to the Lotka--Volterra
interactions, where these limits depend on the frequency of 
defenders
in the respective deme.
Our main result is that the diffusion approximation of the frequencies of 
defenders
is the well-known Kimura's stepping stone model with negative selection
\citep[e.g.][Chapter 6]{Etheridge2011}
only with the local population sizes replaced by
a function of the local 
defender
frequencies.

Our diffusion approximation of the frequencies of 
defenders
is mathematically hard to analyze
due to the lack of a suitable dual process,
which is due to variation of local effective population sizes.
For this reason we investigate a meta-population setting at which we arrive by
considering uniform migration on $D\in\N$ demes and then letting the number $D$ of demes
tend to infinity.
In this meta-population setting we prove that the defense trait will become fixed
in the entire prey population if the
`cost of the defense trait'
is smaller than the so-called
`benefit of defense'.
This shows that predator--prey dynamics can indeed induce group selection
that
maintains a defense trait that is under negative selection in each deme,
without the need of extinction (and re-colonization) of demes.
Group selection on a trait requires that the frequency of the trait in a deme must
be correlated with the number of migrants produced by the deme \citep{Queller1992}.
This correlation can only be non-zero if the frequency of the trait varies between
the demes.
While migration reduces this variation, the only factor in our model that can increase it
is genetic drift.

The rest of this article is structured as follows:
In section \ref{sec:introduction_mainresults} we introduce our model and specify
our model assumptions. Moreover we state our main results: first, weak convergence
of the frequencies of 
defenders
as the local population sizes converge to infinity
(Theorem \ref{thm:conv.freq.altruist}) and, second, 
long-term fixation/extinction of 
defenders
in a meta-population setting depending
on whether the
`cost of defense' $\alpha$ is smaller than or bigger than the parameter
$\beta$ which we denote as
`benefit of defense'
(Theorem \ref{thm:longtermbehavior}).
Sections \ref{sec:conv_altr_freq}
and \ref{sec:longtermbehavior} are devoted to the proofs
of 
Theorem \ref{thm:conv.freq.altruist} and of
Theorem \ref{thm:longtermbehavior}, respectively.
In section \ref{sec:mckeanvlasov} we prove that diffusion approximation
of the frequencies of 
defenders
with uniform migration on $D\in\N$ demes
converges as $D\to\infty$ to a McKean--Vlasov equation.
Finally, in section \ref{sec:invasion} we consider the many-demes limit
of the $D$-demes equation when initially only a few demes are populated
and prove that the total mass process of this many-demes limit
converges to infinity or to zero in probability depending
on whether the
`cost
of defense' $\alpha$ is smaller than or bigger than the parameter
$\beta$.

\subsection{Notation} \label{sec:introduction_notation}
Throughout this article, we will use the following notation.
We define $[0,\infty]:=[0,\infty)\cup \{\infty\}$.
We will use the conventions that $0^0=1$, $0\cdot\infty=0$, and that for any
$x\in(0,\infty)$ we have that $\tfrac{x}{\infty}=0$ and $\tfrac{x}{0}=\infty$.
For all $x,y\in\R$ we define $x^+:=\max\{x, 0\}$, $\sgn (x):=\1_{x>0}-\1_{x<0}$,
and $x\land y:=\min\{x,y\}$.
We define $\sup(\emptyset):=-\infty$ and $\inf(\emptyset):=\infty$.
For a topological space $(E,\mathcal{E})$ we denote by $\mathcal{B}(E)$ the Borel
sigma-algebra of $(E,\mathcal{E})$.
Moreover we agree on the convention that zero times an undefined expression is set
to zero.
For every countable set $\D$ and every $\sigma=(\sigma_i)_{i\in\D}\in(0,\infty)^\D$
define a function $\|\cdot\|_{\sigma}\colon\R^\D\to[0,\infty]$
by $\R^\D\ni z=(z_i)_{i\in\D} \mapsto \|z\|_\sigma:=\sum_{i\in\D}\sigma_i|z_i|$ and
define $l_{\sigma}^1:=\{z\in\R^{\D}\colon \|z\|_\sigma<\infty\}$.

\section{Main results} \label{sec:introduction_mainresults}
\subsection{Model}\label{sec:model}
We assume that predator (or parasite) and prey (or host) individuals populate
demes given by a countable set $\mathcal{D}$,
 and the prey population consists of 
 defenders and non-defenders.
For all $i\in \mathcal{D}$
let $A^N_t(i)$, $C^N_t(i)$, and $P^N_t(i)$
be
the total numbers of 
defenders, non-defenders,
and predators
in deme $i$ at time $t$ measured in units of $N\in\N$ individuals.
We will consider the large population limit $N\to\infty$.
The total number 
of host/prey individuals in deme $i\in\mathcal{D}$ is denoted as
$H^N_i=A^N_i+C^N_i$.
Let $\lambda,K,\delta,\nu,\gamma,\eta,\rho\in(0,\infty)$.
For every $N\in\N$, let
 $\kappa_H^N,\kappa_P^N,\alpha^N,\beta_H^N,\beta_P^N,\iota_H^N,\iota_P^N\in[0,\infty)$
satisfy $\alpha^N<\lambda$.
We assume that the prey and predator populations interact in each deme according
to a Lotka--Volterra model with growth rate $\lambda$, carrying capacity $K$, 
per-predator death rate $\delta$ for the prey, per-prey growth rate $\eta$,
competition rate $\gamma$, and death rate $\nu$ for the predator.
Furthermore,
we assume that being a defender increases its death rate by
$\alpha^N$,
and -- as effect of the defense behavior --
decreases the birth rate of predators in the same deme by $\rho$.
We further assume that prey (resp.\ predator) individuals migrate at rate
$\kappa_H^N m(i,j)$ 
(resp.\ $\kappa_P^N m(i,j)$)
from deme $i$ to deme $j$
where $m\in[0,\infty)^{\mathcal{D}\times\mathcal{D}}$
is a symmetric stochastic matrix.
In addition we assume that additional births happen at rate
$g_h$ (resp.\ $g_P$)
per prey (resp.\ predator) individual
and that additional deaths happen at the same rate
$g_H$ (resp.\ $g_P$)
per prey (resp.\ predator) individual.
Moreover, in order
to avoid extinction of the prey populations on the ecological time scale,
we assume immigration of prey (resp.\ predator) individuals
 at rate $\iota_H^N$ (resp.\ $\iota_P^N$) into each deme.
This immigration,
however, is only assumed for technical reasons
and does not appear in the diffusion approximation of the 
defender
frequencies.
To keep the analysis simple, we assume that the probability of an immigrating prey to be
defender
is equal to the current frequency of 
defenders.
Summarizing, for every $N\in\N$ the process 
$(N\cdot A^N,N\cdot C^N,N\cdot P^N)$ is a Markov process
with state space $(\N_0^3)^{\mathcal{D}}$ and transition rates
(where $i\in\mathcal{D}$ and
$\forall k\in\mathcal{D}\colon a_k,c_k,p_k\in\N_0$):
\begin{equation}  \begin{split}
  &(a_k,c_k,p_k)_{k\in\mathcal{D}}\to(a_k+1_{k=i},c_k,p_k)_{k\in\mathcal{D}} \colon\quad
  a_i(g_H+\lambda)
  \\
  &(a_k,c_k,p_k)_{k\in\mathcal{D}}\to(a_k-1_{k=i},c_k,p_k)_{k\in\mathcal{D}} \colon\quad
  a_i(g_H+\tfrac{\lambda}{K}\tfrac{a_i+c_i}{N}+\delta \tfrac{p_i}{N}+\alpha^N)
  \\
  &(a_k,c_k,p_k)_{k\in\mathcal{D}}\to(a_k,c_k+1_{k=i},p_k)_{k\in\mathcal{D}} \colon\quad
  c_i(g_H+\lambda)
  \\
  &(a_k,c_k,p_k)_{k\in\mathcal{D}}\to(a_k,c_k-1_{k=i},p_k)_{k\in\mathcal{D}} \colon\quad
  c_i(g_H+\tfrac{\lambda}{K}\tfrac{a_i+c_i}{N}+\delta \tfrac{p_i}{N})
  \\
  &(a_k,c_k,p_k)_{k\in\mathcal{D}}\to(a_k,c_k,p_k+1_{k=i})_{k\in\mathcal{D}} \colon\quad
  p_i(g_P+\eta \tfrac{c_i}{N}+(\eta-\rho)\tfrac{a_i}{N})
  \\
  &(a_k,c_k,p_k)_{k\in\mathcal{D}}\to(a_k,c_k,p_k-1_{k=i})_{k\in\mathcal{D}} \colon\quad
  p_i(g_P+\nu+\gamma \tfrac{p_i}{N})
  \\
  &(a_k,c_k,p_k)_{k\in\mathcal{D}}\to(a_k-1_{k=i}+1_{k=j},c_k,p_k)_{k\in\mathcal{D}} \colon\quad
  a_i\kappa_H^N m(i,j)
  \\
  &(a_k,c_k,p_k)_{k\in\mathcal{D}}\to(a_k,c_k-1_{k=i}+1_{k=j},p_k)_{k\in\mathcal{D}} \colon\quad
  c_i\kappa_H^N m(i,j)
  \\
  &(a_k,c_k,p_k)_{k\in\mathcal{D}}\to(a_k,c_k,p_k-1_{k=i}+1_{k=j})_{k\in\mathcal{D}} \colon\quad
  p_i\kappa_P^N m(i,j)
\end{split}     \end{equation}
According to a Lotka-Volterra modeling approach and according to the usual diffusion
approximation with SDEs (e.g.\ \cite{ShigaShimizu1980}),
 $A^N$, $C^N$ and $P^N$
satisfy approximatively the stochastic differential equations (SDEs)
%
%
%
\begin{equation} \begin{split} \label{eq:ACP}
  A_t^N(i)=
  &
  A_0^N(i)+
  \int_0^t
  \kappa_H^N\sum_{j\in\D} m(i,j)\left(A_s^N(j)-A_s^N(i)\right)
  +A_s^N(i)\left[\lambda\left(1- \tfrac{A_s^N(i)+C_s^N(i)}{K}\right)
  -\delta P_s^N(i)-\alpha^N\right]\,ds
  \\
  &\quad
  +\int_0^t\iota_H^N \tfrac{A_s^N(i)}{A_s^N(i)+C_s^N(i)}\,ds
  +\int_0^t\sqrt{\beta_H^N A_s^N(i)}\,dW_s^{A,N}(i),
  \\
  C_t^N(i)=
  &C_0^N(i)
  +\int_0^t\kappa_H^N\sum_{j\in\D} m(i,j)\left(C_s^N(j)-C_s^N(i)\right)
  +C_s^N(i)\left[\lambda\left(1-\tfrac{A_s^N(i)+C_s^N(i)}{K}\right)
  -\delta P_s^N(i)\right]\,ds
  \\
  &\quad
  +\int_0^t\iota_H^N \tfrac{C_s^N(i)}{A_s^N(i)+C_s^N(i)}\,ds
  +\int_0^t\sqrt{\beta_H^N C_s^N(i)}\,dW_s^{C,N}(i),
  \\
  P_t^N(i)=
  &
  P_0^N(i)
  +\int_0^t\kappa_P^N\sum_{j\in\D} m(i,j)\left(P_s^N(j)-P_s^N(i)\right)
  \,ds
  \\
  &\quad
  +\int_0^tP_s^N(i)\left[-\nu-\gamma P_s^N(i)
  +\eta C_s^N(i)+(\eta-\rho )A_s^N(i)\right]+\iota_P^N\,ds
  +\int_0^t\sqrt{\beta_P^N P_s^N(i)}\,dW_s^{P,N}(i),
\end{split} \end{equation}
where $W^{A,N}(i), W^{C,N}(i), W^{P,N}(i)\colon[0,\infty)\times\Omega\to\R$, $i\in\mathcal{D}$,
are independent standard Brownian motions,
$\beta_H^N:=2\tfrac{g_H+\lambda}{N}$ and 
$\beta_P^N:=\left.2\left(g_P + \left(\nu+\tfrac{\lambda\gamma\cdot(K\eta-\nu)}{
        \lambda\gamma+\delta K\nu}\right)\right)\right/N$.
Note that these settings for $\beta_H^N$ and $\beta_P^N$ are based on the heuristic
assumption that the system is close to the equilibrium of predator--prey
interactions and extend the approximations of \cite{HutzMetz21} to parameter ranges
in which $\lambda$, $\eta$ and $\nu$ are not negligible compared to $g_H$
and $g_P$.

Existence of solutions to \eqref{eq:ACP}, which we assume here,
can be established
in suitable Liggett-Spitzer spaces
if $\mathcal{D}$ is an Abelian group and if $m$ is translation invariant and irreducible;
cf.~Proposition 2.1 in~\cite{HutzenthalerWakolbinger2007}.
Finally, for our analysis we assume that
$\rho<\eta$
and we set
 $a=\tfrac{\lambda\gamma+\delta K\eta}{\delta K\rho}$
and
$b=\tfrac{\delta\rho}{\delta\nu+\lambda\gamma}$
(which appear in the equilibrium state \eqref{eq:def.h_infty.p_infty}
for prey and predators).
\subsection{Computer simulations}
\label{sec:sim}
To illustrate the model and the roles of certain model parameters, we present here
the results of computer simulations for a setting with finite-size populations in
1000 demes.
When nothing else is stated, we used the parameter settings
$\lambda=2$, $K=1000$, $\delta=0.02$, $\alpha:=\alpha^N=0.01$, $g_P=0$, $\eta=0.005$,
$\rho=0.004$, $\nu=1$, $\gamma=0.01$, $\kappa^N_H=\kappa^N_P=0.01$ and
$\iota_{H, \mathrm{defend.}}^N=\iota_{H, \mathrm{non-defend.}}^N=\iota_P^N=10^{-6}$
    without population-size scaling,
that is with $N=1$.
For the parameter $g_H$ we used the values $0$, $0.5$ and $5$, resulting in the
values of $0.008$, $0.01$ and $0.028$ of
$\beta:=\beta_H^NN \frac{\delta\rho}{\delta\nu + \lambda\gamma}$,
whose analogue in the asymptotic model, the $\beta$ defined in
Theorem \ref{thm:conv.freq.altruist}, will turn out to be the
benefit-of-defense parameter that is crucial
for the fixation probability of the defense allele
(Theorem \ref{thm:longtermbehavior}).
We initialized each deme with 1000 prey individuals,
of which a uniformly drawn random fraction $x$ were defenders.
The number of predators in the deme was then initialized with
$\frac{\lambda}{\delta}\cdot\left(1-1/(Kb\cdot(a-x))\right)$, which is
inspired by the equilibrium frequency of the Lotka--Volterra model.

For the simulations we applied a $\tau$-leaping approach \citep{Gillespie2001}.
For this, we chose a time span $\tau$ and iterated simulation steps in which we
simulated for each deme, each deme sub-population (defenders, non-defenders or predator)
of current size $n$ and each type of event (birth, death or emigration)
of per-individual rate $r$ a binomial number with parameters $(n, p=\tau\cdot r\cdot n)$
of the corresponding events to take place in the next time span of length $\tau$.
We chose $\tau$ small enough to make the binomial-distribution
parameter $p=\tau\cdot r\cdot n$ smaller than 0.01 under most conditions.
In most simulations we performed 50,000,000 $\tau$-leap iterations and read out the
total numbers of defenders, non-defenders and predators once every 200,000 iterations.
\begin{figure}
  \centering
  \includegraphics[width=8cm]{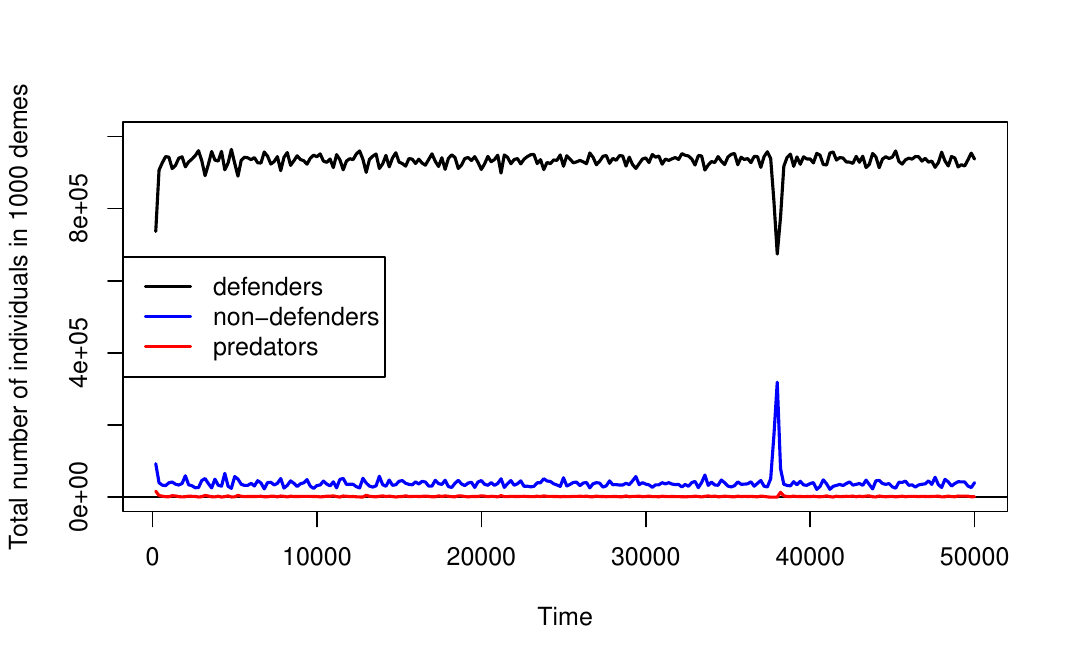}\\
  \includegraphics[width=8cm]{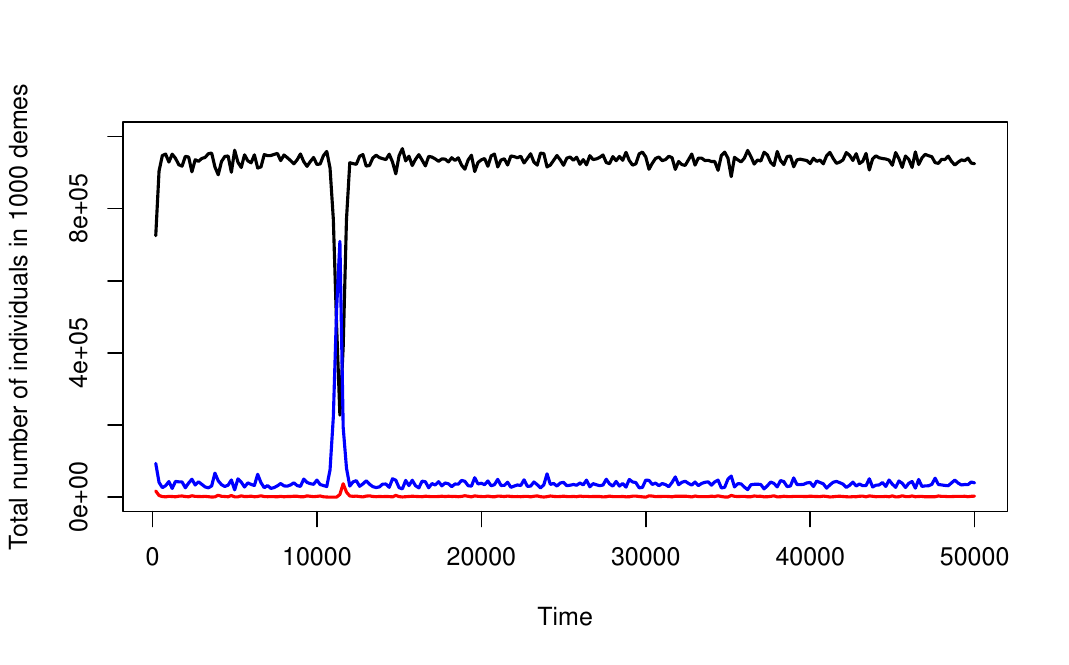}
  \includegraphics[width=8cm]{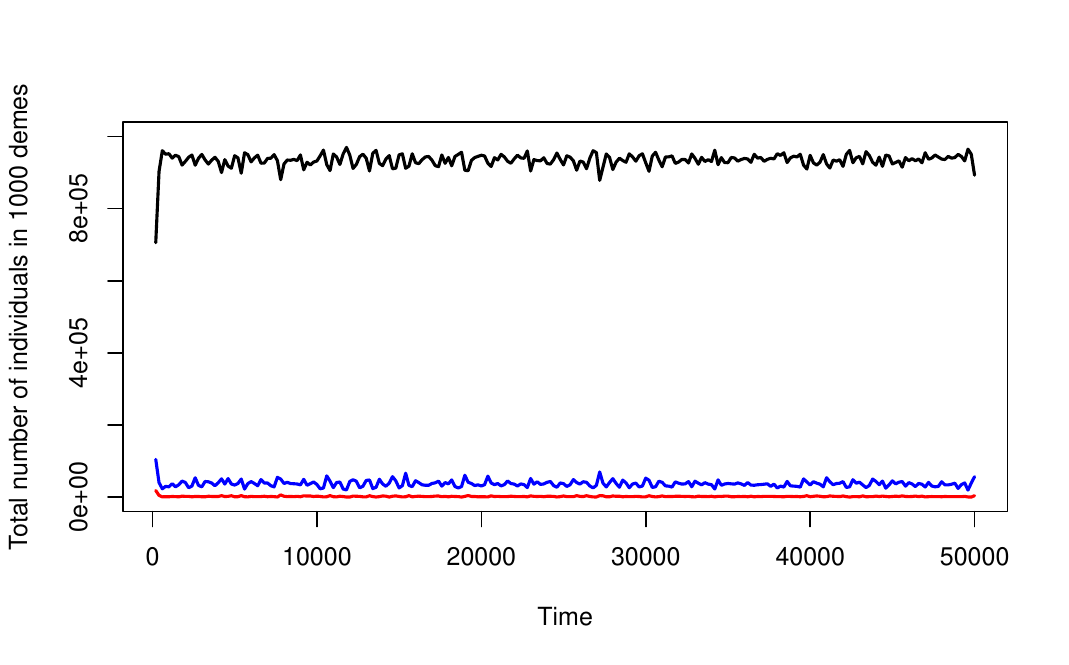}\\
  \includegraphics[width=8cm]{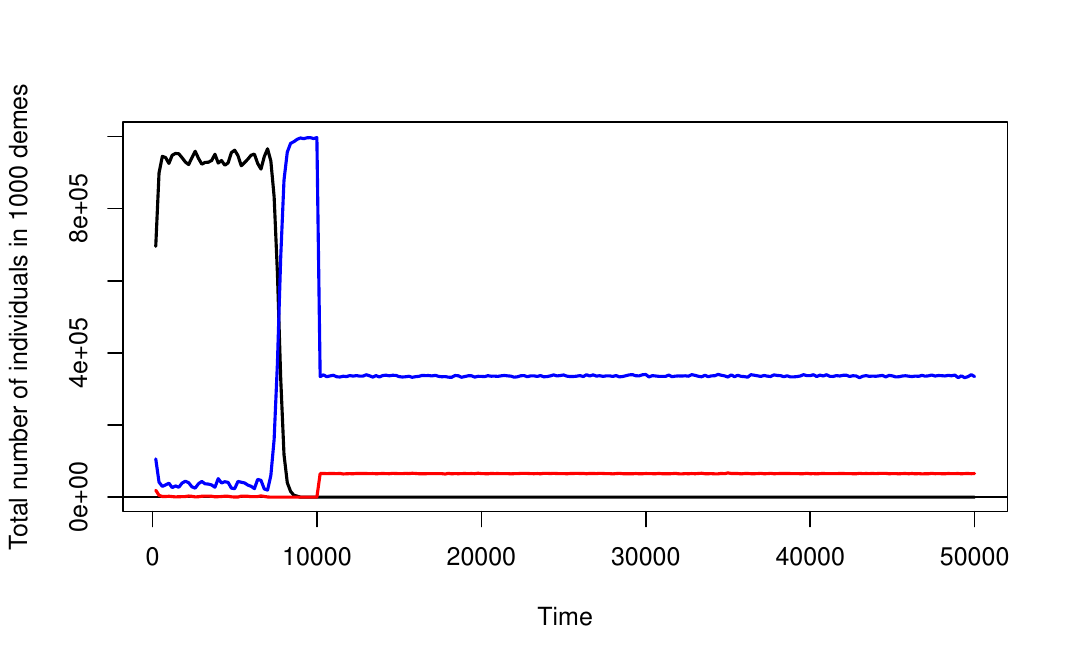}
  \includegraphics[width=8cm]{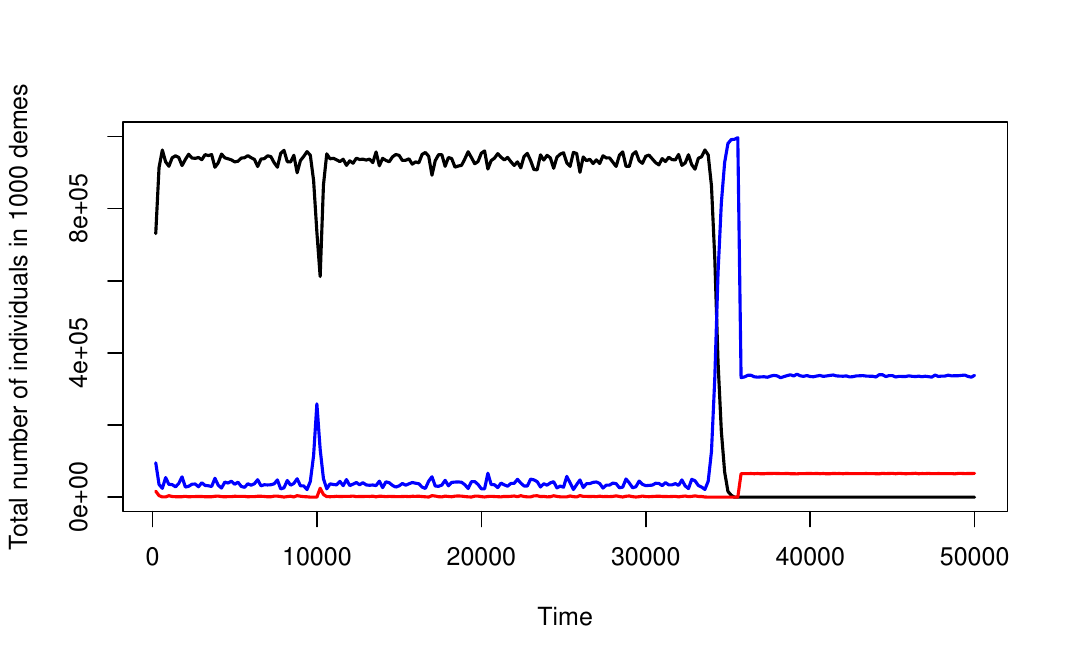}
  \caption{Results from five computer simulation runs with $\alpha=0.01$ and $\beta=0.028$.}
  \label{fig:a01b02}
\end{figure}

Figure \ref{fig:a01b02} shows results from five simulation runs with $\alpha=0.01$ and $\beta=0.028$.
In three of these runs, the defenders became way more frequent than the non-defenders,
which is in accordance with our asymptotic results as $\alpha<\beta$.
At some time points it happened, however, that the predators became rare,
which led to a decrease of the number of defenders and an increase of the
number of non-defenders.
In some cases (Fig.~\ref{fig:a01b02} top and Fig.~\ref{fig:a01b02} middle row left)
this resulted in an increase of the number of predators, which entailed that the
defenders became much more frequent again.
In two simulation runs, however, the defenders went extinct before the predators
returned (Fig.~\ref{fig:a01b02} bottom).
At the end of the simulated time span, no defenders were present in these simulation
runs and non-defenders as well as predators were present in all demes.
The final mean numbers of non-defenders per deme were 334.5 and 337.7 in the two
simulations and the mean number of predators were 65.63 and 66.04.
Note that these values are not far from the limits for the number
$\frac{K\cdot(\delta\nu+\gamma\lambda)}{\lambda\gamma+K\delta\eta}\approx 333.3$ of prey
individuals and the number
$\frac{\lambda K\eta-\lambda\nu}{\lambda\gamma+\delta K\eta}\approx 66.6$ of predators
according to the classical Lotka--Volterra model with within-species competition
(and without migration).

\begin{figure}
  \centering
  \includegraphics[width=10cm]{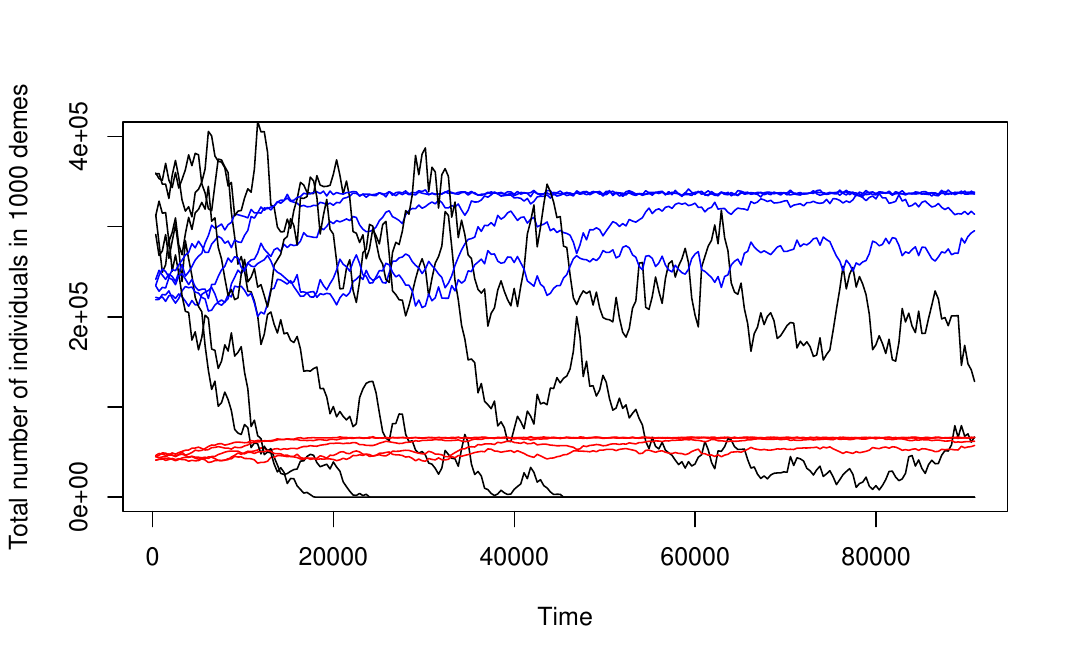}
  \caption{Total frequencies of defenders (black), non-defenders (blue) and predators (red)
    in five computer simulation runs with $\alpha=\beta=0.01$.}
  \label{fig:a01b002}
\end{figure}
Figure \ref{fig:a01b002} shows simulation results with $g_H=0.5$ and thus
$\beta=0.01=\alpha$.
In accordance with the asymptotic result for $\alpha=\beta$, the non-defenders
co-existed with the defenders over a long time span.
In three of the five simulation runs, however, the defenders
went extinct and were not able to re-immigrate and spread during the
simulated time span.

\begin{figure}
  \centering
  \includegraphics[width=10cm]{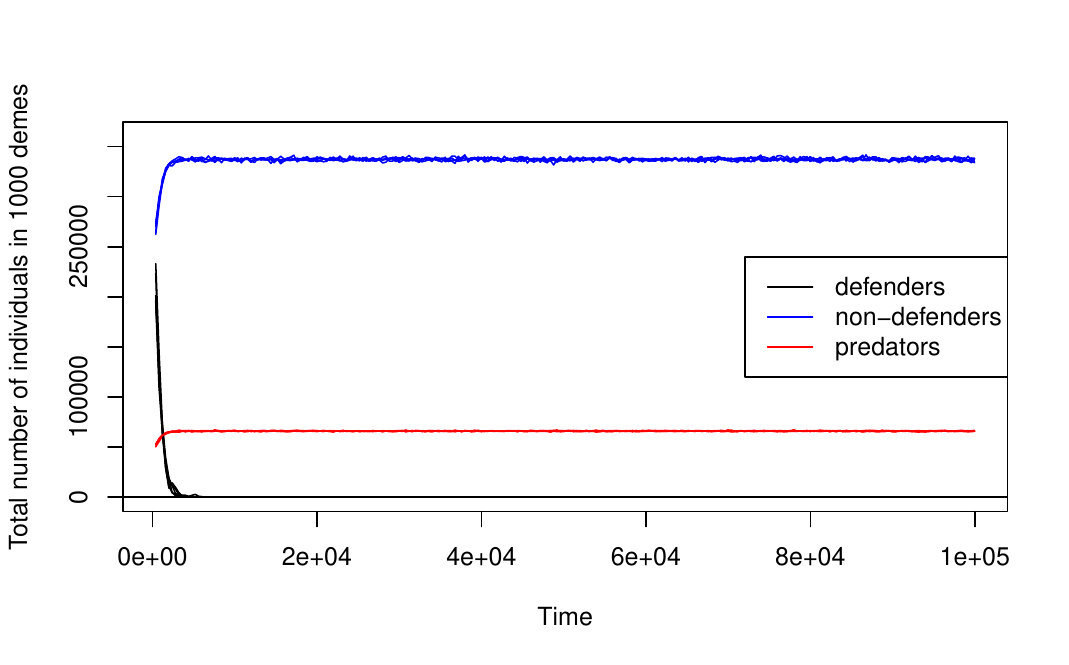}
  \caption{Five computer simulation runs with $\alpha=0.01$ and $\beta=0.008$.}
  \label{fig:a01b0}
\end{figure}
In five simulations with  $\beta=0.008$ and $\alpha=0.01$
the defenders quickly went extinct and were not able to re-immigrate and spread
in the population (Fig.~\ref{fig:a01b0}).
\begin{figure}
  \centering
  \includegraphics[width=10cm]{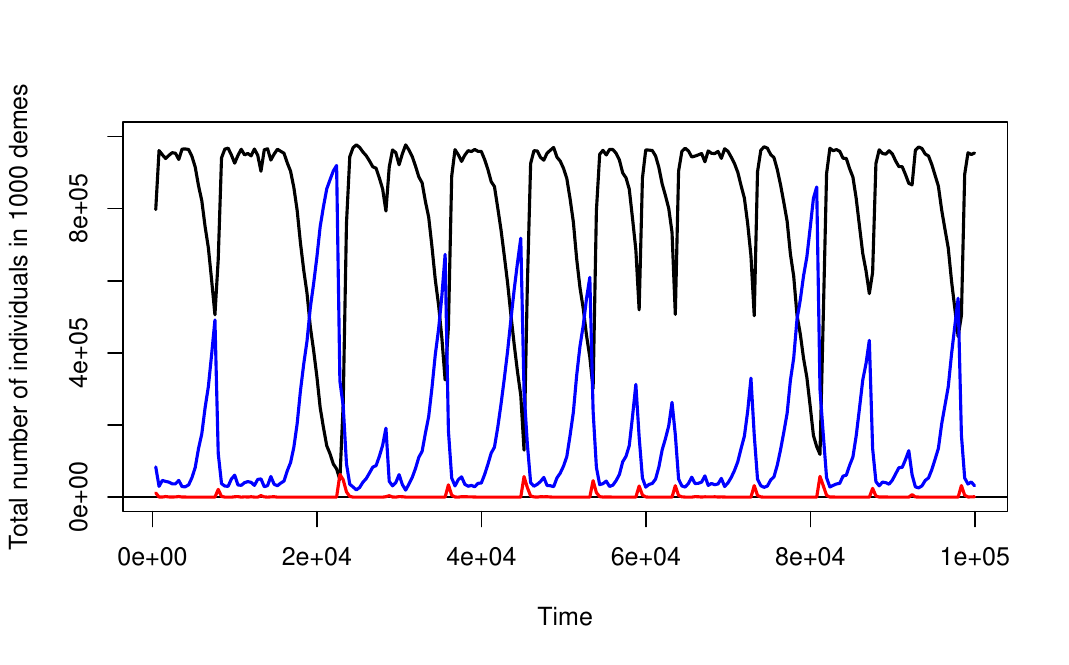}
  \caption{Total frequencies of defenders (black), non-defenders (blue) and predators (red)
    in a computer simulation with $\alpha=0.001$ and $\beta=0.008$.
    For visual clarity only one simulation run is shown but the other four gave
    similar results.
  }
  \label{fig:a001b0}
\end{figure}
This changed, however, when we reduced the cost of defense to $\alpha=0.001<\beta$
(Figure \ref{fig:a001b0}). In these simulations the defenders
became more frequent than the non-defenders and it happened many times that
the predators became rare,
which resulted in an increase in the frequencies of non-defenders and a decrease
in the frequencies of defenders.
But the latter happened more slowly than in simulations with a higher defense cost
$\alpha$, such that the defenders were still abundant when the predators
returned and were able to take over again.

To study the role of between-deme migration we launched simulation runs without it, that is,
with $\kappa_H^N=\kappa_P^N=0$.
In the simulation runs with $\alpha=0.01$ and $\beta=0.008$ or $\beta=0.01$
we observe that demes were in the end of the simulation runs either populated with
defenders and neither non-defenders nor predators or with non-defenders and
predators and no defenders.
The fraction of demes with defenders was 26.14 \% in the simulations with
$\beta=0.008$ and it was 30.3 \% in the simulations with $\beta=0.01$. The average
numbers of individuals in these demes were 992.59 and 994.15, respectively.
In the demes that were populated by non-defenders, the average final prey
population sizes were 394.58 and 385.6 per deme, and the average final predator
per-deme population sizes were 60.28 and 60.96.
Thus, averaged over the five simulation runs with $\beta=0.008$ the total number of defenders
was slightly lower than that of the non-defenders ($2.6\cdot10^5$ vs.\ $2.9\cdot10^5$), whereas
it was higher than the total number of non-defenders for $\beta=0.01$
($3\cdot10^5$ vs.\ $2.7\cdot10^5$).

\begin{figure}
  \centering
  \includegraphics[width=8cm]{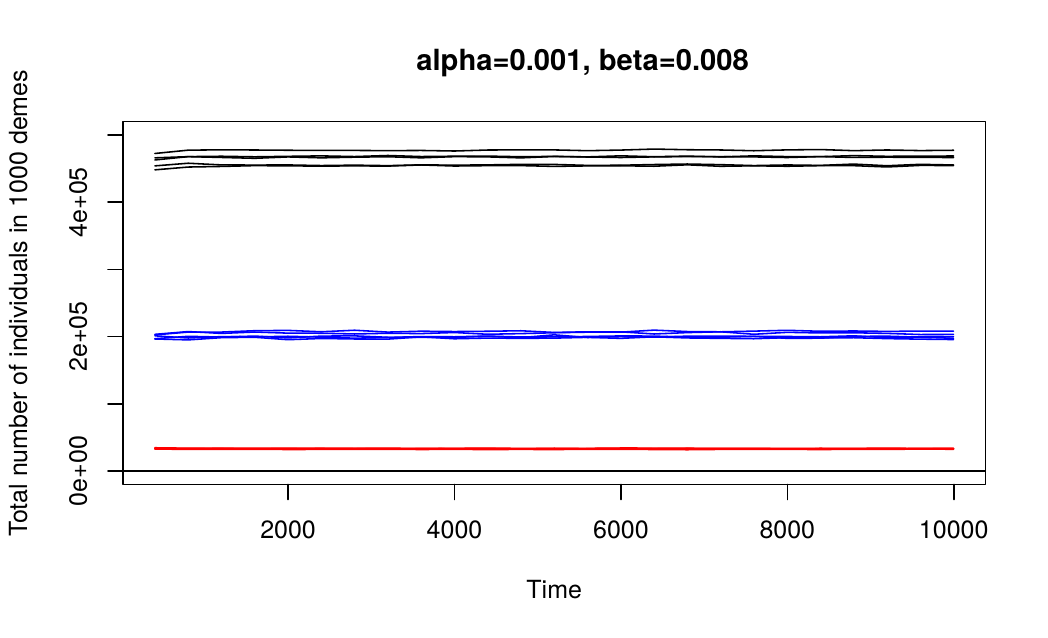}
  \includegraphics[width=8cm]{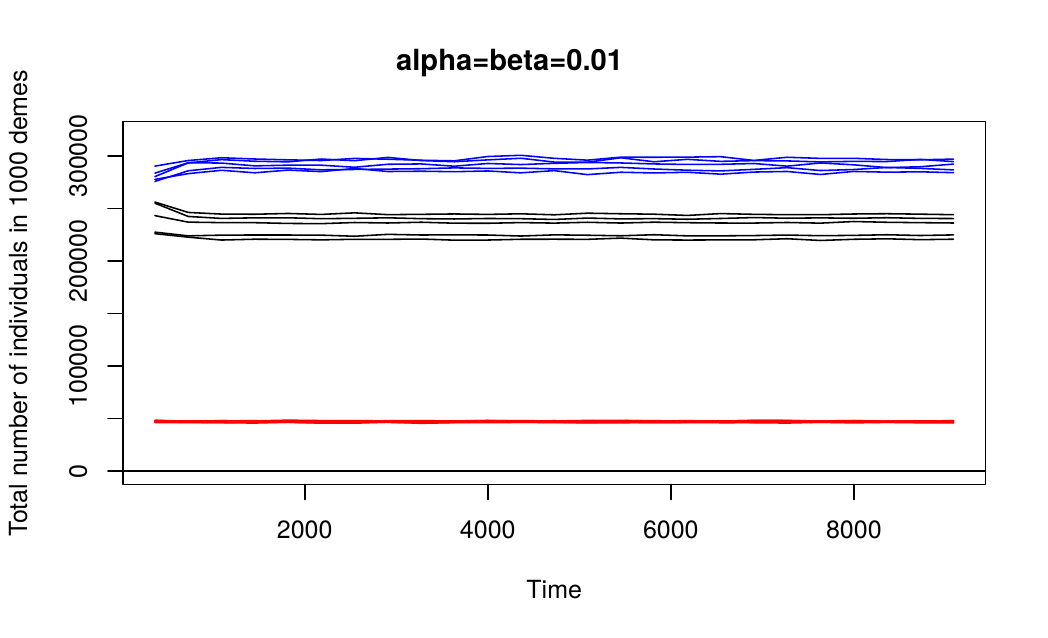}\\
  \includegraphics[width=8cm]{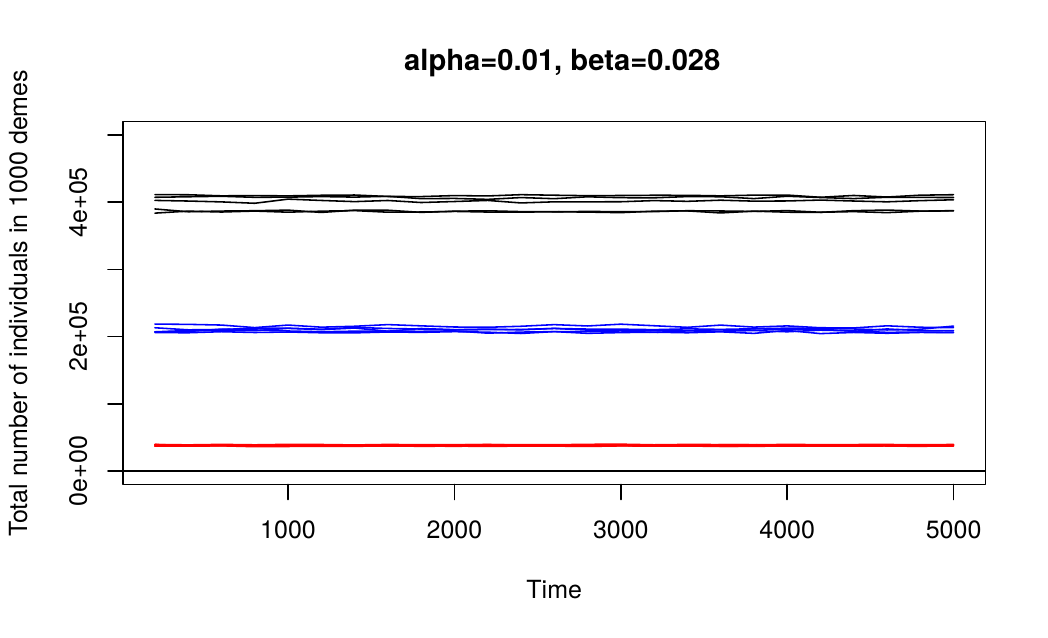}
  \includegraphics[width=8cm]{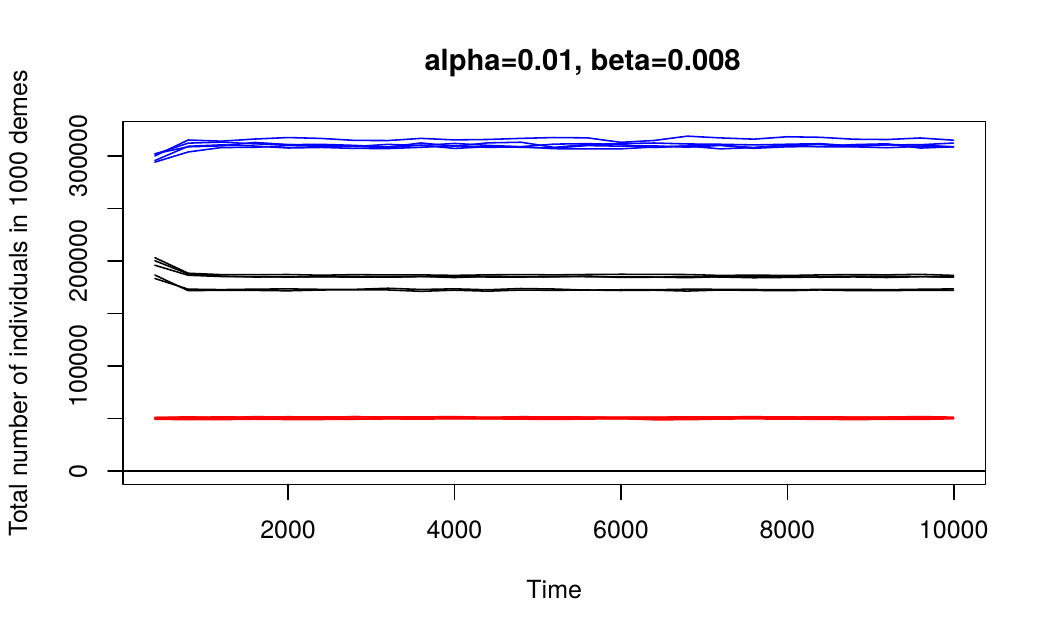}
  \caption{Total numbers of defenders (black), non-defenders (blue) and
    predators (red) in computer simulation runs with with no migration and
    all demes starting with 500 defenders, 500 non-defenders and 50 predators.}
  \label{fig:anmb}
\end{figure}
Some of the demes in which one or the other type -- defenders or non-defenders -- became
fixed may have started with very high initial frequencies in their deme.
To explore how the fixation chances of the costly defense trait are when \textit{not} starting
at high frequencies, we launched additional no-migration simulations in which all demes were
initialized with 500 defenders, 500 non-defenders and 50 predators (Fig.~\ref{fig:anmb}).
As we observed quick convergence in the previous no-migration simulations, we only simulated
5,000,000 $\tau$ leap iterations in each of the new runs.

In the simulations in which
$\beta$ was larger than the cost $\alpha$, the final total frequencies of the defenders were still
higher than those of the non-defenders (Fig.~\ref{fig:anmb}, left column).
In the cases with $\beta\le\alpha$ the final total frequencies of the non-defenders were
higher than those of the defenders, which, however, did not go extinct
(Fig.~\ref{fig:anmb}, right column).
In all those simulations there was at the end a strict dichotomy of the demes: A certain fraction
of demes was populated by defenders and neither non-defenders nor predators and all other demes
contained non-defenders and predators but no defenders.
In all cases, there were fewer demes with defenders than without defenders (Table \ref{tab:nmb}),
but in the simulations with $\alpha\le\beta$, more than 40 \% of the
demes contained defenders and the smaller numbers of demes was overcompensated by the larger
average number of inhabitants
(almost 1000 in defender demes versus 350 to 380 in non-defender demes).
\begin{table}
  \centering
  \begin{tabular}{|c|c|c|c|c|c|}
    \hline
    $\alpha$&$\beta$&\% demes with defenders&def.&non-def.&pred.\\
    \hline
    0.001&0.008&46.5&998.5&375.7&62.0\\
    0.01&0.028&40.2&993.2&351.2&64.2\\
    0.01&0.01&23.5&993.6&380.3&61.5\\
    0.01&0.008&18.1&993.9&379.6&61.8\\
    \hline
  \end{tabular}
  \caption{Final states of simulations without migration, starting with 500 defenders, 500 non-defenders and 50 predators in each deme.
    def.: average number of defenders in demes with defenders, non-def.: average
    number of non-defenders in demes without defenders, pred.: average number
  of predators in demes without defenders.}
  \label{tab:nmb}
\end{table}

The results of the no-migration simulations illustrate that larger values of $\beta$ are
beneficial for the defenders because the randon variation within a deme (``genetic drift'')
increases the probability that the defenders can become fixed despite their selective
disadvantage compared to the non-defenders.
This suggests that also in scenarios with migration between the demes random variation may not
only be beneficial for the spread of the defense trait because it leads to variation on the
deme level but also because it increases the probability that the locally less fit trait
survives in a deme until a deme-level fitness advantage emerges.

\begin{figure}
  \centering
  \includegraphics[width=10cm]{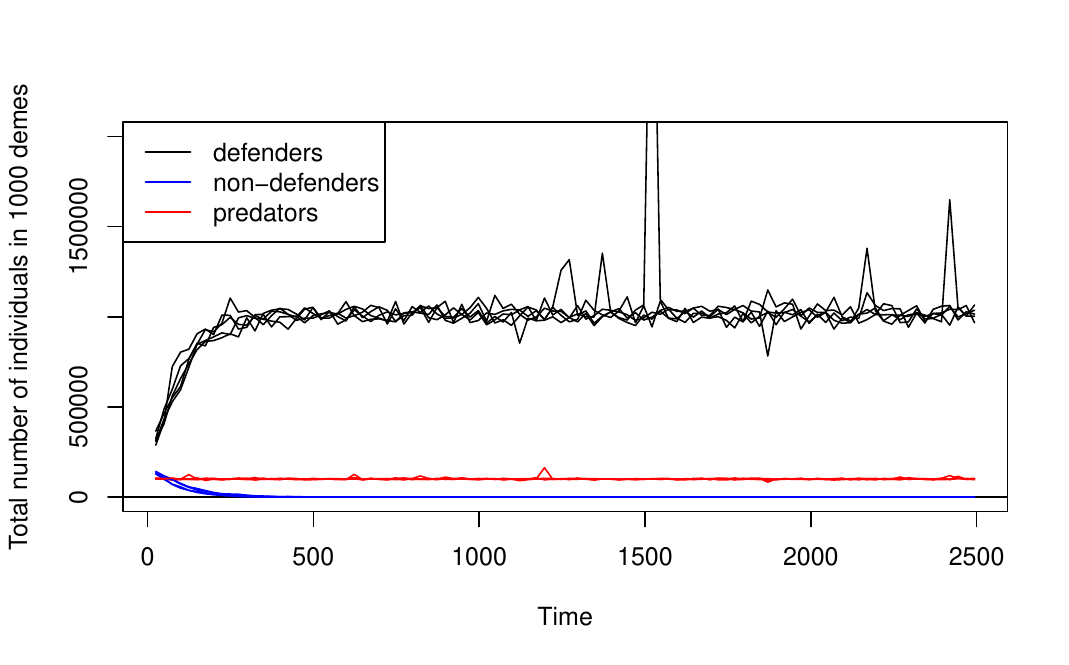}
  \caption{Five computer simulation runs with $\alpha=0.01$, $\beta=0.008$ and
    no within-species competition. (The hidden peak of the defender numbers
    in one of the simulation reaches almost 3.64 Mio.)}
  \label{fig:a01b0nK}
\end{figure}
In another series of simulations (with migration again) we removed
within-species competition from the model, that is we set $K=\infty$ and $\gamma=0$.
In these simulations all demes
were initialized with independently drawn numbers between 0 and 300 for each
sub-population.
The number of $\tau$ leap iterations was 1,000,000 with output every 10,000-th
iteration.
Even though we used $\alpha=0.01$ and $\beta=0.008$ in these simulations,
the defenders became more frequent and the non-defenders went extinct (Fig.~\ref{fig:a01b0nK}).
In the classical deterministic Lotka--Volterra model without competition (and without migration)
the frequencies of prey and predators oscillate infinitely long around
equilibrium frequencies.
Thus, a possible explanation why removing within-species competition brings an advantage for
the defenders over the non-defenders is that oscillations may be maintained
over a longer period, increasing between-deme variation of defender frequencies.
Another aspect is, however, that removing within-deme kin competition from the model
can make kin selection more effective.

In our simulations the final average population sizes per deme were
1017.8 for the prey and 100.98 for the predators.
If we for comparison consider a simple Lotka--Volterra model for a single deme
without competition and with all prey individuals being defenders,
the equilibrium frequencies are
$\frac{\nu}{\eta-\rho}=1000$ for the prey and
$\frac{\lambda-\alpha}{\delta}=99.5$
for the predators.
\subsection{Diffusion equation for frequencies of defenders}

We denote by
\begin{equation}  \begin{split}
  F_t^N(i):=\tfrac{A_t^N(i)}{A_t^N(i)+C_t^N(i)}
\end{split}     \end{equation}
the frequency
of 
defenders
in the prey population in deme $i\in\mathcal{D}$ at time $t\in[0,\infty)$.
The central goal of this article is to prove convergence of the sequence
$((F_t^N)_{t\in[0,\infty)})_{N\in\N}$ and to derive the
diffusion equation which the limit solves.
In other words, we will derive an analog of the Kimura stepping stone model
(i.e., spatially structured Wright--Fisher diffusions)
for 
costly
defense against predators.
Since we measure time in units of $N$ individuals (evolutionary time scale)
and the total migrating mass on this time scale should be finite,
%
we need to assume that $\beta_H^N$ and $\beta_P^N$ are of order $\tfrac{1}{N}$ for large $N\in\N$.
To get a nontrivial diffusion approximation we additionally assume
-- as is usual in the derivation of the Kimura stepping stone model --
slow migration and weak selection in the sense that the sequences
$(N\kappa_H^N)_{N\in\N}$, 
$(N\kappa_P^N)_{N\in\N}$
and
$(N\alpha^N)_{N\in\N}$
converge.
Thus the relative frequency of 
defenders
$\tfrac{A^N}{A^N+C^N}$ evolves on the time scale of order $N$ as $N\to\infty$.

In the special case that for some $N\in\N$ it holds that $\kappa_H^N=\kappa_P^N=\iota_H^N=\iota_P^N=\alpha^N=\beta_H^N=\beta_P^N=A_0^N=0$,
then $A^N\equiv 0$ and $(H^N(i),P^N(i))$, $i\in\D$, satisfy classical Lotka--Volterra equations.
It is well known that
if
$K\eta>\nu$,
then
the solutions of these equations converge to the nontrivial equilibrium
$(\tfrac{K\cdot(\delta\nu+\gamma\lambda)}{\lambda\gamma+K\delta\eta},\tfrac{\lambda K\eta-\lambda\nu}{\lambda\gamma+\delta K\eta})\in(0,\infty)^2$
in each deme.
Since we assume that $\kappa_H^N,\kappa_P^N,\alpha^N,\iota_H^N,\iota_P^N,\beta_H^N,\beta_P^N$ are of order $o(1)$ as $N\to\infty$
and since the 
defender
frequencies evolve slowly,
for every $i\in\mathcal{D}$,
the processes $(H^N(i),P^N(i))$ should asymptotically be close to the equilibrium of the classical Lotka--Volterra equations
with $\eta$ being replaced by $\eta-\rho F^N(i)$ as $N\to\infty$.
More precisely,
we will prove in Theorem~\ref{thm:sup.N.int.(H-h)^2.bounded} below under further assumptions that
if the local frequency of 
defenders
is $q\in[0,1]$,
then the equilibrium state for prey and predators should be
$(h_{\infty}(q),p_{\infty}(q))$
where the functions $h_\infty$ and $p_\infty$ are defined by
\begin{equation} \begin{split} \label{eq:def.h_infty.p_infty}
  [0,1]\ni x \mapsto h_\infty(x)&:=\tfrac{K(\delta\nu+\gamma\lambda)}{\lambda\gamma+\delta K(\eta-\rho x)}=\tfrac{1}{b(a-x)}\in(0,\infty)
  \\
  [0,1]\ni x \mapsto p_\infty(x)&:=\tfrac{\lambda K(\eta-\rho x)-\lambda\nu}{\lambda\gamma+\delta K(\eta-\rho x)}
  =\tfrac{\lambda}{\delta}\left(1-\tfrac{1}{Kb(a-x)}\right)\in(0,\infty).
\end{split} \end{equation}
For these functions to be well defined we will assume that $Kb(a-1)>1$ or, equivalently, that $K(\eta-\rho)>\nu$.

The above heuristic is incorrect if all populations go extinct by chance 
due to stochasticity in the offspring distributions.
To avoid this difficulty we will assume that there is sufficient immigration of prey ($2\iota_H^N\geq \beta_H^N$)
and predators ($2\iota_P^N\geq \beta_H^N$)
in order that both prey populations and predator populations cannot go extinct;
see Lemmas \ref{lem:H.positive} and \ref{lem:P.positive}, respectively.
However, note that both 
defenders
and non-defenders
can locally die out.
For our proof, which is based on the Lyapunov function \eqref{eq:def_lyapunov_function}, we additionally require
further restrictions on the parameters and on (inverse) moments of the initial configuration.
\begin{assumption} \label{ass:conv_LotkaVolterra}
  In the setting of the Section \ref{sec:model}
  it holds that
  $\lambda>\nu$, $\eta-\rho >\tfrac{\lambda}{K}$, $\gamma\geq 2\delta$,
  for all $N\in\N$ it holds that
  $\alpha^N+\kappa_H^N\leq\tfrac{\lambda}{4}$,
  $\iota_P^N\leq\tfrac{\lambda(\nu+\lambda)}{8\delta}$,
  $\kappa_P^N+\kappa_H^N+\alpha^N\leq\tfrac{\lambda-\nu}{2}$,
  $\iota_H^N\geq \tfrac{4\delta\kappa_P^N}{3(\nu+\lambda)}+\tfrac{3}{2}\beta_H^N$,
  $\iota_P^N\geq \beta_P^N$, and
  there exist
  $\sigma=(\sigma_i)_{i\in\D}\in(0,\infty)^{\D}$ and $c\in(0,\infty)$
  such that $\sum_{i\in\D}\sigma_i<\infty$,
  such that for every $j\in\D$ it holds that
  \begin{equation} \label{eq:bound.sumi.sigmai.mij}
    \sum_{i\in\D}\sigma_i m(i,j)\leq c \sigma_j,
  \end{equation}
  and such that
  $\sup_{N\in\N}\E\Big[\Big\|\Big(H_0^N+P_0^N\Big)^4
  +\tfrac{1}{(H_0^N)^2}+\tfrac{P_0^N}{(H_0^N)^2}
  +\tfrac{1}{P_0^N}+\tfrac{1}{P_0^NH_0^N}\Big\|_{\sigma}\Big]<\infty$.
\end{assumption}
    
The following theorem, which appears to be new even for non-spatial Lotka--Volterra SDEs,
implies for every $t\in[0,\infty)$ that the $L^2([0,t]\times l_{\sigma}^1\times\Omega;\R)$-distance
between $(H^N_{\cdot N},P^N_{\cdot N})$ and $(h_{\infty}(F^N_{\cdot N}),p_{\infty}(F^N_{\cdot N}))$ converges to $0$ as $N\to\infty$
at least with rate $\tfrac{1}{2}$.
Theorem~\ref{thm:sup.N.int.(H-h)^2.bounded} follows immediately from Theorem \ref{thm:bound.V} below
together with a time substitution.
For the proof of 
Theorem~\ref{thm:sup.N.int.(H-h)^2.bounded} below, we exploit uniformly bounded inverse moments
and establishing these inverse moments (see Lemmas~\ref{lem:bound.norm.1/H^2} and \ref{lem:bound.norm.1/PH} below) is somewhat technical.
\begin{theorem} \label{thm:sup.N.int.(H-h)^2.bounded}
  Assume the setting of Section \ref{sec:model},
  let Assumption \ref{ass:conv_LotkaVolterra} hold,
  let $h_\infty$ and $p_\infty$ be given by~\eqref{eq:def.h_infty.p_infty}
  and
  assume that 
  $\sup_{N\in\N}(N\max\{\kappa_H^N,\kappa_P^N,\alpha^N,\iota_H^N,\iota_P^N,\beta_H^N,\beta_P^N\})<\infty$.
  Then we get for all $t\in[0,\infty)$ that
  \begin{equation}
    \sup_{N\in\N}
    N\int_0^t
    \E\left[\sum_{i\in{\D}}\sigma_i\left(H_{uN}^N(i)-h_\infty\left(F_{uN}^N(i)\right)\right)^2
    +
    \sum_{i\in{\D}}\sigma_i\left(P_{uN}^N(i)-p_\infty\left(F_{uN}^N(i)\right)\right)^2\right]
    \,du
    <\infty.
  \end{equation}
\end{theorem}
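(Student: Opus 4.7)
The plan is to exhibit a Lyapunov function of Volterra-entropy type adapted to the frequency-dependent Lotka-Volterra equilibrium $(h_\infty(F^N), p_\infty(F^N))$, and to show that on the slow time scale $tN$ the accumulated dissipation dominates every error term. Let $\phi(x,y) := x - y - y\log(x/y) \geq 0$ denote the Bregman divergence induced by $-\log$, which vanishes to second order at $x = y$ and is comparable there to $(x-y)^2/y$. Set
\begin{equation*}
  V^N_t(i) := \phi\bigl(H^N_t(i), h_\infty(F^N_t(i))\bigr) + c\,\phi\bigl(P^N_t(i), p_\infty(F^N_t(i))\bigr), \qquad \mathbf{V}^N_t := \sum_{i\in\D}\sigma_i V^N_t(i),
\end{equation*}
where the constant $c \in (0,\infty)$ (e.g.\ $c = \delta/\eta$) is chosen so that the cross term $(H-h)(P-p)$ appearing in the time derivative either cancels or is absorbed into the two strictly negative quadratic terms by Young's inequality.

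First I would apply It\^o's formula to $\mathbf{V}^N_t$ using the SDE \eqref{eq:ACP}, and then invoke the equilibrium identities $\lambda(1-h_\infty(q)/K) = \delta p_\infty(q)$ and $(\eta-\rho q)h_\infty(q) - \gamma p_\infty(q) = \nu$ to rewrite the ecological per-capita drifts as $-\lambda(H^N - h_\infty(F^N))/K - \delta(P^N - p_\infty(F^N))$ and $(\eta-\rho F^N)(H^N - h_\infty(F^N)) - \gamma(P^N - p_\infty(F^N))$. After cross-term cancellation/absorption this produces the dissipative quantity $-\tfrac{\lambda}{K}(H^N - h_\infty(F^N))^2 - c\gamma(P^N - p_\infty(F^N))^2$. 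The remaining contributions split into four groups: (i) It\^o second-order corrections involving $\beta_H^N h_\infty(F^N)/H^N$ and $\beta_P^N p_\infty(F^N)/P^N$, which are dominated by the negative immigration terms $-\iota_H^N h_\infty(F^N)/H^N$ and $-c\iota_P^N p_\infty(F^N)/P^N$ from group (ii), thanks to $2\iota_H^N \geq \beta_H^N$ and $\iota_P^N \geq \beta_P^N$; (iii) the migration contribution, which after summing against $\sigma_i$ is dominated via hypothesis~\eqref{eq:bound.sumi.sigmai.mij} by a constant multiple of $\mathbf{V}^N_t$ itself; and (iv) the contribution from It\^o-differentiating the reference points $h_\infty(F^N), p_\infty(F^N)$, whose coefficient inherits the $O(1/N)$ smallness of the drift and diffusion of $F^N = A^N/H^N$.

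Each of these four error groups carries a prefactor of order $1/N$ (this is where the hypothesis $\sup_N N\max\{\kappa_H^N, \ldots, \beta_P^N\} < \infty$ is used), so integrating from $0$ to $tN$ accumulates only $O(t)$. Singular terms $1/H^N, 1/P^N, 1/(A^N+C^N)$ are controlled uniformly in $(N,t)$ via Lemmas~\ref{lem:H.positive}--\ref{lem:P.positive} combined with the inverse-moment hypothesis in Assumption~\ref{ass:conv_LotkaVolterra}, which propagates the initial inverse-moment bound forward in time. Assembling everything then gives, for constants $c_1, C \in (0,\infty)$ independent of $N$,
\begin{equation*}
  \E[\mathbf{V}^N_{tN}] + c_1\int_0^{tN}\E\!\left[\sum_{i\in\D}\sigma_i\bigl(H^N_s(i) - h_\infty(F^N_s(i))\bigr)^2 + \sigma_i\bigl(P^N_s(i) - p_\infty(F^N_s(i))\bigr)^2\right]\,ds \leq \E[\mathbf{V}^N_0] + Ct,
\end{equation*}
and the substitution $s = uN$ together with restriction to $\hat{\D} \subseteq \D$ then yields the claim, once $\E[\mathbf{V}^N_0]$ is bounded uniformly in $N$ via the fourth-moment and inverse-moment hypotheses on $(H_0^N, P_0^N)$.

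The main obstacle is the simultaneous and uniform control of all singular contributions, in particular the pairing of the bad It\^o correction $\beta_H^N h_\infty(F^N)/H^N$ with the good immigration term $-\iota_H^N h_\infty(F^N)/H^N$ and the analogous pairing for $P^N$: this is why the specific lower bounds on $\iota_H^N, \iota_P^N$ in Assumption~\ref{ass:conv_LotkaVolterra} are not merely technical. The additional terms produced by It\^o-differentiation of $h_\infty(F^N) = 1/(b(a - F^N(i)))$ along $F^N$ involve quadratic expressions in $h_\infty'(F^N)$, which is why the assumption $K(\eta-\rho) > \nu$ (equivalently $Kb(a-1) > 1$) is needed to keep $h_\infty$ and $p_\infty$ bounded away from their singularities for every admissible value $F^N \in [0,1]$.
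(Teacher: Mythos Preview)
Your strategy is the paper's: Theorem~\ref{thm:bound.V} proves exactly the inequality you sketch, and Theorem~\ref{thm:sup.N.int.(H-h)^2.bounded} follows by the substitution $s=uN$. Two execution points deserve correction. First, the paper's Lyapunov weight is $(\eta-\rho z)$ on the \emph{host} entropy and $\delta$ on the parasite entropy, not a constant $c$ on the parasite side. This frequency-dependent choice makes the cross term cancel \emph{identically}: the host drift contributes $-(\eta-\rho F)\delta(H-h_\infty)(P-p_\infty)$ and the parasite drift contributes $+\delta(\eta-\rho F)(H-h_\infty)(P-p_\infty)$. With a constant weight the residual cross coefficient $c(\eta-\rho F)-\delta$ varies with $F$, and absorbing it by Young's inequality imposes parameter constraints not guaranteed by Assumption~\ref{ass:conv_LotkaVolterra}. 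Second, your claim (iii) that migration is dominated by a multiple of $\mathbf V^N_t$ is false: the term $(1-h_\infty(F^N(i))/H^N(i))\sum_j m(i,j)H^N(j)$ contains $H^N(j)/H^N(i)$, which can blow up while $\phi(H^N(i),h_\infty(F^N(i)))$ stays small, since $\phi$ grows only logarithmically as $H\to 0$. The paper does not attempt a Gronwall closure in $\mathbf V^N$; instead it bounds every error term---migration, immigration, It\^o corrections, and the $\partial_F$-contributions from the moving reference points---by a single functional $R^N_t(i)$ built from $H^N$, $P^N$, $(H^N)^{-2}$, $(P^N)^{-1}$, $P^N/(H^N)^2$, $(P^NH^N)^{-1}$, and shows $\sup_{N,t}\E[\|R^N_t\|_\sigma]<\infty$ via Lemmas~\ref{lem:bound.norm.(H+P)^p}--\ref{lem:bound.norm.1/P}. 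Those three lemmas are the real work you have compressed into ``propagates the initial inverse-moment bound forward in time''; each is an It\^o/comparison argument in its own right, and the otherwise mysterious parameter conditions in Assumption~\ref{ass:conv_LotkaVolterra} (e.g.\ $\gamma\ge 2\delta$, the precise lower bound on $\iota_H^N$) are exactly what those proofs consume. Lemmas~\ref{lem:H.positive}--\ref{lem:P.positive} alone give only positivity, not the uniform inverse-moment control you need.
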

Knowing the asymptotic behavior of the prey populations, we can formally replace the $(H^N)_{N\in\N}$ in
the diffusion equation of the 
defender
frequencies (see \eqref{eq:dF} below) and, thereby, we arrive at the diffusion equation
solved by the limit  of defender frequencies.
Our main result, Theorem~\ref{thm:conv.freq.altruist}, then proves that
the 
defender
frequencies converge to the solution of the diffusion equation~\eqref{eq:X} below.
The proof of Theorem~\ref{thm:conv.freq.altruist} is
deferred to Section~\ref{sec:convF_proof} below and is based on a general stochastic averaging result
in
\citet{Kurtz1992}.
    
\begin{theorem} \label{thm:conv.freq.altruist}
  Assume the setting of Section \ref{sec:model},
  let Assumption \ref{ass:conv_LotkaVolterra} hold,
  assume
  that there exist
  $\kappa,\alpha,\beta\in[0,\infty)$
  such that
  $\lim_{N\to\infty}\kappa_H^NN=\kappa$,
  $\lim_{N\to\infty}\alpha^NN=\alpha$
  and 
  $\lim_{N\to\infty}\beta_H^NNb=\beta $,
  assume that
  \begin{equation}  \begin{split}
 \Big( \sum_{i\in\D}\sigma_i\sup_{N\in\N}\E\Big[H_0^N(i)\Big]\Big)
  +\sup_{N\in\N}(N\max\{\kappa_P^N,\iota_H^N,\iota_P^N,\beta_P^N\})<\infty
  \end{split}     \end{equation}
  and assume that $F_0^N\Longrightarrow X_0$ as $N\to\infty$ in $l_{\sigma}^1$.
  Then the SDE
  \begin{equation} \begin{split} \label{eq:X}
    dX_t(i)=&
    \kappa\sum\limits_{j\in\D}m(i,j)
    \tfrac{a-X_t(i)}{a-X_t(j)}\Big(X_t(j)-X_t(i)\Big)\,dt
    -\alpha X_t(i)(1-X_t(i))\,dt
    \\
    &
    +\sqrt{\beta(a-X_t(i))X_t(i)(1-X_t(i))}\,dW_t(i),\quad t\in(0,\infty), i\in\mathcal{D}
  \end{split} \end{equation}
  (where $\{W(i)\colon i\in\mathcal{D}\}$ are independent standard Brownian motions)
  has a unique strong solution
  and
  \begin{equation}
    \left(F_{tN}^N\right)_{t\in[0,\infty)}
    \Longrightarrow 
    \left(X_t\right)_{t\in[0,\infty)}
  \end{equation}
  as $N\to\infty$ in $C([0,\infty),l_{\sigma}^1)$.
\end{theorem}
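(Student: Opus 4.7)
The plan is to proceed in three steps: first, verify that the SDE~\eqref{eq:X} is well-posed; second, derive the pre-limit SDE for $F^N_{\cdot N}$ and establish tightness in $C([0,\infty),l^1_\sigma)$; third, identify the limit via Kurtz's averaging principle~\cite{Kurtz1992}, using Theorem~\ref{thm:sup.N.int.(H-h)^2.bounded} to replace the fast host and parasite variables by their quasi-equilibria $(h_\infty(F^N),p_\infty(F^N))$. Well-posedness of~\eqref{eq:X} is standard: since $\rho<\eta$ and $\lambda\gamma>0$ force $a>1$, the factor $a-X(j)$ stays in $[a-1,a]$, so the drift in coordinate $i$ is globally Lipschitz on $[0,1]^\D$ with respect to the $l^1_\sigma$-norm (the summability~\eqref{eq:bound.sumi.sigmai.mij} controls the spatial coupling); coordinatewise Yamada--Watanabe applied to the H\"older-$\tfrac{1}{2}$ diffusion coefficient $\sqrt{\beta(a-X(i))X(i)(1-X(i))}$ then yields pathwise uniqueness and hence strong existence.

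\textbf{The SDE for $F^N$ and tightness.} Applying It\^o's formula to $F^N(i)=A^N(i)/H^N(i)$ and using that the host immigration terms as well as the density-dependent growth and predation terms cancel between numerator and denominator, a direct computation gives
\begin{equation*}\begin{split}
dF^N_t(i)=&\tfrac{\kappa_H^N}{H^N_t(i)}\sum_{j\in\D}m(i,j)H^N_t(j)\bigl(F^N_t(j)-F^N_t(i)\bigr)\,dt-\alpha^N F^N_t(i)\bigl(1-F^N_t(i)\bigr)\,dt\\
&+\sqrt{\tfrac{\beta_H^N F^N_t(i)(1-F^N_t(i))}{H^N_t(i)}}\,d\widetilde{W}^N_t(i),
\end{split}\end{equation*}
for suitable Brownian motions $\widetilde{W}^N(i)$ built from $W^{A,N}(i)$ and $W^{C,N}(i)$. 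After the time change $t\mapsto tN$, the hypotheses $\kappa_H^NN\to\kappa$, $\alpha^NN\to\alpha$ and $\beta_H^NNb\to\beta$ directly suggest the limit~\eqref{eq:X} once $H^N$ is replaced by $h_\infty(F^N)=1/(b(a-F^N))$. For tightness of $(F^N_{\cdot N})_{N\in\N}$ in $C([0,\infty),l^1_\sigma)$, the uniform moment bounds of Assumption~\ref{ass:conv_LotkaVolterra} (in particular $\sup_N\E[\|1/H^N_0\|_\sigma]<\infty$, propagated in time by Lemma~\ref{lem:H.positive}) combined with $F^N\in[0,1]$ yield uniform control of the $L^2$-increments of each coordinate; applying an Aldous--Rebolledo criterion coordinatewise together with a tail bound using $\sum_i\sigma_i<\infty$ then gives tightness in $C([0,\infty),l^1_\sigma)$.

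\textbf{Averaging.} To identify the limit I would view $(F^N_{\cdot N},(H^N_{\cdot N},P^N_{\cdot N}))$ as a slow--fast pair and apply the occupation-measure averaging framework of Kurtz~\cite{Kurtz1992}. Theorem~\ref{thm:sup.N.int.(H-h)^2.bounded} provides the key input: together with Cauchy--Schwarz and uniform $L^q$-bounds on $1/H^N_{\cdot N}$ (coming from $2\iota_H^N\geq\beta_H^N$ and Assumption~\ref{ass:conv_LotkaVolterra}), it permits the substitution of $H^N_{\cdot N}(i)$ by $h_\infty(F^N_{\cdot N}(i))$ in both drift and diffusion coefficients of the pre-limit SDE, incurring only $L^1$-errors that vanish as $N\to\infty$. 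The ratio identity $h_\infty(x)/h_\infty(y)=(a-y)/(a-x)$ then turns the migration drift into $\kappa\sum_{j}m(i,j)\tfrac{a-X(i)}{a-X(j)}(X(j)-X(i))$, while the squared diffusion coefficient becomes $\beta(a-X(i))X(i)(1-X(i))$, matching~\eqref{eq:X}. Kurtz's theorem then shows that every subsequential weak limit of $(F^N_{\cdot N})_{N\in\N}$ solves the martingale problem associated with~\eqref{eq:X}, and the pathwise uniqueness established in the first step upgrades subsequential to full weak convergence in $C([0,\infty),l^1_\sigma)$.

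\textbf{Main obstacle.} The most delicate point is the migration drift $\tfrac{\kappa_H^N N}{H^N_{tN}(i)}\sum_{j}m(i,j)H^N_{tN}(j)(F^N_{tN}(j)-F^N_{tN}(i))$, where the fast host variables appear both multiplicatively and as an inverse. Replacing them by their Lotka--Volterra quasi-equilibria requires combining the $L^2$-averaging of Theorem~\ref{thm:sup.N.int.(H-h)^2.bounded} with uniform $L^q$-control of $1/H^N$, \emph{and} ensuring that the error bounds sum against $\sigma$ in space so that Kurtz's infinite-dimensional averaging framework applies with state space $l^1_\sigma$ rather than only coordinatewise. Propagating the single-coordinate estimates through the spatial $l^1_\sigma$-structure without losing the averaging rate is the principal technical hurdle.
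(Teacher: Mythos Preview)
Your proposal is correct and follows essentially the same route as the paper: both derive the SDE for $F^N$, apply Kurtz's averaging theorem with Theorem~\ref{thm:sup.N.int.(H-h)^2.bounded} supplying the $L^2$-convergence of $H^N_{\cdot N}$ to $h_\infty(F^N_{\cdot N})$, identify any subsequential limit via the martingale problem for~\eqref{eq:X}, and conclude by well-posedness. The paper circumvents your ``main obstacle'' by restricting to test functions depending on finitely many coordinates (dense in $C_b([0,1]^\D)$ by Stone--Weierstrass), so the spatial summability reduces to finitely many demes and the uniform moment bounds of Lemmas~\ref{lem:bound.norm.(H+P)^p} and~\ref{lem:bound.norm.1/H^2}; note also that Lemma~\ref{lem:H.positive} gives only positivity, not the inverse moments you need, which come from Lemma~\ref{lem:bound.norm.1/H^2}.
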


\subsection{Many-demes limit}
An important problem is to derive conditions under which 
defenders
persist,
that is, to derive conditions on the parameters of the SDE \eqref{eq:X} under which the process goes to fixation.
Here we simplify this problem and consider the many-demes-limit (also denoted as mean-field approximation) of the SDE \eqref{eq:X}.
More precisely, for every $D\in\N$, let $X^D\colon [0,\infty) \times  $\{1,\ldots,D\}$ \times \Omega \to [0,1]$ be the solution of the SDE \eqref{eq:X}
with  $\D$ replaced by $\{1,\ldots,D\}$
and with $m$ replaced by $(\tfrac{1}{D})_{i,j\in\{1,\ldots,D\}}$.
We will show in Proposition \ref{prop:MVL} together with Lemma \ref{lem:mvl_application} below that
if, for every $D\in\N$,
$(X_0^D(i))_{i\in\{1,\ldots,D\}}$ are exchangeable $[0,1]$-valued random variables, if
$\sup_{D\in\N}\E[(X_0^D(1))^2]<\infty$,
if $Z\colon[0,\infty)\times\Omega\to[0,1]$ is the solution of the SDE~\eqref{eq:dZ} below with respect to the
Brownian motion $W(1)$
and if $\sup_{D\in\N}\sqrt{D}\E\left[\left|X_0^D(i)- Z_0(i)\right|\right]<\infty$,
then for all $t\in[0,\infty)$ it holds that
\begin{equation}
  \sup_{D\in\N}\sqrt{D}\E\left[| X_t^D(1)-Z_t|\right] < \infty.
\end{equation}
Thus the solution of the SDE~\eqref{eq:dZ} is the many-demes limit of the SDE~\eqref{eq:X}.
For this many-demes limit we derive a simple necessary and sufficient condition ($\alpha<\beta$)
under which
the 
costly
defense trait goes to fixation when starting with a positive frequency.
The proof of Theorem~\ref{thm:longtermbehavior} is deferred to Section~\ref{sec:longtermbehavior_proof}.
\begin{theorem} \label{thm:longtermbehavior}
  Let $\alpha,\beta,\kappa \in (0,\infty)$, let $a\in (1,\infty)$,
  let $(\Omega, \mathcal{F}, \P, (\mathcal{F}_t)_{t\in[0,\infty)})$ be a filtered probability space,
  let $W\colon [0,\infty) \times \Omega \to \R$
  be a standard $(\mathcal{F}_t)_{t\in[0,\infty)}$ Brownian motion with continuous sample paths,
  and let $Z_0\colon \Omega \to [0,1]$ be an $\mathcal{F}_0$/$\mathcal{B}([0,1])$-measurable mapping.
  Then the SDE
  \begin{equation}  \begin{split} \label{eq:dZ}
    dZ_t
    &=
    \kappa (a-Z_t)\left((a-Z_t)\E\left[\tfrac{1}{a-Z_t}\right]-1\right)\,dt-\alpha Z_t\left(1-Z_t\right)\,dt
    +\sqrt{\beta(a-Z_t)Z_t(1-Z_t)}\,dW_t
  \end{split} \end{equation}
  has a unique solution.
  Furthermore, if $\E[Z_0]=1$, then $\P[Z_t=1\textup{ for all }t\in[0,\infty)]=1$,
  if $\E[Z_0]=0$, then $\P[Z_t=0\textup{ for all }t\in[0,\infty)]=1$
  and if $\E[Z_0]\in(0,1)$, then 
  \begin{equation} \begin{split}
    \lim_{t\to\infty}\E\big[\big|Z_t-0\big|\big]=0, &\text{ if } \alpha >\beta ,\\
    \lim_{t\to\infty}\E\big[\big|Z_t-1\big|\big]=0, &\text{ if } \alpha <\beta ,\\
    Z_t\stackrel{t \rightarrow \infty} {\Longrightarrow} \int_{\cdot}m(z)\,dz, &\text{ if } \alpha =\beta,
  \end{split} \end{equation}
  where
  $m(z)
  =\tfrac{1}{c}z^{\frac{2\kappa }{\beta }(a\theta-1)-1}
  (1-z)^{\frac{2\kappa }{\beta }(1-\theta(a-1))-1}
  (a-z)^{\frac{2\alpha }{\beta }-1}$
  for $z\in(0,1)$,
  where $c\in(0,\infty)$ is a normalizing constant
  and where $\theta=\E[\tfrac{1}{a-Z_0}]$.
\end{theorem}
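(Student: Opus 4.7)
The plan is to exploit the fact that the McKean-Vlasov drift in~\eqref{eq:dZ} is tailor-made so that the scalar functional $\theta(t):=\E[\tfrac{1}{a-Z_t}]$ obeys a particularly clean closed evolution equation, from which the three long-term regimes follow by a LaSalle-type analysis of the flow of laws.

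For existence and uniqueness I would treat $\theta$ as a deterministic input: for any continuous $\theta\colon[0,\infty)\to[\tfrac{1}{a},\tfrac{1}{a-1}]$ the resulting SDE is classical with Lipschitz drift on $[0,1]$ and $\tfrac{1}{2}$-H\"older diffusion, so Yamada--Watanabe gives pathwise uniqueness, and the signs of the drift at $z=0,1$ keep the solution in $[0,1]$. Since $z\mapsto\tfrac{1}{a-z}$ is bounded Lipschitz on $[0,1]$, the map $\theta\mapsto\E[\tfrac{1}{a-Z^{\theta}_t}]$ is locally in time a contraction, so Picard iteration produces the unique self-consistent $\theta$. The degenerate cases $\E[Z_0]\in\{0,1\}$ force $Z_0$ to be deterministic at an endpoint where drift and diffusion of~\eqref{eq:dZ} both vanish, and pathwise uniqueness yields $Z\equiv Z_0$.

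The key step is to apply It\^o's formula to $f(z)=\tfrac{1}{a-z}$: the algebraic identity $\tfrac{1}{(a-z)^2}\kappa(a-z)((a-z)\theta(t)-1)=\kappa\theta(t)-\tfrac{\kappa}{a-z}$ causes the McKean-Vlasov term to telescope with $\kappa\theta(t)$ upon taking expectations, leaving
\begin{equation}
\theta'(t)=(\beta-\alpha)\,\E\!\left[\tfrac{Z_t(1-Z_t)}{(a-Z_t)^2}\right].
\end{equation}
Consequently $\theta$ is monotone on $[\tfrac{1}{a},\tfrac{1}{a-1}]$ and converges to some $\theta_\infty$. In the critical case $\alpha=\beta$, $\theta\equiv\theta_0$ reduces~\eqref{eq:dZ} to a classical one-dimensional diffusion; a partial-fractions computation of $\int\tfrac{2b(y)}{\sigma^2(y)}\,dy$ gives exactly the announced density $m(z)$, and $\E[Z_0]\in(0,1)$ places $\theta_0\in(\tfrac{1}{a},\tfrac{1}{a-1})$, making both endpoint exponents strictly greater than $-1$ and the boundaries $\{0,1\}$ entrance, so standard ergodic theory delivers $Z_t\Rightarrow m(z)\,dz$; a Beta-integral calculation confirms the self-consistency $\int\tfrac{m(z)}{a-z}\,dz=\theta_0$.

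For $\alpha\neq\beta$ I would run a LaSalle argument on the flow $\mu_t:=\Law{Z_t}$ in $\mathcal{P}([0,1])$ with $\theta$ as Lyapunov functional. Any $\mu$ in the $\omega$-limit set of $(\mu_t)$ satisfies $\int\tfrac{1}{a-z}\,d\mu=\theta_\infty$ and lies in an invariant subset of $\{\mu':\int\tfrac{z(1-z)}{(a-z)^2}\,d\mu'=0\}=\{p\delta_0+(1-p)\delta_1:p\in[0,1]\}$; invariance then forces the drift $F(\mu):=\kappa(\theta_\infty\int(a-z)^2\,d\mu-\int(a-z)\,d\mu)-\alpha\int z(1-z)\,d\mu$ of $t\mapsto\int z\,d\mu_t$ to vanish, and a short algebraic reduction shows this holds only for $p\in\{0,1\}$ (the relevant factor simplifies to $(2a-1)/[a(a-1)]$, strictly positive for $a>1$). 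For $\alpha>\beta$, monotonicity $\theta(t)\downarrow\theta_\infty<\tfrac{1}{a-1}$ excludes $\mu=\delta_1$, so $\mu_t\Rightarrow\delta_0$ and $\E[Z_t]\to 0$; the case $\alpha<\beta$ is symmetric. The main technical obstacle is justifying LaSalle invariance of the $\omega$-limit set for a McKean-Vlasov flow, which reduces to continuity of the nonlinear semigroup with respect to the initial law---standard since~\eqref{eq:dZ} has coefficients Lipschitz in $z$ and weakly continuous in $\theta=\int\tfrac{1}{a-z}d\mu$---together with the observation that $\theta(t)$ and $\E[Z_t]$ both converge because every subsequential weak limit of $\mu_t$ is of the form $p\delta_0+(1-p)\delta_1$ with $p$ pinned down by $\theta_\infty$.
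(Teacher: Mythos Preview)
Your proposal is correct, and for the case $\alpha=\beta$ it coincides with the paper's argument (freeze $\theta\equiv\theta_0$, identify the speed density, invoke one-dimensional ergodicity). Both proofs also begin with the same It\^o computation yielding $\theta'(t)=(\beta-\alpha)\E\big[\tfrac{Z_t(1-Z_t)}{(a-Z_t)^2}\big]$ and hence monotonicity of $\theta$.

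For $\alpha\neq\beta$, however, the routes diverge. The paper argues by contradiction: assuming $\theta_\infty\in(\tfrac1a,\tfrac1{a-1})$, it couples $Z$ after a large time with the \emph{linear} diffusion $Z^{\theta_\infty+\varepsilon}$ obtained by freezing the mean-field parameter, uses ergodicity of the latter to pass to its stationary law $\Psi_{\theta_\infty+\varepsilon}$, lets $\varepsilon\downarrow 0$, and then invokes a separate Beta-integral lemma showing $\int_0^1\tfrac{1}{a-z}\,\Psi_\theta(dz)\lessgtr\theta$ according as $\alpha\gtrless\beta$; this closes the contradiction. Your LaSalle argument bypasses both the coupling and the Beta lemma: the $\omega$-limit set of the nonlinear flow is forward-invariant, lies on the level set $\{\theta=\theta_\infty\}$, hence (by $\theta'\equiv 0$ along invariant trajectories) is supported on $\{0,1\}$, and the short two-point-mass drift computation $\tfrac{d}{dt}\E[Z_t]\big|_{\mu=p\delta_0+(1-p)\delta_1}=\kappa p(1-p)\tfrac{2a-1}{a(a-1)}$ forces $p\in\{0,1\}$. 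Monotonicity of $\theta$ then selects the correct endpoint. The trade-off is that the paper's argument is entirely explicit and needs no abstract dynamical-systems input, whereas yours is computationally lighter but relies on continuity of the nonlinear semigroup $\mu\mapsto S_t\mu$ in the Wasserstein topology---which, as you correctly observe, follows from the same Yamada--Watanabe $L^1$-stability estimate already used for well-posedness. One small caveat: the exponent condition you cite guarantees integrability of the speed density but not literally that $\{0,1\}$ are \emph{entrance} boundaries (they can be regular for small $\tfrac{2\kappa}{\beta}(a\theta_0-1)$); however finiteness of the speed measure together with pathwise uniqueness is what is actually needed for convergence to the stationary law, so this does not affect your conclusion.
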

Informally speaking, Theorem~\ref{thm:longtermbehavior} asserts that
a costly
defense allele persists in a space of infinitely many demes
if $\alpha<\beta$ and if the mean frequency of 
defenders
over all demes is positive.
This does not imply that a new mutation resulting in 
costly
defense behavior
can establish itself on one island or even in the total population.
Our final result partially closes this gap and considers
a process which could be the limit $\lim_{D\to\infty}\sum_{i=1}^DX^D(i)$ if 
for all $D\in\N$ and $i\in\{1,\ldots,D\}$ it holds that
$X_0^D(i)=Y_0\1_{i=1}$ for some $[0,1]$-valued random variable
\citep{Hutzenthaler2009,Hutzenthaler2012}.
For this limiting process, Proposition~\ref{prop:invasion} below shows in the case $\P[Y_0>0]=1$
that the process converges to $0$
in probability
as time goes to infinity  if and only if $\alpha \geq \beta$.
Informally speaking, Proposition~\ref{prop:invasion} asserts that
a costly
defense allele has a positive invasion probability in an infinite-dimensional space
if and only if $\alpha <\beta$.

\section{Convergence of the relative frequency of
defenders
}\label{sec:conv_altr_freq}
The main result of this section, Theorem~\ref{thm:bound.V} below, proves strong convergence
of spatial stochastic Lotka--Volterra processes.
Its proof uses a Lyapunov type argument.
This argument uses uniformly bounded inverse moments which we establish
in Section~\ref{sec:convergence.LV}.
In section~\ref{sec:convF_proof},
we then prove convergence of relative frequencies as stated in Theorem \ref{thm:conv.freq.altruist}.
\subsection{Setting} \label{sec:convF_setting}
  Assume the setting of Section \ref{sec:model},
Define $\bar{\kappa}_H:=\sup_{N\in\N}\kappa_H^N$,
$\bar{\kappa}_P:=\sup_{N\in\N}\kappa_P^N$,
$\betab_H:=\sup_{N\in\N}\beta_H^N$,
$\betab_P:=\sup_{N\in\N}\beta_P^N$,
$\bar{\iota}_H:=\sup_{N\in\N}\iota_H^N$,
and
$\bar{\iota}_P:=\sup_{N\in\N}\iota_P^N$.
For all $z=(z_i)_{i\in\D}\in(0,\infty)^{\D}$ and $p\in\R$ let $z^p=\left(z_i^p\right)_{i\in\D}$.
Furthermore, let $\underline{1}:=(1)_{i\in\D}\in l_\sigma^1$.
Define $E_1:=[0,1]^\D$ and 
$E_2:=l_\sigma^1\cap[0,\infty)^\D$.
For all $i\in\D$ and all $N\in\N$
let $W^{H,N}(i)\colon [0,\infty) \times \Omega \to \R$
and $W^{F,N}(i)\colon [0,\infty) \times \Omega \to \R$
be stochastic processes with continuous sample paths such that
for every $t\in[0,\infty)$ it holds $\P$-a.s.~that
\begin{equation}
  dW_t^{H,N}(i)=
  \tfrac{\sqrt{A_t^N(i)}\,dW_t^{A,N}(i)
    +\sqrt{C_t^N(i)}\,dW_t^{C,N}(i)}
  {\sqrt{H_t^N(i)}}
\end{equation}
and
\begin{equation}
  dW_t^{F,N}(i)=
  \tfrac{\sqrt{C_t^N(i)}\,dW_t^{A,N}(i)
    -\sqrt{A_t^N(i)}\,dW_t^{C,N}(i)}
  {\sqrt{H_t^N(i)}},
\end{equation}
respectively,
with $W_0^{H,N}(i)=W_0^{F,N}(i)=0$.

\subsection{Preliminaries}
Assume the setting of Section \ref{sec:convF_setting}.
In this section we collect some first results that are used
in the proofs of the statements in subsequent sections.
\begin{lemma} \label{lem:HFP}
Assume the setting of Section \ref{sec:convF_setting}.
Then $W^{H,N}(i)$ and $W^{F,N}(i)$, $N\in\N$, $i\in\D$, are independent Brownian motions and
for all $t\in[0,\infty)$, all $i\in\D$, and all $N\in\N$ it $\P$-a.s.~holds that 
\begin{align}
  \label{eq:dH}
  H_t^N(i)
  &=
  H_0^N(i)
  +\int_0^t
  \kappa_H^N\sum_{j\in\D} m(i,j)H_s^N(j)
  +(\lambda-\kappa_H^N-\alpha^NF_s^N(i))H_s^N(i)
  -\tfrac{\lambda}{K}\left(H_s^N(i)\right)^2
  \\
  &\quad
  -\delta P_s^N(i)H_s^N(i)+\iota_H^N\,ds+\int_0^t\sqrt{\beta_H^N H_s^N(i)}\,dW_s^{H,N}(i),
  \notag\\
  \label{eq:dF}
  F_t^N(i)
  &=
  F_0^N(i)+
  \int_0^t
  \kappa_H^N
  \sum\limits_{j\in\D}m(i,j)\left(F_s^N(j)-F_s^N(i)\right)\tfrac{H_s^N(j)}{H_s^N(i)}
  -\alpha^NF_s^N(i)\left(1-F_s^N(i)\right)
  \,ds
  \\
  &\quad
  +\int_0^t\sqrt{\tfrac{\beta_H^N F_s^N(i)\left(1-F_s^N(i)\right)}{H_s^N(i)}}dW_s^{F,N}(i), 
  \notag\\
  \label{eq:dP}
  P_t^N(i)
  &=
  P_0^N(i)
  +\int_0^t\kappa_P^N\sum_{j\in\D} m(i,j)P_s^N(j)
  -(\kappa_P^N+\nu)P_s^N(i)
  -\gamma \left(P_s^N(i)\right)^2
  +\left(\eta-\rho F_s^N(i)\right)P_s^N(i)H_s^N(i)
  \\
  &\quad
  +\iota_P^N\,ds+\int_0^t\sqrt{\beta_P^N P_s^N(i)}\,dW_s^{P,N}(i).
  \notag
\end{align}
\begin{proof}
For all $t\in[0,\infty)$, all $N\in\N$, and all $i\in\D$ we get
$\left<W^{H,N}(i)\right>_t=\left<W^{F,N}(i)\right>_t=t$
as well as
\begin{equation}
  \left<W^{H,N}(i),W^{F,N}(i)\right>_t
  =\int_0^t\tfrac{\sqrt{A_s^N(i)C_s^N(i)}-\sqrt{A_s^N(i)C_s^N(i)}}{H_s^N(i)}\,ds
  =0.
\end{equation}
Hence, we see that $W^{H,N}(i)$ and $W^{F,N}(i)$,
$N\in\N$, $i\in\D$, are independent Brownian motions.
Equation \eqref{eq:dH} follows from It\^o's lemma
\citep[e.g.][]{Klenke2008}
and rearranging terms.
Furthermore, applying It\^o's lemma we see for all $t\in[0,\infty)$, all $i\in\D$, and all $N\in\N$
that $\P$-a.s.~it holds that
\begin{equation} \begin{split}
  F_t^N(i)
  =&
  F_0^N(i)
  +\int_0^t\tfrac{C_s^N(i)}{\left(H_s^N(i)\right)^2}\Bigg(\kappa_H^N \sum\limits_{j\in\D}m(i,j)\left(A_s^N(j)-A_s^N(i)\right)
  +A_s^N(i)\big(\lambda\left(1-\tfrac{H_s^N(i)}{K}\right)-\delta P_s^N(i)-\alpha^N\big)
  \\
  &
  +\iota_H^N\tfrac{A_s^N(i)}{H_s^N(i)}\Bigg)\,ds
  +\int_0^t\tfrac{C_s^N(i)}{\left(H_s^N(i)\right)^2}\sqrt{\beta_H^N A_s^N(i)}\,dW^{A}_s(i)
  \\
  &
  -\int_0^t\tfrac{A_s^N(i)}{\left(H_s^N(i)\right)^2}
  \Bigg(\kappa_H^N \sum\limits_{j\in\D}m(i,j)\left(C_s^N(j)-C_s^N(i)\right)
  +C_s^N(i)\left(\lambda\left(1-\tfrac{H_s^N(i)}{K}\right)
  -\delta P_s^N(i)\right)
  \\
  &
  +\iota_H^N\tfrac{C_s^N(i)}{H_s^N(i)}\Bigg)\,ds
  -\int_0^t\tfrac{A_s^N(i)}{\left(H_s^N(i)\right)^2}\sqrt{\beta_H^N C_s^N(i)}\,dW^{C}_s(i)
  -\int_0^t\tfrac{C_s^N(i)}{\left(H_s^N(i)\right)^3}\beta_H^N A_s^N(i)
  +\tfrac{A_s^N(i)}{\left(H_s^N(i)\right)^3}\beta_H^N C_s^N(i)\,ds
  \\
  =&
  F_0^N(i)
  +\int_0^t
  \tfrac{\kappa_H^N}{H_s^N(i)}
  \sum\limits_{j\in\D}m(i,j)\left(\left(1-F_s^N(i)\right)F_s^N(j)H_s^N(j)-F_s^N(i)\left(1-F_s^N(j)\right)H_s^N(j)\right)
  \\
  &\quad
  -\alpha^NF_s^N(i)\left(1-F_s^N(i)\right)\,ds
  +\int_0^t\sqrt{\tfrac{\beta_H^N F_s^N(i)\left(1-F_s^N(i)\right)}{H_s^N(i)}}dW_s^{F,N}(i)
\end{split} \end{equation}
and \eqref{eq:dF} follows.
Finally, we obtain \eqref{eq:dP} from the definition of $\left(H^N\right)_{N\in\N}$ and $\left(F^N\right)_{N\in\N}$.
\end{proof}
\end{lemma}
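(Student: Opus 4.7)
The plan is to identify $W^{H,N}(i)$ and $W^{F,N}(i)$ as independent Brownian motions via L\'evy's characterization, and then to derive each of the three SDEs by direct It\^o calculus from \eqref{eq:ACP}. Both $W^{H,N}(i)$ and $W^{F,N}(i)$ are continuous local martingales, being stochastic integrals against the independent Brownian motions $W^{A,N}(i)$ and $W^{C,N}(i)$. Using $A^N(i)+C^N(i)=H^N(i)$, their quadratic variations evaluate to $t$, while the cross variation
\[
\left\langle W^{H,N}(i), W^{F,N}(i)\right\rangle_t = \int_0^t \frac{\sqrt{A_s^N(i) C_s^N(i)} - \sqrt{A_s^N(i) C_s^N(i)}}{H_s^N(i)}\,ds = 0
\]
vanishes by cancellation of the two mixed terms. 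Cross variations across distinct pairs $(N,i)$ vanish because $W^{A,N}(i)$ and $W^{C,N}(i)$ are independent across $N$ and $i$, so L\'evy's theorem yields joint independence and the Brownian property.

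For \eqref{eq:dH} I would simply add the first two lines of \eqref{eq:ACP}: the migration and logistic drifts combine directly, the $-\alpha^N A^N(i)$ contribution becomes $-\alpha^N F^N(i) H^N(i)$, and the diffusion terms merge via $\sqrt{\beta_H^N A^N(i)}\,dW^{A,N}(i) + \sqrt{\beta_H^N C^N(i)}\,dW^{C,N}(i) = \sqrt{\beta_H^N H^N(i)}\,dW^{H,N}(i)$ by the defining identity of $W^{H,N}(i)$. Equation \eqref{eq:dP} is obtained from the third line of \eqref{eq:ACP} by splitting the migration drift as $\kappa_P^N \sum_j m(i,j) P_s^N(j) - \kappa_P^N P_s^N(i)$ and expanding the bracket around $P_s^N(i)$.

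The substantive computation is \eqref{eq:dF}: apply It\^o's formula to the map $(a,c)\mapsto a/(a+c)$ at $(A^N(i), C^N(i))$. The first-order contribution $\frac{C^N(i)}{(H^N(i))^2}\,dA^N(i) - \frac{A^N(i)}{(H^N(i))^2}\,dC^N(i)$ produces drift pieces in which the common factors $\lambda(1 - H^N(i)/K) - \delta P^N(i)$ and $\iota_H^N/H^N(i)$ cancel between the $A^N$ and $C^N$ contributions, leaving only $-\alpha^N F^N(i)(1-F^N(i))$ together with the migration term $\frac{\kappa_H^N}{H^N(i)}\sum_j m(i,j)\left[(1-F^N(i))F^N(j)H^N(j) - F^N(i)(1-F^N(j))H^N(j)\right]$, which collapses to $\kappa_H^N \sum_j m(i,j)(F^N(j) - F^N(i)) H^N(j)/H^N(i)$. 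The second-order It\^o correction $-\frac{C^N(i)}{(H^N(i))^3}\beta_H^N A^N(i) + \frac{A^N(i)}{(H^N(i))^3}\beta_H^N C^N(i)$ vanishes. The two stochastic integrands combine into a continuous local martingale whose quadratic variation density is $\beta_H^N F^N(i)(1-F^N(i))/H^N(i)$, and by the defining formula for $W^{F,N}(i)$ this equals $\sqrt{\beta_H^N F^N(i)(1-F^N(i))/H^N(i)}\,dW^{F,N}(i)$.

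The main obstacle is the careful bookkeeping of the many cancellations in the It\^o expansion of $F^N$, together with the mild technicality that $(a,c)\mapsto a/(a+c)$ is singular at the origin; this forces one to invoke strict positivity of $H^N$ from Lemma \ref{lem:H.positive} (available under the standing assumption $2\iota_H^N \geq \beta_H^N$) before applying It\^o's formula to the quotient.
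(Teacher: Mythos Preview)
Your proof is correct and follows the same approach as the paper: L\'evy's characterization for the Brownian motions, direct summation for \eqref{eq:dH}, rewriting for \eqref{eq:dP}, and It\^o's formula applied to $(a,c)\mapsto a/(a+c)$ for \eqref{eq:dF} with the same cancellations. The only caveat concerns your final remark: Lemma~\ref{lem:H.positive} appears after Lemma~\ref{lem:HFP} and carries the extra hypothesis $\iota_H^N\geq \tfrac{1}{2}\beta_H^N$, which is not part of the setting of Section~\ref{sec:convF_setting}; the paper simply takes $H^N>0$ as implicit in the definition of $F^N$ as a $[0,1]$-valued process and does not address the singularity explicitly.
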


\begin{lemma} \label{lem:H.positive}
Assume the setting of Section \ref{sec:convF_setting}
and assume that for all $N\in\N$ we have
$\iota_H^N\geq \tfrac{1}{2}\beta_H^N$.
Furthermore, assume that we have for all $N\in\N$ and all $i\in\D$
that $\P$-a.s.~$H_0^N(i)>0$.
Then we have
\begin{equation} \label{eq:lem.H.positive.claim}
  \P\left[H_u^N(i)>0, \text{ for all } u\in[0,\infty) \text{, all } N\in\N\text{, and all }i\in\D\right]=1.
\end{equation}
\begin{proof}
The main problem is to control the contribution of the processes $(P^N)_{N\in\N}$
and resolve this with a cut-off argument.
For every $N,M\in\N$
let $\hat{H}^{N,M}\colon [0,\infty)\times \D \times \Omega \to [0,\infty)$ be an adapted process with continuous sample paths that
for all $t\in[0,\infty)$ and all $i\in\D$ satisfies $\P$-a.s.
\begin{equation} \begin{split}
  \hat{H}_t^{N,M}(i)
  &=
  \hat{H}_0^{N,M}(i)
  +\int_0^t
  \left[\hat{H}_s^{N,M}(i)\left(\lambda-\alpha^N-\kappa_H^N-\tfrac{\lambda}{K}\hat{H}_s^{N,M}(i)-\delta M\right)+\iota_H^N\right]\,ds
  \\
  &\quad
  +\int_0^t \sqrt{\beta_H^N\hat{H}_s^{N,M}(i)}\,dW_s^{H,N}(i)
\end{split} \end{equation}
with $\hat{H}_0^{N,M}(i)=H_0^N(i)$.
Due to Feller's boundary classification
\citep[e.g., p. 366 in ][]{EthierKurtz1986}
with the assumption that
for all $N\in\N$ it holds that $\iota_H^N\geq \tfrac{1}{2}\beta_H^N$
we have for every $N,M\in\N$ and all $i\in\D$ that
\begin{equation} \label{eq:hatH_t^n.positive}
  \P\left[\hat{H}_t^{N,M}(i)>0,\text{ for all }t\in[0,\infty)\right]=1.
\end{equation}
For all $N,M\in\N$, all $i\in\D$, and all $t\in[0,\infty)$
consider the event $A_M^N(i):=\left\{\sup\limits_{s\in[0,t]}P_s^N(i)\leq M\right\}$.
The fact that the processes
$P^N(i)$, $N\in\N$, $i\in\D$, have a.s.\ \cadlag\ sample paths (which are bounded on
closed intervals) implies
for all $N,M\in\N$, all $i\in\D$, and all $t\in[0,\infty)$ that
\begin{equation} \begin{split} \label{eq:properties.A_M}
  &A_M^N(i)\subseteq A_{M+1}^N(i),
  \\
  &\P\left[\bigcup\limits_{K\in\N}A_K^N(i)\right]=\P\left[\sup\limits_{s\in[0,t]}P_s^N(i)<\infty\right]=1.
\end{split} \end{equation}
Using a comparison result due to Ikeda and Watanabe
\citep[see e.g., Theorem V.43.1 in ][]{RogersWilliams2000b}
we get for all $N,M\in\N$, all $i\in\D$ , and all $t\in[0,\infty)$ that
\begin{equation} \begin{split} \label{eq:H_t>=hatH_t^M}
  &\P\left[\exists\, u\in[0,t]:
  H_u^N(i)<\hat{H}_u^{N,M}(i), \sup\limits_{s\in[0,t]}P_s^N(i)\leq M\right]=0.
\end{split} \end{equation}
Thus, combining \eqref{eq:hatH_t^n.positive}, \eqref{eq:properties.A_M}, and \eqref{eq:H_t>=hatH_t^M} we obtain
for all $N\in\N$, all $i\in\D$, and all $t\in[0,\infty)$ that
\begin{equation} \begin{split}
  1
  &\geq
  \P\left[H_u^N(i)>0, \text{ for all } u\in[0,t]\right]
  =
  1-\P\left[\exists\, u\in[0,t]: H_u^N(i)=0\right]
  \\
  &\geq 1-\sum_{M\in\N}\P\left[\exists\, u\in[0,t]:
  H_u^N(i)=0, \sup\limits_{s\in[0,t]}P_s^N(i)\leq M\right]
  \\
  &\geq 1-\sum_{M\in\N}\P\left[\exists\, u\in[0,t]:
  H_u^N(i)<\hat{H}_u^{N,M}(i), \sup\limits_{s\in[0,t]}P_s^N(i)\leq M\right]
  = 1.
\end{split} \end{equation}
This implies for all $N\in\N$, all $i\in\D$, and all $t\in[0,\infty)$ that
$\P\left[H_u^N(i)>0, \text{ for all } u\in[0,t]\right]=1$,
which in turn implies \eqref{eq:lem.H.positive.claim}.
This completes the proof of Lemma \ref{lem:H.positive}.
\end{proof}
\end{lemma}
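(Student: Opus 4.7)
The plan is to bound $H^N(i)$ from below, pathwise, by a scalar SDE whose non-hitting of zero is a classical consequence of Feller's boundary classification. The key observation is that the assumption $2\iota_H^N \geq \beta_H^N$ is precisely the Feller condition that prevents a CIR-type SDE of the form $dX = (\iota - bX)\,dt + \sqrt{\beta X}\,dW$ from ever reaching $0$, so the task reduces to decoupling the equation for $H^N(i)$ from the other components and dominating its drift from below by something of this form.

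First, I would localize with respect to the parasite density. On the event $A_M^N(i) := \{\sup_{s\in[0,t]} P_s^N(i)\leq M\}$, the term $-\delta P_s^N(i) H_s^N(i)$ appearing in the drift of $H^N(i)$ in \eqref{eq:dH} is bounded below by $-\delta M\, H_s^N(i)$. Dropping the nonnegative migration inflow $\kappa_H^N \sum_{j\in\D} m(i,j) H^N(j)$ and using $F^N(i)\leq 1$ to bound $-\alpha^N F^N(i)H^N(i) \geq -\alpha^N H^N(i)$ yields a one-dimensional lower bound that depends only on $H^N(i)$ itself. I would define the candidate comparison process $\hat{H}^{N,M}(i)$ as the solution of the scalar SDE with this lower-bound drift and noise coefficient $\sqrt{\beta_H^N \hat{H}^{N,M}(i)}$, driven by the same Brownian motion $W^{H,N}(i)$ and started at $H_0^N(i)$.

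Second, I would verify positivity of $\hat{H}^{N,M}(i)$ via Feller's boundary classification. Near zero the drift is dominated by the constant $\iota_H^N$ and the squared diffusion coefficient is $\beta_H^N x$, so the inequality $2\iota_H^N \geq \beta_H^N$ forces the boundary at $0$ to be entrance and non-attainable, whence $\hat{H}^{N,M}(i)$ remains strictly positive for all times almost surely. Since $H^N(i)$ and $\hat{H}^{N,M}(i)$ share the same diffusion coefficient and are driven by the same Brownian motion, and since on $A_M^N(i)$ the drift of $H^N(i)$ pointwise dominates that of $\hat{H}^{N,M}(i)$, Ikeda--Watanabe's pathwise comparison theorem delivers $H_u^N(i)\geq \hat{H}_u^{N,M}(i) > 0$ for all $u\in[0,t]$ on $A_M^N(i)$.

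Finally, I would close the argument by a union. Since $P^N(i)$ has continuous sample paths, $\bigcup_{M\in\N} A_M^N(i)$ has full probability for each fixed $t\in[0,\infty)$; taking countable unions over $t\in\N$, $N\in\N$, and $i\in\D$ (which is at most countable) delivers \eqref{eq:lem.H.positive.claim}. The main obstacle is making the pathwise comparison rigorous in the presence of the random event $A_M^N(i)$: the cleanest implementation is to replace the drift contribution $-\delta P_s^N(i) H_s^N(i)$ by $-\delta (P_s^N(i)\wedge M) H_s^N(i)$ in a modified equation, define $\hat{H}^{N,M}(i)$ globally on the whole probability space, apply Ikeda--Watanabe there, and then observe that on $A_M^N(i)$ this modified equation agrees with \eqref{eq:dH}, so the comparison transfers back to the original process.
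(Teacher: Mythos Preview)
Your proposal is correct and follows essentially the same route as the paper: define a scalar comparison process $\hat{H}^{N,M}(i)$ with drift $\hat{H}_s(\lambda-\alpha^N-\kappa_H^N-\tfrac{\lambda}{K}\hat{H}_s-\delta M)+\iota_H^N$ driven by $W^{H,N}(i)$, invoke Feller's boundary classification under $2\iota_H^N\ge\beta_H^N$ for its positivity, apply Ikeda--Watanabe on the events $A_M^N(i)=\{\sup_{s\le t}P_s^N(i)\le M\}$, and let $M\to\infty$. Your closing remark about implementing the comparison via $P_s^N(i)\wedge M$ is a slightly cleaner way to handle the random localization than the paper's direct argument, but the substance is identical.
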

\begin{lemma} \label{lem:P.positive}
Assume the setting of Section \ref{sec:convF_setting} and assume 
that for all $N\in\N$ it holds that
$\iota_P^N\geq \tfrac{1}{2}\beta_P^N$.
Furthermore, assume that we have for all $N\in\N$ and all $i\in\D$
that $\P$-a.s.~$P_0^N(i)>0$.
Then we have
\begin{equation}
  \P\left[P_t^N(i)>0,\text{ for all }t\in[0,\infty), \text{ all }N\in\N,\text{ and all }i\in\D\right]
  =1.
\end{equation}
\end{lemma}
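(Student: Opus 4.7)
The strategy mirrors the proof of Lemma \ref{lem:H.positive}. The plan is to dominate $P^N(i)$ from below by a one-dimensional auxiliary Feller-type diffusion whose origin is inaccessible, after localizing to the event that $P^N(i)$ does not exceed a fixed level $M$. Rewriting the SDE \eqref{eq:dP}, the drift of $P^N(i)$ splits into the nonnegative contributions $\kappa_P^N \sum_{j\in\D} m(i,j) P^N_s(j)$, $(\eta - \rho F^N_s(i)) P^N_s(i) H^N_s(i)$ (nonnegative since $\rho<\eta$ and $F^N\in[0,1]$) and $\iota_P^N$, together with the negative contributions $-(\kappa_P^N+\nu) P^N_s(i) - \gamma (P^N_s(i))^2$. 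On the random time set $\{s : P^N_s(i) \leq M\}$ the quadratic piece is dominated by $\gamma (P^N_s(i))^2 \leq \gamma M P^N_s(i)$, so that the total drift of $P^N(i)$ is bounded below by the affine expression $-(\kappa_P^N+\nu+\gamma M) P^N_s(i) + \iota_P^N$.

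Guided by this bound, for every $N, M\in\N$ and $i\in\D$ I would introduce the one-dimensional auxiliary process $\hat{P}^{N,M}(i)$ solving
\begin{equation*}
  d\hat{P}^{N,M}_t(i)
  = \bigl[-(\kappa_P^N + \nu + \gamma M)\, \hat{P}^{N,M}_t(i) + \iota_P^N\bigr]\,dt
  + \sqrt{\beta_P^N\, \hat{P}^{N,M}_t(i)}\,dW^{P,N}_t(i),
\end{equation*}
started at $\hat{P}^{N,M}_0(i) = P^N_0(i) > 0$ and driven by the same Brownian motion $W^{P,N}(i)$. This is a CIR-type diffusion with constant positive drift $\iota_P^N$ at the origin; Feller's boundary classification (as used for $\hat{H}^{N,M}$ in the proof of Lemma \ref{lem:H.positive}) together with the hypothesis $\iota_P^N \geq \tfrac{1}{2}\beta_P^N$ guarantees that $0$ is inaccessible, so $\P[\hat{P}^{N,M}_t(i) > 0\text{ for all }t\in[0,\infty)] = 1$.

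The next step is to couple $P^N(i)$ and $\hat{P}^{N,M}(i)$ via the pathwise comparison result of Ikeda-Watanabe (Theorem V.43.1 in Rogers and Williams~\cite{RogersWilliams2000b}), applied up to the stopping time $\tau^{N,M}(i) := \inf\{s\geq 0 : P^N_s(i) > M\}$. Both SDEs share the diffusion coefficient $x\mapsto\sqrt{\beta_P^N\, x}$ (which satisfies the Yamada-Watanabe modulus-of-continuity condition) and on $[0,\tau^{N,M}(i)]$ the drift of $P^N(i)$ dominates that of $\hat{P}^{N,M}(i)$ pointwise, so the comparison yields $P^N_{u\wedge\tau^{N,M}(i)}(i) \geq \hat{P}^{N,M}_{u\wedge\tau^{N,M}(i)}(i) > 0$ almost surely. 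Since $P^N(i)$ has continuous sample paths we have $\P[\cup_{M\in\N}\{\tau^{N,M}(i)>t\}] = 1$ for every fixed $t$, and the same countable-union argument as in the display following \eqref{eq:H_t>=hatH_t^M} then gives $\P[P^N_u(i)>0\text{ for all }u\in[0,t]] = 1$; intersecting over $t\in\N$, $N\in\N$, $i\in\D$ yields the lemma. The only new ingredient compared with Lemma \ref{lem:H.positive} is the quadratic self-competition $\gamma(P^N)^2$, which is precisely what the localization at level $M$ absorbs; the main obstacle is therefore just selecting the dominating affine drift so that the comparison hypothesis holds on $[0,\tau^{N,M}(i)]$ while preserving Feller's entrance condition at $0$.
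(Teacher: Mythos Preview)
Your argument is correct and is precisely the strategy the paper intends (the paper writes only ``Analogous to the proof of Lemma~\ref{lem:H.positive}''). Note that because every cross-term in the drift of $P^N(i)$ involving other processes is nonnegative, you may keep the quadratic $-\gamma(\hat P^N)^2$ in the comparison process and dispense with the localization at level $M$ altogether; the drift inequality then holds for every state value and Feller's test at $0$ is unaffected.
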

\begin{proof}
  Analogous to the proof of Lemma \ref{lem:H.positive}.
\end{proof}
\begin{lemma} \label{lem:estimate.term.sum^p}
  Let $\mathcal{D}$ be a countable set, let
  $m\in[0,\infty)^{\mathcal{D}\times\mathcal{D}}$ be a
  stochastic matrix, let $\sigma\in[0,\infty)^{\mathcal{D}}$, $c\in[0,\infty)$
  satisfy for all $j\in\mathcal{D}$ that
  \begin{equation} \label{eq:bound.sumi.sigmai.mij2}
    \sum_{i\in\D}\sigma_i m(i,j)\leq c \sigma_j,
  \end{equation}
  and
  let 
  $x=(x_i)_{i\in\D}\in [0,\infty)^{\mathcal{D}}$, $p\in[1,\infty)$,
  $\D'\subseteq\D$.
  Then
  \begin{equation} 
    \sum_{i\in\D'}\sigma_i\Big(\sum_{j\in\D} m(i,j)x_j\Big)^p
    \leq\sum_{i\in\D}c\sigma_ix_i^p.
  \end{equation}
\begin{proof}
  Jensen's inequality, the fact that $m$ is a stochastic matrix,
  the theorem of Fubini--Tonelli
  and \eqref{eq:bound.sumi.sigmai.mij2} imply
  that
  \begin{equation}  \begin{split}
    \sum_{i\in\D'}\sigma_i\Big(\sum_{j\in\D} m(i,j)x_j\Big)^p
    \leq\sum_{i\in\D}\sigma_i\sum_{j\in\D} m(i,j)x_j^p
    \leq\sum_{j\in\D}c\sigma_jx_j^p.
  \end{split}     \end{equation}
  This proves the assertion.
\end{proof}
\end{lemma}

\subsection{Strong convergence of the spatial stochastic Lotka--Volterra processes} \label{sec:convergence.LV}
In this section we will show the convergence of the time-rescaled Lotka--Volterra processes as given in \eqref{eq:dH} and \eqref{eq:dP}.
In Lemmas \ref{lem:bound.norm.(H+P)^p}, \ref{lem:bound.norm.1/H^2}, and \ref{lem:bound.norm.1/P}
we will provide bounds for the expected value of the sum (over sets of demes) of functionals of the processes weighted by $\sigma$.
The proofs of these lemmas are technical are therefore deferred to the appendix.
Lemmas \ref{lem:bound.norm.(H+P)^p}, \ref{lem:bound.norm.1/H^2}, and \ref{lem:bound.norm.1/P}
are then used in Theorem \ref{thm:bound.V} to study the behavior of a spatial analogue of a well-known Lyapunov function
\citep[e.g.,][]{DobrinevskiFrey2012}.

\begin{lemma} \label{lem:bound.norm.(H+P)^p}
Assume the setting of Section \ref{sec:convF_setting} and
let $p\in\{1\}\cup [2,\infty)$.
Then we have
\begin{equation} \begin{split}
  &\sup\limits_{N\in\N}\sup\limits_{t\in[0,\infty)}
  \E\left[\left\|\left(2\eta H_t^N+\delta P_t^N\right)^p\right\|_{\sigma}\right]
  \leq
  \sup\limits_{N\in\N}\E\left[\left\|\left(2\eta H_0^N+\delta P_0^N\right)^p\right\|_{\sigma}\right]
  \\
  &\qquad
  +\|\underline{1}\|_\sigma
  \left(\tfrac{\lambda+\left(1-\frac{1}{p}+\frac{c}{p}\right)\left(\bar{\kappa}_H+\bar{\kappa}_P\right)}
  {2\min\left\{\tfrac{1}{2\eta}\tfrac{\lambda}{K},\tfrac{1}{4},\tfrac{1}{\delta}\gamma\right\}}
  \right)^p
  \left(1+\sqrt{1+
  \tfrac{4\min\left\{\tfrac{1}{2\eta}\tfrac{\lambda}{K},\tfrac{1}{4},\tfrac{1}{\delta}\gamma\right\}\left[2\eta\bar{\iota}_H+
    \delta\bar{\iota}_P+(p-1)\left(2\eta\betab_H+\tfrac{1}{2}\delta\betab_P\right)\right]}
  {\left(\lambda+\left(1-\frac{1}{p}+\frac{c}{p}\right)\left(\bar{\kappa}_H+\bar{\kappa}_P\right)\right)^2}}\right)^p.
\end{split} \end{equation}
\end{lemma}

\begin{lemma} \label{lem:bound.norm.1/H^2} \label{lem:bound.norm.P/H^2} \label{lem:bound.norm.1/H^2+P/H^2}
Assume the setting of Section \ref{sec:convF_setting} and
assume $\gamma\geq 2\delta$.
Furthermore, assume that for all $N\in\N$ we have
$\alpha^N+\kappa_H^N\leq\tfrac{\lambda}{4}$,
$\iota_P^N\leq\tfrac{\lambda(\nu+\lambda)}{8\delta}$, and
$\iota_H^N\geq \tfrac{4\delta\kappa_P^N}{3(\nu+\lambda)}+\tfrac{3}{2}\beta_H^N$.
Let $\hat{\D}\subseteq\D$ be a set.
Then we have
\begin{equation} \begin{split} \label{eq:claim.lemma.1/H^2}
  &\sup\limits_{N\in\N}\sup\limits_{t\in[0,\infty)}
  \E\left[\sum_{i\in\hat{\D}}\sigma_i\left(
  \tfrac{2}{\lambda+\nu}\tfrac{P_t^N(i)}{\left(H_t^N(i)\right)^2}+\tfrac{1}{2\delta}\tfrac{1}{\left(H_t^N(i)\right)^2}\right)\right]
  \leq
  \sup\limits_{N\in\N}\E\left[\sum_{i\in\hat{\D}}\sigma_i\left(\tfrac{2}{\lambda+\nu}\tfrac{P_0^N}{\left(H_0^N\right)^2}
  +\tfrac{1}{2\delta}\tfrac{1}{\left(H_0^N\right)^2}\right)\right]
  \\
  &\qquad
  +\tfrac{4\bar{\kappa}_{P}c}{3\lambda(\lambda+\nu)}
  \sup\limits_{N\in\N}\sup\limits_{t\in[0,\infty)}\E\left[\sum_{i\in\hat{\D}}\sigma_i\left(P_t^N(i)\right)^3\right]
  +\tfrac{4}{\lambda(\lambda+\nu)}
  \left(\tfrac{\eta^2}{\lambda}
  +\tfrac{4\lambda}{K^2}
  \right)
  \sup\limits_{N\in\N}\sup\limits_{t\in[0,\infty)}\E\left[\sum_{i\in\hat{\D}}\sigma_iP_t^N(i)\right]
  +\tfrac{2}{K^2\delta}.
\end{split} \end{equation}
\end{lemma}

\begin{lemma} \label{lem:bound.norm.1/P} \label{lem:bound.norm.1/PH}
Assume the setting of Section \ref{sec:convF_setting}
and assume $\lambda>\nu$ and $\eta-\rho >\tfrac{\lambda}{K}$.
Furthermore, assume that for all $N\in\N$ we have
$\iota_H^N\geq \tfrac{1}{2}\beta_H^N$,
$\kappa_P^N+\kappa_H^N+\alpha^N\leq\tfrac{\lambda-\nu}{2}$,
$\iota_P^N\geq \beta_P^N$, and
$\iota_H^N\geq\beta_H^N$.
Let $\hat{\D}\subseteq\D$ be a set.
Then we have
\begin{equation} \begin{split} \label{eq:claim.lemma.1/P}
  &\sup\limits_{N\in\N}\sup\limits_{t\in[0,\infty)}
  \E\left[\sum_{i\in\hat{\D}}\sigma_i\left(\tfrac{(\eta-\rho )-\frac{\lambda}{K}}{2(\bar{\kappa}_P+\nu)}
  \tfrac{1}{P_t^N(i)}+\tfrac{1}{P_t^N(i)H_t^N(i)}\right)\right]
  \leq
  \sup\limits_{N\in\N}
  \E\left[\sum_{i\in\hat{\D}}\sigma_i\left(\tfrac{(\eta-\rho )-\frac{\lambda}{K}}{2(\bar{\kappa}_P+\nu)}
  \tfrac{1}{P_0^N(i)}+\tfrac{1}{P_0^N(i)H_0^N(i)}\right)\right]
  \\
  &\qquad\qquad
  +\tfrac{1}{\min\left\{\bar{\kappa}_P+\nu,\frac{\lambda-\nu}{2}\right\}}
  \left(\gamma \tfrac{(\eta-\rho )-\frac{\lambda}{K}}{2(\bar{\kappa}_P+\nu)}
  +(\gamma+\delta)
  \sup\limits_{N\in\N}\sup\limits_{t\in[0,\infty)}
  \E\left[\sum_{i\in\hat{\D}}\sigma_i\tfrac{1}{H_t^N(i)}\right]\right).
\end{split} \end{equation}
\end{lemma}
The following theorem implies
for every $t\in[0,\infty)$
that the $L^2([0,t]\times l_{\sigma}^1\times\Omega;\R)$-distance
between
$(H^N_{\cdot N},P^N_{\cdot N})$
and
$(h_{\infty}(F^N_{\cdot N}),p_{\infty}(F^N_{\cdot N}))$ converges to $0$ as $N\to\infty$
at least with rate $\tfrac{1}{2}$; cf.\ also
Theorem~\ref{thm:sup.N.int.(H-h)^2.bounded}.

\begin{theorem} \label{thm:bound.V}
Assume the setting of Section \ref{sec:convF_setting},
let $(h_{\infty},p_{\infty})$ satisfy
\eqref{eq:def.h_infty.p_infty},
let Assumption \ref{ass:conv_LotkaVolterra} hold
and let $u\colon (0,\infty)^2\times[0,1]\to[0,\infty)$ satisfy
for all $(x,y,z)\in(0,\infty)^2\times[0,1]$ that
\begin{equation} \label{eq:def_lyapunov_function}
  u(x,y,z):=
  (\eta-\rho z)\left(x-h_\infty(z)-h_\infty(z)\ln\left(\tfrac{x}{h_{\infty}(z)}\right)\right)+\delta\left(y-p_\infty(z)-p_{\infty}(z)
  \ln\left(\tfrac{y}{p_{\infty}(z)}\right)\right).
\end{equation}
Then $u$ is well-defined and
there exists a constant $c_0\in(0,\infty)$
such that for every set $\hat{\D}\subseteq\D$,
for every $N\in\N$,
and every $t\in[0,\infty)$
it holds that
\begin{equation} \begin{split}
  &\E\Bigg[
  \sum\limits_{i\in\hat{\D}}\sigma_i
  u\left(H_t^N(i),P_t^N(i),F_t^N(i)\right)
  \Bigg]
  \\
  &\quad
  +
  \int_0^t
  \left(\eta-\rho\right)\tfrac{\lambda}{K}
  \E\left[\sum_{i\in\hat{\D}}\sigma_i\left(H_u^N(i)-h_\infty\left(F_u^N(i)\right)\right)^2\right]
  +\delta\gamma\E\left[\sum_{i\in\hat{\D}}\sigma_i\left(P_u^N(i)-p_\infty\left(F_u^N(i)\right)\right)^2\right]
  \,du
  \\
  &\leq
  \sup_{M\in\N}\E\Bigg[
  \sum\limits_{i\in\hat{\D}}\sigma_i
  u\left(H_0^M(i),P_0^M(i),F_0^M(i)\right)
  \Bigg]
  +tc_0\max\left\{\kappa_H^N,\kappa_P^N,\alpha^N,\iota_H^N,\iota_P^N,\beta_H^N,\beta_P^N\right\}<\infty.
\end{split} \end{equation}
\end{theorem}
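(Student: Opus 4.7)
The plan is a Lyapunov-function argument carried out via Itô's formula. Non-negativity of $u$ is immediate: for any $w^* > 0$, the function $w \mapsto w - w^* - w^* \ln(w/w^*)$ has derivative $1 - w^*/w$ and strictly positive second derivative, so it is convex with minimum $0$ at $w = w^*$; by Assumption \ref{ass:conv_LotkaVolterra} we have $\eta - \rho z \geq \eta - \rho > \tfrac{\lambda}{K} > 0$ for all $z \in [0,1]$, so both summands of $u$ are non-negative. For the integral estimate, I would apply Itô's formula to $u(H_t^N(i), P_t^N(i), F_t^N(i))$ using the dynamics \eqref{eq:dH}, \eqref{eq:dF}, \eqref{eq:dP}. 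By Lemma \ref{lem:HFP} the Brownian motions $W^{H,N}(i), W^{F,N}(i), W^{P,N}(i)$ are pairwise independent, so no mixed second-order corrections arise, only the diagonal terms $\tfrac{1}{2}\partial_x^2 u \cdot \beta_H^N H$, $\tfrac{1}{2}\partial_y^2 u \cdot \beta_P^N P$ and $\tfrac{1}{2}\partial_z^2 u \cdot \beta_H^N F(1-F)/H$.

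The heart of the argument is the classical Lotka-Volterra cancellation. Using the equilibrium identities $\lambda(1 - h_\infty(z)/K) = \delta p_\infty(z)$ and $(\eta - \rho z) h_\infty(z) - \nu - \gamma p_\infty(z) = 0$, together with the clean forms $\partial_x u = (\eta - \rho F)(1 - h_\infty(F)/H)$ and $\partial_y u = \delta(1 - p_\infty(F)/P)$, the core Lotka-Volterra drift (the terms in the dynamics of $H$ and $P$ that do not carry a small prefactor) simplifies to exactly
\[
-(\eta - \rho F)\tfrac{\lambda}{K}\bigl(H - h_\infty(F)\bigr)^2 - \delta \gamma \bigl(P - p_\infty(F)\bigr)^2,
\]
with the cross terms $(\eta - \rho F)\delta (H - h_\infty(F))(P - p_\infty(F))$ cancelling. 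This produces precisely the negative quadratic form on the left-hand side of the claim. All other drift contributions — from migration ($\kappa_H^N, \kappa_P^N$), selection ($\alpha^N$), immigration ($\iota_H^N, \iota_P^N$), the Itô corrections ($\beta_H^N, \beta_P^N$), and the $F$-dynamics hitting $\partial_z u$ and $\partial_z^2 u$ — carry a small prefactor. I would bound the $\sigma$-weighted sum of each such term in expectation, uniformly in $t$ and $N$, using \eqref{eq:bound.sumi.sigmai.mij} to shift the deme index in migration contributions and the moment bounds of Lemmas \ref{lem:bound.norm.(H+P)^p}, \ref{lem:bound.norm.1/H^2}, \ref{lem:bound.norm.1/P} on $H^p$, $P^p$, $1/H^2$, $P/H^2$, $1/P$, $1/(PH)$, valid under Assumption \ref{ass:conv_LotkaVolterra}.

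The main technical obstacle lies in the $F$-dynamics contribution. A direct computation yields $\partial_z u = -\rho\bigl(H - h_\infty(F) - h_\infty(F)\ln(H/h_\infty(F))\bigr) - (\eta - \rho F) h_\infty'(F) \ln(H/h_\infty(F)) - \delta p_\infty'(F) \ln(P/p_\infty(F))$, which contains logarithmic factors. Since $F \in [0,1]$ the functions $h_\infty, p_\infty$ and their derivatives are bounded between positive constants; the logarithms are controlled via $|\ln s| \leq s + 1/s$ and then estimated by the same weighted moment bounds. The $\partial_z^2 u$ term, multiplied by $\beta_H^N F(1-F)/H$, is handled analogously. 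To justify that the stochastic-integral components of Itô's formula have vanishing expectation, I would localize exactly as in the proofs of Lemmas \ref{lem:bound.norm.1/H^2} and \ref{lem:bound.norm.1/P}: introduce stopping times $\tau_l^{N,n}$ truncating the weighted sum $\sum_{i \in \D_n} \sigma_i(H + P + 1/H + 1/P)$ at level $l$, take expectations on the stopped process, and then pass to the limit $l \to \infty$ and $n \to \infty$ via Fatou's lemma and monotone convergence along an exhausting sequence $\D_n \uparrow \hat{\D}$. Collecting every error contribution under a common constant $c_0 \max\{\kappa_H^N, \kappa_P^N, \alpha^N, \iota_H^N, \iota_P^N, \beta_H^N, \beta_P^N\}$ and integrating in $t$ yields the stated bound.
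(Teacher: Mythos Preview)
Your approach is essentially the paper's: It\^o's formula applied to $u(H,P,F)$, the Lotka--Volterra equilibrium identities yielding exactly the negative quadratic form $-(\eta-\rho F)\tfrac{\lambda}{K}(H-h_\infty)^2-\delta\gamma(P-p_\infty)^2$, and all remaining terms carrying a small prefactor and being bounded via Lemmas \ref{lem:bound.norm.(H+P)^p}, \ref{lem:bound.norm.1/H^2}, \ref{lem:bound.norm.1/P} together with \eqref{eq:bound.sumi.sigmai.mij}. Two minor points of divergence are worth noting.

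First, the paper does not localize with stopping times here; it directly verifies that the three stochastic integrands are square-integrable using the moment lemmas (so the It\^o integrals are true martingales), and only afterwards passes $\D_n\uparrow\hat{\D}$ by monotone convergence. Your localization scheme would also work but is unnecessary.

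Second, your stated logarithm estimate $|\ln s|\leq s+1/s$ is slightly too crude for one term. The migration contribution to $\partial_z u$ produces, after bounding $|F_j-F_i|\leq 1$, a summand of the form $\rho h_\infty(F_i)\,|\ln(H_i/h_\infty(F_i))|\cdot \tfrac{1}{H_i}\sum_j m(i,j)H_j$. With your bound this yields a piece proportional to $\tfrac{1}{H_i^2}\sum_j m(i,j)H_j$, and no Young splitting of this into powers of $H_j$ and inverse powers of $H_i$ stays within the available moments ($H^4$ and $1/H^2$); any such splitting forces an inverse moment of $H$ strictly higher than second order. The paper uses the sharper $|\ln x|\leq \sqrt{x}+1/\sqrt{x}$ (and the variant $|\ln x|\leq x+1/\sqrt{x}$), which replaces $1/H_i^2$ by $1/H_i^{3/2}$ and then, via Young with exponents $4$ and $4/3$, lands exactly on $1/H^2$ and $H^4$. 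This is a small adjustment, not a structural gap, but you should be aware that the computation is tight against the available inverse-moment bound.
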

\begin{proof}
The main steps of this proof are  Ito's lemma (see \eqref{eq:applyIto} below)
applied to the Lyapunov-type function \eqref{eq:def.V},
 the specific relation \eqref{eq:calculation.lambda.nu}
 of the equilibrium state
 and control of inverse moments appearing in remainder terms by 
Lemmas \ref{lem:bound.norm.(H+P)^p}, \ref{lem:bound.norm.1/H^2}, and \ref{lem:bound.norm.1/P}.
For the rest of this proof fix a set $\hat{\D}\subseteq\D$.
Define $\D_0:=\emptyset$ and 
for every $n\in\N$ let $\D_n\subseteq\hat{\D}$ be a set with $|\D_n|=\min\left\{n,|\hat{\D}|\right\}$ and $\D_n\supseteq\D_{n-1}$.
Assume that $\hat{\D}=\cup_{n\in\N}\D_n$.
We will first show that $u$ is well-defined.
Define for all $x\in(0,\infty)$ the real-valued function $(0,\infty)\ni y\mapsto f_x(y):=x-y-y\ln\left(\tfrac{x}{y}\right)$.
For all $x\in(0,\infty)$ the function $f_x$ has for all $y\in(0,\infty)$ first and second order derivatives
$\tfrac{df_x}{dy}(y)=\ln(y)-\ln(x)$
and
$\tfrac{d^2f_x}{d y^2}(y)=\tfrac{1}{y}>0$.
Thus, for all $x\in(0,\infty)$ the function $f_x$ has
its global minimum at $x$ with $f_x(x)=0$.
Consequently, for any $(x,y)\in(0,\infty)^2$ we have $f_x(y)\geq f_x(x)=0$.
This shows that for all $(x,y,z)\in(0,\infty)^2\times[0,1]$ we have that $u(x,y,z)\geq 0$.
In order to prove the second part of the claim, we will make use of a Lyapunov function that is defined here analogously to the well-known Lyapunov function in the deterministic setting.
Define $D_V:=\Big(l_\sigma^1\cap (0,\infty)^\D\Big) \times \Big(l_\sigma^1\cap (0,\infty)^\D\Big) \times E_1$.
For any subset $\hat{\D}'\subseteq\hat{\D}$
define the function $V_{\hat{\D}'}\colon D_V\to [0,\infty]$
for any $(h, p, f)\in D_V$ by
\begin{equation} \begin{split} \label{eq:def.V}
  V_{\hat{\D}'}((h, p, f))
  &:=
  \sum\limits_{i\in\hat{\D}'}\sigma_iu(h_i,p_i,f_i).
\end{split} \end{equation}
Due to the non-negativity of the mapping $u$,
we obtain for any $\hat{\D}'\subseteq\hat{\D}$ and any $z\in D_V$ that
$V_{\hat{\D}'}(z)\in[0,\infty]$ is well-defined.
The fact that for all $x\in(0,\infty)$ it holds that $-\ln(x)\leq
\sqrt{\tfrac{1}{x}}$
and Young's inequality imply for all $x,y\in(0,\infty)$ that
\begin{equation}  \begin{split}
  |f_x(y)|=f_x(y)=x-y-y\ln(\tfrac{x}{y})
  \leq x+y\sqrt{\tfrac{y}{x}}\leq 2+x^4+\tfrac{y^3}{x}.
\end{split}     \end{equation}
This, the fact that $h_{\infty}$, $p_{\infty}$ are bounded and the assumption
$\sup_{N\in\N}\E\big[\big\|
\left(H_0^N+P_0^N\right)^4+\tfrac{1}{(H_0^N)^2}+\tfrac{1}{P_0^N}
\big\|_{\sigma}\big]<\infty$
yield that
\begin{align}
  &\nonumber\sup\limits_{N\in\N}\E\left[V_{\D}\left(H_0^N,P_0^N,F_0^N\right)\right]\\
  & \label{eq:bound.V0}\leq\sup\limits_{N\in\N}
  \E\Big[\sum_{i\in\mathcal{D}}\sigma_i\Big(\eta\Big(
     2+|H_0^N(i)|^4+\tfrac{|h_{\infty}(F_0^N(i))|^3}{H_0^N(i)}\Big)
     +\delta\Big(
     2+|P_0^N(i)|^4+\tfrac{|p_{\infty}(F_0^N(i))|^3}{P_0^N(i)}\Big)
     \Big]\\
  &\leq
  (\eta+\delta)\sup_{N\in\N}\Big(5+\E\Big[\|(H_0^N)^4\|_{\sigma}\Big]
  +\E\Big[\|\tfrac{\sup_{z\in[0,1]}h_{\infty}(z)}{(H_0^N)^2}\|_{\sigma}\Big]
  +
  \E\Big[\|(P_0^N)^4\|_{\sigma}\Big]
  +\E\Big[\|\tfrac{\sup_{z\in[0,1]}p_{\infty}(z)}{P_0^N}\|_{\sigma}\Big]
  \Big)
  <\infty.\nonumber
\end{align}
We now calculate the first and second order partial derivatives that we will need in the application of It\^o's lemma below.
For all $n\in\N$, $z=(h, p, f)\in D_V$, and $i\in\D_n$ we get
$\tfrac{dV_{\D_n}}{dh_i}(z)=\sigma_i(\eta-\rho f_i)\Big(1-\tfrac{h_\infty(f_i)}{h_i}\Big)$,
$\tfrac{d^2V_{\D_n}}{dh_i^2}(z)=\sigma_i(\eta-\rho f_i)\tfrac{h_\infty(f_i)}{h_i^2}$,
$\tfrac{dV_{\D_n}}{dp_i}(z)=\sigma_i\delta\Big(1-\tfrac{p_\infty(f_i)}{p_i}\Big)$, and
$\tfrac{d^2V_{\D_n}}{dp_i^2}(z)=\sigma_i\delta\tfrac{p_\infty(f_i)}{p_i^2}$
as well as 
\begin{equation} \begin{split}
  &\tfrac{dV_{\D_n}}{df_i}(z)=\sigma_i
  \Bigg[
  -\rho\left(h_i-h_\infty(f_i)-h_\infty(f_i)\ln\left(\tfrac{h_i}{h_{\infty}(f_i)}\right)\right)
  +(\eta-\rho f_i)
  \Bigg(
  - h_\infty'(f_i)- h_\infty'(f_i)\ln\left(\tfrac{h_i}{h_\infty(f_i)}\right)
  \\
  &\quad
  -\tfrac{(h_\infty(f_i))^2}{h_i}\tfrac{-h_i}{(h_\infty(f_i))^2} h_\infty'(f_i)
  \Bigg)
  +\delta\left(
  - p_\infty'(f_i)- p_\infty'(f_i)\ln\left(\tfrac{p_i}{p_\infty(f_i)}\right)
  -\tfrac{(p_\infty(f_i))^2}{p_i}\tfrac{-p_i}{(p_\infty(f_i))^2} p_\infty'(f_i)
  \right)
  \Bigg]
  \\
  &
  =\sigma_i\Bigg[
  -\rho\left(h_i-h_\infty(f_i)-h_\infty(f_i)\ln\left(\tfrac{h_i}{h_{\infty}(f_i)}\right)\right)
  -(\eta-\rho f_i) h_\infty'(f_i)\ln\left(\tfrac{h_i}{h_\infty(f_i)}\right)
  -\delta p_\infty'(f_i)\ln\left(\tfrac{p_i}{p_\infty(f_i)}\right)
  \Bigg]
\end{split} \end{equation}
and
\begin{equation} \begin{split}
  \tfrac{d^2V_{\D_n}}{df_i^2}(z)
  =&
  \sigma_i
  \Bigg[
  \rho\left(
   h_\infty'(f_i)+ h_\infty'(f_i)\ln\left(\tfrac{h_i}{h_\infty(f_i)}\right)
  +\tfrac{(h_\infty(f_i))^2}{h_i}(-1)\tfrac{h_i}{(h_\infty(f_i))^2} h_\infty'(f_i)
  \right)
  +\rho h_\infty'(f_i)\ln\left(\tfrac{h_i}{h_\infty(f_i)}\right)
  \\
  &-
  (\eta-\rho f_i)\left(
   h_\infty''(f_i)\ln\left(\tfrac{h_i}{h_\infty(f_i)}\right)
  + h_\infty'(f_i)\tfrac{h_\infty(f_i)}{h_i}\tfrac{-h_i}{(h_\infty(f_i))^2} h_\infty'(f_i)
  \right)
  \\
  &-
  \delta\left(
   p_\infty''(f_i)\ln\left(\tfrac{p_i}{p_\infty(f_i)}\right)
  + p_\infty'(f_i)\tfrac{p_\infty(f_i)}{p_i}\tfrac{-p_i}{(p_\infty(f_i))^2} p_\infty'(f_i)
  \right)
  \Bigg]
  \\
  =&
  \sigma_i\Bigg[
  2\rho h_\infty'(f_i)\ln\left(\tfrac{h_i}{h_\infty(f_i)}\right)
  -(\eta-\rho f_i)\left(
   h_\infty''(f_i)\ln\left(\tfrac{h_i}{h_\infty(f_i)}\right)
  -\left( h_\infty'(f_i)\right)^2\tfrac{1}{h_\infty(f_i)}
  \right)
  \\
  &-
  \delta\left(
   p_\infty''(f_i)\ln\left(\tfrac{p_i}{p_\infty(f_i)}\right)
  -\left( p_\infty'(f_i)\right)^2\tfrac{1}{p_\infty(f_i)}
  \right)
  \Bigg].
\end{split} \end{equation}
Next we show that It\^o integrals coming from It\^o formula have
vanishing expectation.
Recall that we have for all $x\in[0,1]$ that
$h_\infty(x)=\tfrac{1}{b(a-x)}$ and
$p_\infty(x)=\tfrac{\lambda}{\delta}\left(1-\tfrac{1}{Kb(a-x)}\right)$
and note that the assumption that $\eta-\rho>\tfrac{\nu}{K}$ implies for all $x\in[0,1]$ that $p_\infty(x)>0$.
Therefore, we get for all $x\in[0,1]$,
\begin{equation} \begin{split} \label{eq:derivatives.h_infty.p_infty}
  h_\infty'(x)&=\tfrac{1}{b(a-x)^2}>0,
  \;
  h_\infty''(x)=\tfrac{2}{b(a-x)^3}>0,
  \\
  p_\infty'(x)&=-\tfrac{\lambda}{\delta Kb(a-x)^2}<0,
  \;
  p_\infty''(x)=-\tfrac{2\lambda}{\delta Kb(a-x)^3}<0.
\end{split} \end{equation}
So $h_\infty$, $h_\infty'$, and $h_\infty''$ are strictly increasing on $[0,1]$ while
$p_\infty$, $p_\infty'$, and $p_\infty''$ are strictly decreasing on $[0,1]$.
Also we have that $\max\limits_{x\in[0,1]}\delta p_\infty(x)\leq\lambda$.
Observe that for all $x\in(0,\infty)$ we have $\left|\ln(x)\right|\leq\sqrt{x}+\tfrac{1}{\sqrt{x}}$.
Together with Young's inequality as well as
Lemmas \ref{lem:bound.norm.(H+P)^p}, \ref{lem:bound.norm.1/H^2}, and \ref{lem:bound.norm.1/P}
we get for all $t\in[0,\infty)$ and all $N,n\in\N$ that
\begin{equation} \begin{split}
  &\E\left[\sum_{i\in\D_n}\sigma_i
  \int_0^t\left(\sqrt{\beta_P^N P_u^N(i)}\delta\left(1-\tfrac{p_\infty(F_u^N(i))}{P_u^N(i)}\right)\right)^2\,du\right]
  \leq
  \betab_P\delta^2\E\left[\sum_{i\in\D_n}\sigma_i\int_0^tP_u^N(i)\left(1+\tfrac{\left(p_\infty(0)\right)^2}{\left(P_u^N(i)\right)^2}\right)\,du\right]
  \\
  &\quad
  \leq
  \betab_P\delta^2\sup_{u\in[0,t]}\E\left[\sum_{i\in\D_n}\sigma_i
  t\left(P_u^N(i)+\tfrac{\left(p_\infty(0)\right)^2}{P_u^N(i)}\right)
  \right]
  <\infty
\end{split} \end{equation}
and
\begin{equation} \begin{split}
  &\E\left[\sum_{i\in\D_n}\sigma_i\int_0^t
  \left(\sqrt{\beta_H^N H_u^N(i)}(\eta-\rho F_u^N(i))\left(1-\tfrac{h_\infty(F_u^N(i))}{H_u^N(i)}\right)\right)^2\,du
  \right]
  \\
  &\quad
  \leq
  \betab_H\eta^2\sup_{u\in[0,t]}\E\left[\sum_{i\in\D_n}\sigma_i
  t\left(H_u^N(i)+\tfrac{\left(h_\infty(1)\right)^2}{H_u^N(i)}\right)\right]
  <\infty
\end{split} \end{equation}
and
\begin{equation} \begin{split}
  &\E\Bigg[
  \sum_{i\in\D_n}\sigma_i\int_0^t
  \Bigg(\sqrt{\tfrac{\beta_H^NF_u^N(i)\left(1-F_u^N(i)\right)}{H_u^N(i)}}
  \Bigg(
  -\rho\left(H_u^N(i)-h_\infty(F_u^N(i))-h_\infty(F_u^N(i))\ln\left(\tfrac{H_u^N(i)}{h_{\infty}(F_u^N(i))}\right)\right)
  \\
  &\qquad\quad
  -(\eta-\rho F_u^N(i)) h_\infty'(F_u^N(i))\ln\left(\tfrac{H_u^N(i)}{h_\infty(F_u^N(i))}\right)
  -\delta p_\infty'(F_u^N(i))\ln\left(\tfrac{P_u^N(i)}{p_\infty(F_u^N(i))}\right)
  \Bigg)\Bigg)^2\,du\Bigg]
  \\
  &\quad
  \leq
  \betab_H\E\Bigg[\sum_{i\in\D_n}\sigma_i
  \int_0^t
  \tfrac{1}{H_u^N(i)}
  \Bigg(
  \rho H_u^N(i)
  +\rho h_\infty(1)
  +\rho h_\infty(1)\left(\tfrac{\sqrt{H_u^N(i)}}{\sqrt{h_\infty(0)}}+\tfrac{\sqrt{h_\infty(1)}}{\sqrt{H_u^N(i)}}\right)
  \\
  &\qquad\quad  
  +\eta h_\infty'(1)\left(\tfrac{\sqrt{H_u^N(i)}}{\sqrt{h_\infty(0)}}+\tfrac{\sqrt{h_\infty(1)}}{\sqrt{H_u^N(i)}}\right)
  +\delta \left|p_\infty'(1)\right|\left(\tfrac{\sqrt{P_u^N(i)}}{\sqrt{p_\infty(1)}}
  +\tfrac{\sqrt{p_\infty(0)}}{\sqrt{P_u^N(i)}}\right)
  \Bigg)^2
  \,du\Bigg]
  \\
  &\quad
  \leq
  \betab_H\sup_{u\in[0,t]}\E\Bigg[\sum_{i\in\D_n}\sigma_i2^7
  \Bigg(
  \rho^2 H_u^N(i)
  +\rho^2 \left(h_\infty(1)\right)^2
  \left(\tfrac{1}{H_u^N(i)}+\tfrac{1}{h_\infty(0)}+\tfrac{h_\infty(1)}{\left(H_u^N(i)\right)^2}\right)
  \\
  &\qquad\quad  
  +\eta^2 \left(h_\infty'(1)\right)^2\left(\tfrac{1}{h_\infty(0)}+\tfrac{h_\infty(1)}{\left(H_u^N(i)\right)^2}\right)
  +\delta^2 \left(p_\infty'(1)\right)^2\left(\tfrac{1}{p_\infty(1)}\tfrac{P_u^N(i)}{H_u^N(i)}
  +\tfrac{p_\infty(0)}{P_u^N(i)H_u^N(i)}\right)
  \Bigg)
  \Bigg]
  <\infty.
\end{split} \end{equation}
Hence, we obtain for all $t\in[0,\infty)$ and all $N,n\in\N$ that
\begin{equation} \begin{split} \label{eq:V.stoch.int.0}
  &\E\left[\int_0^t
  \sum_{i\in\D_n}\sigma_i\sqrt{\beta_P^N P_u^N(i)}\delta\left(1-\tfrac{p_\infty(F_u^N(i))}{P_u^N(i)}\right)
  \,dW_u^{P,N}(i)\right]
  =0,
  \\
  &\E\left[\int_0^t
  \sum_{i\in\D_n}\sigma_i\sqrt{\beta_H^N H_u^N(i)}(\eta-\rho F_u^N(i))\left(1-\tfrac{h_\infty(F_u^N(i))}{H_u^N(i)}\right)
  \,dW_u^{H,N}(i)\right]
  =0,
  \\
  &\E\left[\int_0^t
  \sum_{i\in\D_n}\sigma_i\sqrt{\tfrac{\beta_H^NF_u^N(i)\left(1-F_u^N(i)\right)}{H_u^N(i)}}
  \Bigg[
  -\rho\left(H_u^N(i)-h_\infty(F_u^N(i))-h_\infty(F_u^N(i))\ln\left(\tfrac{H_u^N(i)}{h_{\infty}(F_u^N(i))}\right)\right)
  \right.
  \\
  &\qquad
  \left.
  -(\eta-\rho F_u^N(i)) h_\infty'(F_u^N(i))\ln\left(\tfrac{H_u^N(i)}{h_\infty(F_u^N(i))}\right)
  -\delta p_\infty'(F_u^N(i))\ln\left(\tfrac{P_u^N(i)}{p_\infty(F_u^N(i))}\right)
  \Bigg]
  \,dW_u^{F,N}(i)\right]
  =0.
\end{split} \end{equation}
Next we bound certain remainder terms.
For all $t\in[0,\infty)$, all $N\in\N$, and all $i\in\D$
define
\begin{equation} \begin{split} \label{eq:def.barh.barp.R.b}
  R_t^N(i)&:=\max\Bigg\{
  \max\Big\{\eta c,\rho c,\rho,
  c\eta \tfrac{h_\infty'\left(1\right)}{h_\infty\left(0\right)},
  \eta \tfrac{h_\infty'\left(1\right)}{h_\infty\left(0\right)}\Big\} H_t^N(i),
  \eta h_\infty\left(1\right),
  \eta,
  \\
  &\qquad\quad
  \max\Big\{\tfrac{\eta}{2} h_\infty\left(1\right), \rho\left(h_\infty\left(1\right)\right)^2,
  \tfrac{\eta}{2}\tfrac{\left(h_\infty'\left(1\right)\right)^2}{h_\infty\left(0\right)},
  \tfrac{\delta}{2}\tfrac{\left(p_\infty'(1)\right)^2}{p_\infty(1)}\Big\} \tfrac{1}{H_t^N(i)},
  \delta cP_t^N(i),
  \delta p_\infty\left(0\right),
  \delta,
  \tfrac{\delta}{2}\tfrac{p_\infty\left(0\right)}{P_t^N(i)},
  \\
  &\qquad\quad
  \max\Big\{
  \tfrac{1}{2}\rho^2 \left(h_\infty\left(1\right)\right)^2,
  \tfrac{3}{4}\rho^{\frac{4}{3}} \left(h_\infty\left(1\right)\right)^2,
  \tfrac{3}{4}\left(\eta h_\infty'\left(1\right)\sqrt{h_\infty\left(1\right)}\right)^{\frac{4}{3}},
  \tfrac{1}{4},
  \tfrac{\eta}{2}h_\infty''\left(1\right)h_\infty\left(1\right)
  \Big\}\tfrac{1}{\left(H_t^N(i)\right)^2},
  \\
  &\qquad\quad
  \tfrac{1}{2}c\left(H_t^N(i)\right)^2,
  \tfrac{1}{4}c\left(H_t^N(i)\right)^4,
  \tfrac{1}{2}\left(\tfrac{\delta \left|p_\infty'\left(1\right)\right|}{\sqrt{p_\infty\left(1\right)}}\right)^2\tfrac{P_t^N(i)}{\left(H_t^N(i)\right)^2},
  \tfrac{1}{2}\delta^2 \left|p_\infty'\left(1\right)\right|^2\tfrac{p_\infty\left(0\right)}{P_t^N(i)H_t^N(i)},
  \\
  &\qquad\quad
  \delta \left|p_\infty'\left(1\right)\right|\sqrt{\tfrac{p_\infty\left(0\right)}{P_t^N(i)}},
  \rho \tfrac{h_\infty'\left(1\right)}{h_\infty\left(0\right)},
  \tfrac{\delta}{2}\tfrac{\left|p_\infty''(1)\right|}{p_\infty(1)}\tfrac{P_t^N(i)}{H_t^N(i)},
  \Bigg\},
  \\
  b^N&:=\max\left\{\kappa_H^N,\kappa_P^N,\alpha^N,\iota_H^N,\iota_P^N,\beta_H^N,\beta_P^N\right\}.
\end{split} \end{equation}
Note that $\lim\limits_{N\to\infty} b^N=0$.
Define $c_0:=32\sup\limits_{M\in\N}\sup\limits_{u\in[0,\infty)}\E\left[\left\|R_u^M\right\|_\sigma\right]$.
Observe that due to Lemmas \ref{lem:bound.norm.(H+P)^p}, \ref{lem:bound.norm.1/H^2}, and \ref{lem:bound.norm.1/P}
we have $c_0\in(0,\infty)$.
For all $t\in[0,\infty)$, all $N\in\N$, and all
$a\in\Big\{\eta,\rho,\eta \tfrac{h_\infty'\left(1\right)}{h_\infty\left(0\right)}\Big\}$
we have that
\begin{equation} \label{eq:estimate.R.1}
  \sum_{i\in\D}\sigma_ia\sum_{j\in\D} m(i,j)H_t^N(j)
  \leq
  \sum_{i\in\D}\sigma_icaH_t^N(i)
  \leq
  \sum_{i\in\D}\sigma_iR_t^N(i).
\end{equation}
Furthermore, we have for all $t\in[0,\infty)$ and all $N\in\N$ that
\begin{equation} \label{eq:estimate.R.2}
  \sum_{i\in\D}\sigma_i\delta\sum_{j\in\D} m(i,j)P_t^N(j)
  \leq
  \sum_{i\in\D}\sigma_i\delta cP_t^N(i)
  \leq
  \sum_{i\in\D}\sigma_iR_t^N(i).
\end{equation}
Using Young's inequality and Lemma \ref{lem:estimate.term.sum^p}
we get for all $t\in[0,\infty)$ and all $N\in\N$ that
\begin{equation} \begin{split} \label{eq:estimate.R.3}
  \sum_{i\in\D}\sigma_i
  \rho \tfrac{h_\infty\left(F_t^N(i)\right)}{H_t^N(i)}
  \sum\limits_{j\in\D}m(i,j)H_t^N(j)
  &\leq
  \sum_{i\in\D}\sigma_i
  \Bigg(
  \tfrac{1}{2}
  \Big(\rho \tfrac{h_\infty\left(F_t^N(i)\right)}{H_t^N(i)}\Big)^2
  +\tfrac{1}{2}
  \Big(\sum\limits_{j\in\D}m(i,j)H_t^N(j)\Big)^2
  \Bigg)
  \\
  &\leq
  \sum_{i\in\D}\sigma_i
  \Bigg(
  R_t^N(i)
  +
  \tfrac{1}{2}c\left(H_t^N(i)\right)^2
  \Bigg)
  \leq
  \sum_{i\in\D}\sigma_i2R_t^N(i),
\end{split} \end{equation}
and
\begin{equation} \begin{split} \label{eq:estimate.R.4}
  \sum_{i\in\D}\sigma_i
  \tfrac{-\delta p_\infty'\left(F_t^N(i)\right)}{\sqrt{p_\infty\left(F_t^N(i)\right)}}\tfrac{\sqrt{P_t^N(i)}}{H_t^N(i)}\sum\limits_{j\in\D}m(i,j)H_t^N(j)
  &\leq
  \sum_{i\in\D}\sigma_i
  \tfrac{1}{2}\Bigg(
  \Big(
  \tfrac{\delta \left|p_\infty'\left(1\right)\right|}{\sqrt{p_\infty\left(1\right)}}\tfrac{\sqrt{P_t^N(i)}}{H_t^N(i)}
  \Big)^2
  +
  \Big(\sum\limits_{j\in\D}m(i,j)H_t^N(j)\Big)^2
  \Bigg)
  \\
  &\leq
  \sum_{i\in\D}\sigma_i\Bigg(R_t^N(i)+\tfrac{1}{2}c\left(H_t^N(i)\right)^2\Bigg)
  \leq
  \sum_{i\in\D}\sigma_i2R_t^N(i),
\end{split} \end{equation}
and
\begin{equation} \begin{split} \label{eq:estimate.R.5}
  &\sum_{i\in\D}\sigma_i
  (-1)\delta p_\infty'\left(F_t^N(i)\right)\tfrac{\sqrt{p_\infty\left(F_t^N(i)\right)}}{\sqrt{P_t^N(i)}H_t^N(i)}\sum\limits_{j\in\D}m(i,j)H_t^N(j)
  \\
  &\quad
  \leq
  \sum_{i\in\D}\sigma_i
  \Bigg(
  \tfrac{1}{2}\delta^2 \left|p_\infty'\left(1\right)\right|^2\tfrac{p_\infty\left(0\right)}{P_t^N(i)H_t^N(i)}
  +\tfrac{1}{2}\Big(
  \sum\limits_{j\in\D}m(i,j)H_t^N(j)\Big)^2\tfrac{1}{H_t^N(i)}
  \Bigg)
  \\
  &\quad
  \leq
  \sum_{i\in\D}\sigma_i
  \Bigg(
  R_t^N(i)
  +\tfrac{1}{4}\Big(
  \sum\limits_{j\in\D}m(i,j)H_t^N(j)\Big)^4
  +\tfrac{1}{4}\tfrac{1}{\left(H_t^N(i)\right)^2}
  \Bigg)
  \leq
  \sum_{i\in\D}\sigma_i
  \Bigg(
  R_t^N(i)
  +\tfrac{1}{4}c\left(H_t^N(i)\right)^4
  +R_t^N(i)
  \Bigg)
  \\
  &\quad
  \leq
  \sum_{i\in\D}\sigma_i
  3R_t^N(i).
\end{split} \end{equation}
Again using Young's inequality and Lemma \ref{lem:estimate.term.sum^p} we get for all
$a\in\Big\{\rho \big(h_\infty\left(1\right)\big)^{\frac{3}{2}},\eta h_\infty'(1)\sqrt{h_\infty(1)}\Big\}$,
all $t\in[0,\infty)$, and all $N\in\N$
that
\begin{equation} \begin{split} \label{eq:estimate.R.6}
  \sum_{i\in\D}\sigma_i
  a\Big(\tfrac{1}{H_t^N(i)}\Big)^{\frac{3}{2}}
  \sum\limits_{j\in\D}m(i,j)H_t^N(j)
  &\leq
  \sum_{i\in\D}\sigma_i
  \Bigg(
  \tfrac{3}{4}
  a^{\frac{4}{3}} \Big(\tfrac{1}{H_t^N(i)}\Big)^2
  +
  \tfrac{1}{4}\Big(\sum\limits_{j\in\D}m(i,j)H_t^N(j)\Big)^4
  \Bigg)
  \\
  &\leq
  \sum_{i\in\D}\sigma_i
  \Bigg(
  \R_t^N(i)
  +\tfrac{1}{4}c\left(H_t^N(i)\right)^4
  \Bigg)
  \leq
  \sum_{i\in\D}\sigma_i2R_t^N(i).
\end{split} \end{equation}
Due to Lemma \ref{lem:HFP} we have that $W^{H,N}(i)$, $W^{F,N}(i)$, $N\in\N$, $i\in\D$, are independent Brownian motions
and due to
Lemmas \ref{lem:bound.norm.(H+P)^p}, \ref{lem:bound.norm.1/H^2}, and \ref{lem:bound.norm.1/P}
we have for all $t\in[0,\infty)$ and all $N\in\N$ that $\P$-a.s.~$\left(H_t^N, P_t^N, F_t^N\right)\in D_V$.
Thus, applying It\^o's lemma and using \eqref{eq:V.stoch.int.0}
we obtain for all $t\in[0,\infty)$ and all $N,n\in\N$ that
\begin{equation} \begin{split}\label{eq:applyIto}
  &\E\left[V_{\D_n}\left(\left(H_t^N, P_t^N, F_t^N\right)\right)\right]
  -\E\left[V_{\D_n}\left(\left(H_0^N, P_0^N, F_0^N\right)\right)\right]
  \\
  &=\E\Bigg[\int_0^t
  \sum_{i\in\D_n}\sigma_i\Bigg(
  \left(\eta-\rho F_u^N(i)\right)\left(1-\tfrac{h_\infty\left(F_u^N(i)\right)}{H_u^N(i)}\right)
  \Big\{\kappa_H^N\sum_{j\in\D} m(i,j)\left(H_u^N(j)-H_u^N(i)\right)
  \\
  &\quad
  +H_u^N(i)\left[\lambda\left(1-\tfrac{H_u^N(i)}{K}\right)
  -\delta P_u^N(i)-\alpha^NF_u^N(i)\right]
  +\iota_H^N\Big\}
  +\tfrac{\left(\eta-\rho F_u^N(i)\right)}{2}\tfrac{h_\infty\left(F_u^N(i)\right)}{\left(H_u^N(i)\right)^2}
  \beta_H^NH_u^N(i)
  \\
  &\quad
  +\delta\left(1-\tfrac{p_\infty\left(F_u^N(i)\right)}{P_u^N(i)}\right)
  \Big\{\kappa_P^N\sum_{j\in\D} m(i,j)\left(P_u^N(j)-P_u^N(i)\right)
  \\
  &\quad
  +P_u^N(i)\left[-\nu-\gamma P_u^N(i)
  +\left(\eta-\rho F_u^N(i)\right)H_u^N(i)\right] 
  +\iota_P^N\Big\}
  +\tfrac{\delta}{2}\tfrac{p_\infty(F_u^N(i))}{\left(P_u^N(i)\right)^2}
  \beta_P^NP_u^N(i)
  +\Bigg[
  -\rho\Big(
  H_u^N(i)
  \\
  &\quad
  -h_\infty\left(F_u^N(i)\right)
  -h_\infty\left(F_u^N(i)\right)\ln\left(\tfrac{H_u^N(i)}{h_\infty\left(F_u^N(i)\right)}\right)
  \Big)
  -\left(\eta-\rho F_u^N(i)\right)
  h_\infty'\left(F_u^N(i)\right)\ln\left(\tfrac{H_u^N(i)}{h_\infty\left(F_u^N(i)\right)}\right)
  \\
  &\quad
  -\delta p_\infty'\left(F_u^N(i)\right)\ln\left(\tfrac{P_u^N(i)}{p_\infty\left(F_u^N(i)\right)}\right)
  \Bigg]
  \Big\{
  \kappa_H^N
  \sum\limits_{j\in\D}m(i,j)\left(F_u^N(j)-F_u^N(i)\right)\tfrac{H_u^N(j)}{H_u^N(i)}
  -\alpha^NF_u^N(i)\left(1-F_u^N(i)\right)
  \Big\}
  \\
  &\quad
  +\Big\{
  \rho h_\infty'\left(F_u^N(i)\right)\ln\left(\tfrac{H_u^N(i)}{h_\infty\left(F_u^N(i)\right)}\right)
  -\tfrac{\eta-\rho F_u^N(i)}{2}
  \Big(
   h_\infty''\left(F_u^N(i)\right)\ln\left(\tfrac{H_u^N(i)}{h_\infty\left(F_u^N(i)\right)}\right)
  -\tfrac{\left(h_\infty'\left(F_u^N(i)\right)\right)^2}{h_\infty\left(F_u^N(i)\right)}
  \Big)
  \\
  &\quad
  -\tfrac{\delta}{2}\Big(
  p_\infty''(F_u^N(i))\ln\left(\tfrac{P_u^N(i)}{p_\infty(F_u^N(i))}\right)
  -\tfrac{\left(p_\infty'(F_u^N(i))\right)^2}{p_\infty(F_u^N(i))}
  \Big)
  \Big\}
  \tfrac{\beta_H^N F_u^N(i)\left(1-F_u^N(i)\right)}{H_u^N(i)}
  \Bigg)
  \,du
  \Bigg].
\end{split} \end{equation}
Next we show that the right-hand side is negative up to small remainder terms.
Note that for all $x\in[0,1]$ it holds that $0< \eta-\rho x\leq \eta$.
Together with the fact that for all $x\in(0,\infty)$ we have $\ln(x)\leq\sqrt{x}$, $\ln(x)\leq x$,
$\Big|\ln(x)\Big|\leq \sqrt{x}+\sqrt{\tfrac{1}{x}}$, and
$\Big|\ln(x)\Big|\leq x+\sqrt{\tfrac{1}{x}}$
and dropping negative terms,
this implies for all $t\in[0,\infty)$ and all $N,n\in\N$ that
\begin{equation} \begin{split}
  &\E\left[V_{\D_n}\left(\left(H_t^N, P_t^N, F_t^N\right)\right)\right]
  -\E\left[V_{\D_n}\left(\left(H_0^N, P_0^N, F_0^N\right)\right)\right]
  \\
  &\leq
  \E\Bigg[\int_0^t\sum_{i\in\D_n}\sigma_i\Bigg(
  \eta\kappa_H^N\sum_{j\in\D} m(i,j)H_u^N(j)
  +\eta h_\infty\left(F_u^N(i)\right)\kappa_H^N
  \\
  &\quad
  +\left(\eta-\rho F_u^N(i)\right)\left(H_u^N(i)-h_\infty\left(F_u^N(i)\right)\right)
  \left[\lambda\left(1-\tfrac{H_u^N(i)}{K}\right)
  -\delta P_u^N(i)\right]
  +\eta h_\infty\left(F_u^N(i)\right)\alpha^N
  +\eta\iota_H^N
  \\
  &\quad
  +\tfrac{\eta}{2} h_\infty\left(F_u^N(i)\right)\beta_H^N\tfrac{1}{H_u^N(i)}
  +\delta\kappa_P^N\sum_{j\in\D} m(i,j)P_u^N(j)
  +\delta p_\infty\left(F_u^N(i)\right)\kappa_P^N
  \\
  &\quad
  +\delta\left(P_u^N(i)-p_\infty\left(F_u^N(i)\right)\right)\left[-\nu-\gamma P_u^N(i)
  +\left(\eta-\rho F_u^N(i)\right)H_u^N(i)\right]
  +\delta\iota_P^N
  +\tfrac{\delta}{2}\tfrac{p_\infty\left(F_u^N(i)\right)}{P_u^N(i)}
  \beta_P^N
  \\
  &\quad
  +\rho \kappa_H^N
  \sum\limits_{j\in\D}m(i,j)H_u^N(j)
  +\rho H_u^N(i)\alpha^N
  +\rho \tfrac{h_\infty\left(F_u^N(i)\right)}{H_u^N(i)}
  \Bigg(1+\tfrac{H_u^N(i)}{h_\infty\left(F_u^N(i)\right)}
  \\
  &\quad
  +\sqrt{\tfrac{h_\infty\left(F_u^N(i)\right)}{H_u^N(i)}}\Bigg)
  \kappa_H^N
  \sum\limits_{j\in\D}m(i,j)H_u^N(j)
  +\rho h_\infty\left(F_u^N(i)\right)
  \tfrac{h_\infty\left(F_u^N(i)\right)}{H_u^N(i)}
  \alpha^N
  +\Bigg[
  \eta h_\infty'\left(F_u^N(i)\right)
  \Bigg(\sqrt{\tfrac{h_\infty\left(F_u^N(i)\right)}{H_u^N(i)}}
  \\
  &\quad
  +\tfrac{H_u^N(i)}{h_\infty\left(F_u^N(i)\right)}
  \Bigg)
  -\delta p_\infty'\left(F_u^N(i)\right)
  \Bigg(\sqrt{\tfrac{P_u^N(i)}{p_\infty\left(F_u^N(i)\right)}}
  +\sqrt{\tfrac{p_\infty\left(F_u^N(i)\right)}{P_u^N(i)}}
  \Bigg)
  \Bigg]
  \kappa_H^N
  \sum\limits_{j\in\D}m(i,j)\tfrac{H_u^N(j)}{H_u^N(i)}
  \\
  &\quad
  +\left[
  \eta
   h_\infty'\left(F_u^N(i)\right)\tfrac{H_u^N(i)}{h_\infty\left(F_u^N(i)\right)}
  -\delta p_\infty'\left(F_u^N(i)\right)\sqrt{\tfrac{p_\infty\left(F_u^N(i)\right)}{P_u^N(i)}}
  \right]
  \alpha^N
  \\
  &\quad
  +\rho h_\infty'\left(F_u^N(i)\right)\tfrac{H_u^N(i)}{h_\infty\left(F_u^N(i)\right)}
  \tfrac{\beta_H^N}{H_u^N(i)}
  +\tfrac{\eta}{2}
  \Big(
   h_\infty''\left(F_u^N(i)\right)\tfrac{h_\infty\left(F_u^N(i)\right)}{H_u^N(i)}
  +\tfrac{\left( h_\infty'\left(F_u^N(i)\right)\right)^2}{h_\infty\left(F_u^N(i)\right)}
  \Big)
  \tfrac{\beta_H^N}{H_u^N(i)}
  \\
  &\quad  
  +\tfrac{\delta}{2}\left(
  - p_\infty''(F_u^N(i))\tfrac{P_u^N(i)}{p_\infty(F_u^N(i))}
  +\tfrac{\left( p_\infty'(F_u^N(i))\right)^2}{p_\infty(F_u^N(i))}
  \right)
  \tfrac{\beta_H^N}{H_u^N(i)}
  \Bigg)
  \,du
  \Bigg].
\end{split} \end{equation}
Using
\eqref{eq:estimate.R.1}, \eqref{eq:estimate.R.2}, \eqref{eq:estimate.R.3},
\eqref{eq:estimate.R.4}, \eqref{eq:estimate.R.5}, and \eqref{eq:estimate.R.6}
we get for all $t\in[0,\infty)$ and all $N,n\in\N$ that
\begin{equation} \begin{split}
  &\E\left[V_{\D_n}\left(\left(H_t^N, P_t^N, F_t^N\right)\right)\right]
  -\E\left[V_{\D_n}\left(\left(H_0^N, P_0^N, F_0^N\right)\right)\right]
  \\
  &\quad
  \leq
  \E\Bigg[\int_0^t\sum_{i\in\D}\sigma_i\Bigg(
  b^N32R_u^N(i)
  +\left(\eta-\rho F_u^N(i)\right)\left(H_u^N(i)-h_\infty\left(F_u^N(i)\right)\right)
  \Big[\lambda\left(1-\tfrac{H_u^N(i)}{K}\right)
  -\delta P_u^N(i)\Big]
  \\
  &\qquad\qquad
  +\delta\left(P_u^N(i)-p_\infty\left(F_u^N(i)\right)\right)
  \Big[-\nu-\gamma P_u^N(i)
  +\left(\eta-\rho F_u^N(i)\right)H_u^N(i)
  \Big] 
  \Bigg)
  \,du
  \Bigg].
\end{split} \end{equation}
Note that 
\eqref{eq:def.h_infty.p_infty}
implies for all $x\in[0,1]$ that
\begin{equation} \begin{split} \label{eq:calculation.lambda.nu}
  &\delta p_\infty(x)+\tfrac{\lambda}{K}h_\infty(x)-\lambda
  =\tfrac{\delta\lambda K(\eta-\rho x)-\delta\lambda\nu+\lambda \delta\nu+\lambda^2\gamma}{\lambda\gamma+\delta K(\eta-\rho x)}-\lambda
  =\tfrac{\delta K(\eta-\rho x)+\lambda\gamma}{\lambda\gamma+\delta K(\eta-\rho x)}\lambda-\lambda
  =0,
  \\
  &\nu-(\eta-\rho x)h_\infty(x)+\gamma p_\infty(x)
  =\nu-\tfrac{(\eta-\rho x)K\delta\nu+(\eta-\rho x)K\gamma\lambda-\gamma\lambda K(\eta-\rho x)+\gamma\lambda\nu}{\lambda\gamma+\delta K(\eta-\rho x)}
  =\nu-\tfrac{(\eta-\rho x)K\delta+\gamma\lambda}{\lambda\gamma+\delta K(\eta-\rho x)}\nu
  =0.
\end{split} \end{equation}
From \eqref{eq:calculation.lambda.nu}
we see that for all $t\in[0,\infty)$ and all $N,n\in\N$ it holds that
\begin{equation} \begin{split}
  &\E\left[V_{\D_n}\left(\left(H_t^N, P_t^N, F_t^N\right)\right)\right]
  -\E\left[V_{\D_n}\left(\left(H_0^N, P_0^N, F_0^N\right)\right)\right]
  \\
  &\leq
  \E\Bigg[\int_0^t\sum_{i\in\D}\sigma_i\Bigg(
  b^N32R_u^N(i)
  +\left(\eta-\rho F_u^N(i)\right)\left(H_u^N(i)-h_\infty\left(F_u^N(i)\right)\right)
  \Big[\lambda-\tfrac{\lambda}{K}\Big(H_u^N(i)-h_\infty\left(F_u^N(i)\right)\Big)
  \\
  &\quad
  -\delta \Big(P_u^N(i)-p_\infty\left(F_u^N(i)\right)\Big)
  -\lambda\Big]
  +\delta\left(P_u^N(i)-p_\infty\left(F_u^N(i)\right)\right)
  \Big[-\nu-\gamma \Big(P_u^N(i)-p_\infty\left(F_u^N(i)\right)\Big)
  \\
  &\quad
  +\left(\eta-\rho F_u^N(i)\right)\Big(H_u^N(i)-h_\infty\left(F_u^N(i)\right)\Big)
  +\nu\Big] 
  \Bigg)
  \,du
  \Bigg].
\end{split} \end{equation}
Hence, we obtain for every $N,n\in\N$ and every $t\in[0,\infty)$ that
\begin{equation} \begin{split}
  &\E\left[V_{\D_n}\left(H_t^N,P_t^N,F_t^N\right)\right]
  +
  \int_0^t
  \left(\eta-\rho\right)\tfrac{\lambda}{K}
  \E\left[\sum_{i\in\D_n}\sigma_i\left(H_u^N(i)-h_\infty\left(F_u^N(i)\right)\right)^2\right]
  \\
  &\enspace
  +\delta\gamma\E\left[\sum_{i\in\D_n}\sigma_i\left(P_u^N(i)-p_\infty\left(F_u^N(i)\right)\right)^2\right]
  \,du
  \leq
  \E\left[V_{\hat{\D}}\left(H_0^N,P_0^N,F_0^N\right)\right]
  +
  tb^N32
  \sup\limits_{M\in\N}
  \sup\limits_{u\in[0,\infty)}
  \E\left[\left\|R_u^M\right\|_\sigma\right].
\end{split} \end{equation}
Applying monotone convergence we now see that for every $N\in\N$ and every $t\in[0,\infty)$ we have
\begin{equation} \begin{split}
  &\E\left[V_{\hat{\D}}\left(H_t^N,P_t^N,F_t^N\right)\right]
  +
  \int_0^t
  \left(\eta-\rho\right)\tfrac{\lambda}{K}
  \E\left[\sum_{i\in\hat{\D}}\sigma_i\left(H_u^N(i)-h_\infty\left(F_u^N(i)\right)\right)^2\right]
  \\
  &\qquad
  +\delta\gamma\E\left[\sum_{i\in\hat{\D}}\sigma_i\left(P_u^N(i)-p_\infty\left(F_u^N(i)\right)\right)^2\right]
  \,du
  \\
  &
  =
  \lim_{n\to\infty}
  \Bigg(
  \E\left[V_{\D_n}\left(H_t^N,P_t^N,F_t^N\right)\right]
  +
  \int_0^t
  \left(\eta-\rho\right)\tfrac{\lambda}{K}
  \E\left[\sum_{i\in\D_n}\sigma_i\left(H_u^N(i)-h_\infty\left(F_u^N(i)\right)\right)^2\right]
  \\
  &\qquad
  +\delta\gamma\E\left[\sum_{i\in\D_n}\sigma_i\left(P_u^N(i)-p_\infty\left(F_u^N(i)\right)\right)^2\right]
  \,du
  \Bigg)
  \leq
  \E\left[V_{\hat{\D}}\left(H_0^N,P_0^N,F_0^N\right)\right]
  +
  tb^N
  c_0.
\end{split} \end{equation}
The set $\hat{\D}\subseteq\D$ was arbitrarily chosen and thus, this 
and \eqref{eq:bound.V0}
complete the proof of Theorem \ref{thm:bound.V}.
\end{proof}
\subsection{Convergence of relative frequency of
defenders
}
\subsubsection{A relative compactness condition}
For convenience of the reader, we restate Lemma 3.3 of
\citet{KlenkeMytnik2012}.
\begin{lemma} \label{lem:set.rel.compactness}
  Let $\D$ be a countable set,
  let $\sigma \in (0,\infty)^\D$ such that $\sum_{i\in\D}\sigma_i<\infty$,
  and let $l_\sigma^1:=\{z\in\R^\D\colon\|z\|_\sigma:=\sum_{i\in\D}\sigma_iz_i<\infty\}$.
  A subset $K\subseteq l_\sigma^1$ is relatively compact if and only if
  \begin{itemize}
    \item[(i)] $\sup_{x\in K}\|x\|_\sigma < \infty$
    \item[(ii)] for every $\eps\in(0,\infty)$ there exists a finite subset $\mathcal{E}\subseteq \D$ such that
      $\sup_{x\in K} \|x\1_{\D\setminus \mathcal{E}}\|_\sigma <\eps$.
  \end{itemize}
\end{lemma}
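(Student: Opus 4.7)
The plan is a standard Kolmogorov--Riesz-type argument adapted to the weighted $\ell^1$ space $l_\sigma^1$, proving the two implications separately.

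For the direction $(\mathrm{i})\wedge(\mathrm{ii})\Rightarrow$ relative compactness, I would take an arbitrary sequence $(x^n)_{n\in\N}\subseteq K$ and extract a limit point by a diagonal argument. Since $\D$ is countable and condition (i) gives $|x^n_i|\leq \sigma_i^{-1}\sup_{y\in K}\|y\|_\sigma<\infty$ for each fixed $i$, a Cantor diagonal extraction produces a subsequence $(x^{n_k})_{k\in\N}$ and some $x=(x_i)_{i\in\D}\in\R^\D$ with $x^{n_k}_i\to x_i$ for every $i\in\D$. Fatou's lemma for the counting measure weighted by $\sigma$ gives $\|x\|_\sigma\leq\liminf_k\|x^{n_k}\|_\sigma<\infty$, so $x\in l_\sigma^1$. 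To upgrade pointwise to norm convergence, fix $\eps\in(0,\infty)$, use (ii) to pick a finite $\mathcal{E}\subseteq\D$ with $\sup_n\|x^n\1_{\D\setminus\mathcal{E}}\|_\sigma<\eps/3$ (so also $\|x\1_{\D\setminus\mathcal{E}}\|_\sigma\leq\eps/3$ by Fatou), and split
\begin{equation*}
\|x^{n_k}-x\|_\sigma
\leq \sum_{i\in\mathcal{E}}\sigma_i|x^{n_k}_i-x_i|
+\|x^{n_k}\1_{\D\setminus\mathcal{E}}\|_\sigma
+\|x\1_{\D\setminus\mathcal{E}}\|_\sigma.
\end{equation*}
The first (finite) sum tends to $0$ by pointwise convergence on the finite set $\mathcal{E}$, so $\|x^{n_k}-x\|_\sigma<\eps$ eventually.

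For the converse, assume $K$ is relatively compact. Then $\bar K$ is compact, hence totally bounded and in particular bounded, which immediately yields (i). For (ii), given $\eps\in(0,\infty)$, cover $\bar K$ by finitely many $l_\sigma^1$-balls of radius $\eps/2$ with centers $y^1,\ldots,y^m\in l_\sigma^1$. Since each $y^j\in l_\sigma^1$, choose finite sets $\mathcal{E}_j\subseteq\D$ with $\|y^j\1_{\D\setminus\mathcal{E}_j}\|_\sigma<\eps/2$, set $\mathcal{E}:=\bigcup_{j=1}^m\mathcal{E}_j$ (still finite), and for any $x\in K$ pick $j$ with $\|x-y^j\|_\sigma<\eps/2$ to obtain
\begin{equation*}
\|x\1_{\D\setminus\mathcal{E}}\|_\sigma
\leq \|(x-y^j)\1_{\D\setminus\mathcal{E}}\|_\sigma+\|y^j\1_{\D\setminus\mathcal{E}_j}\|_\sigma
<\eps.
\end{equation*}

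The only delicate points are (a) verifying that pointwise boundedness on a countable index set suffices for a diagonal extraction, which is purely combinatorial, and (b) the uniform tail control in the forward direction, where one must be careful that the finite-cover centers can themselves be taken in $l_\sigma^1$ (they can, since $\bar K\subseteq l_\sigma^1$ is compact and thus totally bounded in $l_\sigma^1$ itself). No estimate here uses anything specific to the Lotka--Volterra setting, so the proof is self-contained within functional-analytic folklore; I expect no serious obstacle beyond bookkeeping the three $\eps/3$-type splits.
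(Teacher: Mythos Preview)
Your proof is correct. The paper does not actually prove this lemma; it merely restates Lemma~3.3 of Klenke and Mytnik~\cite{KlenkeMytnik2012} for the reader's convenience, so your self-contained diagonal-extraction argument supplies what the paper outsources to a citation.
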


\begin{lemma} \label{lem:family.rv.rel.compactness}
Let $(\Omega,\mathcal{F},\P)$ be a probability space,
let $\D$ be a countable set,
let $\sigma \in (0,\infty)^\D$ such that $\sum_{i\in\D}\sigma_i<\infty$,
let $l_\sigma^1:=\{z\in\R^\D\colon\|z\|_\sigma:=\sum_{i\in\D}\sigma_iz_i<\infty\}$,
let $E_2:=l_\sigma^1\cap [0,\infty)^\D$,
let $I$ be a set,
and let
$Z^i\colon \Omega\to E_2$, $i\in I$, be a family of random variables.
Assume that
$\sup_{i\in I} \E[\|Z^i\|_\sigma]<\infty$
and
$\inf_{\mathcal{S}\subseteq\D,|\mathcal{S}|<\infty}\sup_{i\in I}\sum_{k\in\D\setminus\mathcal{S}}\sigma_k\E[Z^i_k]=0$.
Then the family $\{Z^i:i\in I\}$ is relatively compact in $E_2$.
\begin{proof}
Fix $\eps\in(0,\infty)$.
For each $m\in\N$ by assumption there exists a finite set
$\mathcal{S}_{m,\eps}\subseteq\D$ such that
\begin{equation}
  \sup_{i\in I} \sum_{k\in\D\setminus\mathcal{S}_{m,\eps}}\sigma_k\E[Z^i_k]<\tfrac{\eps}{2m^2(m+1)}.
\end{equation}
Define the set $K_{\eps}\subseteq E_2$ by
\begin{equation}
  K_{\eps}:=
  \Bigg\{
  x\in E_2:
  \|x\|_\sigma
  \leq
  \tfrac{2\sup_{i\in I} \E[\|Z^i\|_\sigma]}{\eps},
  \sup_{m\in\N}\Big\{m\sum_{k\in\D\setminus\mathcal{S}_{m,\eps}}\sigma_k |x_k|\Big\}\leq 1
  \Bigg\}.
\end{equation}
Due to the Heine--Borel theorem we can apply Lemma \ref{lem:set.rel.compactness}
to obtain relative compactness of $K_{\eps}$.
By Markov's inequality we get
\begin{equation} \begin{split}
  &\sup_{i\in I}\P\Big[Z^i\notin \overline{K_{\eps}}\Big]
  \leq
  \sup_{i\in I}\P\Big[Z^i\notin K_{\eps}\Big]
  \\
  &\quad
  \leq
  \sup_{i\in I}\P\Big[\|Z^i\|_\sigma>\tfrac{2\sup_{j\in I} \E[\|Z^j\|_\sigma]}{\eps}\Big]
  +
  \sup_{i\in I}\sum_{m=1}^\infty\P\Big[\sum_{k\in\D\setminus\mathcal{S}_{m,\eps}}\sigma_k Z^i_k>\tfrac{1}{m}\Big]
  \\
  &\quad
  \leq
  \tfrac{\eps}{2\sup_{j\in I} \E[\|Z^j\|_\sigma]}\sup_{i\in I}\E\Big[\|Z^i\|_\sigma\Big]
  +
  \sum_{m=1}^\infty m\sup_{i\in I}\sum_{k\in\D\setminus\mathcal{S}_{m,\eps}}\sigma_k \E\Big[Z^i_k\Big]
  \leq
  \tfrac{\eps}{2}
  +
  \sum_{m=1}^\infty m\tfrac{\eps}{2m^2(m+1)}
  =\eps.
\end{split} \end{equation}
Since $\eps$ was arbitrarily chosen it follows that
$\{Z^i:i\in I\}$ is tight in $E_2$. Due to Prohorov's theorem
\citep[e.g., Theorem 3.2.2 in][]{EthierKurtz1986}
the claim follows.
\end{proof}
\end{lemma}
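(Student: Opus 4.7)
The plan is to reduce relative compactness of $\{Z^i : i \in I\}$ in $E_2$ to tightness of their laws via Prohorov's theorem. Since $l_\sigma^1$ is a separable Banach space and $E_2$ is closed in it, the reduction applies, and it suffices to construct, for each $\eps \in (0,\infty)$, a compact $K_\eps \subseteq E_2$ with $\sup_{i \in I} \P[Z^i \notin K_\eps] \leq \eps$. Lemma~\ref{lem:set.rel.compactness} pinpoints exactly the two properties such a $K_\eps$ must enforce: a uniform bound on the $\sigma$-norm and a uniformly finite tail.

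For the norm bound I would use the first hypothesis $C := \sup_{i \in I} \E[\|Z^i\|_\sigma] < \infty$ together with Markov's inequality to get $\sup_i \P[\|Z^i\|_\sigma > 2C/\eps] \leq \eps/2$, which will take care of condition (i). For condition (ii) I would leverage the second hypothesis to select, for every $m \in \N$, a finite set $\mathcal{S}_{m,\eps} \subseteq \D$ with
\begin{equation}
\sup_{i \in I}\sum_{k \in \D \setminus \mathcal{S}_{m,\eps}} \sigma_k \E[Z^i_k] < \tfrac{\eps}{2 m^2 (m+1)},
\end{equation}
and then define $K_\eps$ as the intersection of the norm ball $\{x \in E_2 : \|x\|_\sigma \leq 2C/\eps\}$ with the family of affine constraints $\sup_{m \in \N} m \sum_{k \in \D \setminus \mathcal{S}_{m,\eps}} \sigma_k x_k \leq 1$. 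Boundedness in $\sigma$-norm is then built in; and given any $\delta \in (0,\infty)$, picking $m > 1/\delta$ forces $\sup_{x \in K_\eps}\|x \1_{\D \setminus \mathcal{S}_{m,\eps}}\|_\sigma \leq 1/m < \delta$, which is exactly property (ii). Each defining condition is closed in $l_\sigma^1$ (the tail sums are continuous linear functionals on $E_2$), so $K_\eps$ is closed and hence compact by Lemma~\ref{lem:set.rel.compactness}.

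It then remains to control the probability of leaving $K_\eps$ by a union bound followed by Markov's inequality term by term, yielding
\begin{equation}
\sup_{i \in I} \P[Z^i \notin K_\eps] \leq \tfrac{\eps}{2} + \sum_{m=1}^{\infty} m \sup_{i \in I} \sum_{k \in \D \setminus \mathcal{S}_{m,\eps}} \sigma_k \E[Z^i_k] \leq \tfrac{\eps}{2} + \sum_{m=1}^{\infty} \tfrac{\eps}{2 m(m+1)} = \eps,
\end{equation}
using the telescoping identity $\sum_{m \geq 1} 1/(m(m+1)) = 1$. Tightness follows, and Prohorov's theorem (Theorem~3.2.2 in Ethier and Kurtz) delivers relative compactness. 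The only delicate point of the argument is the combinatorial bookkeeping: one must commit to a majorant $\eps/(2 m^2(m+1))$ summable even after multiplication by the extra factor $m$ produced by Markov's inequality applied to the event $\{m \sum_{k \notin \mathcal{S}_{m,\eps}} \sigma_k Z^i_k > 1\}$, and this choice has to be made \emph{before} the sets $\mathcal{S}_{m,\eps}$ are selected.
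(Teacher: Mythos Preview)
Your proof is correct and follows essentially the same approach as the paper: the same choice of thresholds $\eps/(2m^2(m+1))$, the same compact set $K_\eps$, and the same Markov-plus-union-bound estimate leading to the telescoping sum, followed by Prohorov's theorem. If anything, you supply slightly more justification for the compactness of $K_\eps$ (closedness of the defining constraints and the explicit verification of condition~(ii) in Lemma~\ref{lem:set.rel.compactness}) than the paper, which simply invokes Heine--Borel and Lemma~\ref{lem:set.rel.compactness}.
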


\subsubsection{Proof of Theorem \ref{thm:conv.freq.altruist}} \label{sec:convF_proof}
In our proof of Theorem \ref{thm:conv.freq.altruist} we will apply Lemma
\ref{lem:family.rv.rel.compactness} to show for every $T\in[0,\infty)$
that $\{H_{tN}^N\colon t\in[0,T],N\in\N\}$ is relatively compact in
$l_{\sigma}^1\cap[0,\infty)^{\mathcal{D}}$. The difficult part is the last condition
of Lemma
\ref{lem:family.rv.rel.compactness} which leads us to show for every $T\in(0,\infty)$
that $\sum_{i\in\mathcal{D}}\sup_{N\in\N,t\in[0,T]}\E[H_{tN}^N(i)]<\infty$.
The following lemma will allow us to prove this condition where supremum over $N$ is inside
the sum over the demes.
\begin{lemma} \label{lem:unif.pos.moments.H}
  Assume the setting of Section \ref{sec:convF_setting} and assume
  that for all $N\in\N$ we have
  $\sum_{i\in\D}\sigma_i\E\Big[H_0^N(i)\Big]<\infty$.
  For all $n\in\N$ denote by $m^n$ the $n$-fold matrix product of $m$.
  Then we get for all $t\in[0,\infty)$, all $i\in\D$, and all $N\in\N$ that
  \begin{equation} \begin{split}
  \E\left[H_t^N(i)\right]
  \leq
  \E\left[\sum_{j\in\D}\sum_{n=0}^\infty e^{-t\kappa_H^N}\tfrac{(t\kappa_H^N)^n}{n!}m^n(i,j)H_0^N(j)\right]
  +\tfrac{K}{2}\Big(1+\sqrt{1+\tfrac{4\bar{\iota}_H}{K\lambda}}\Big).
  \end{split} \end{equation}
\end{lemma}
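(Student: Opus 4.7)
The plan is to reduce the claim to a deterministic differential inequality for $h_i(t) := \E[H_t^N(i)]$ and then to compare $h$ pointwise with the explicit supersolution
\begin{equation*}
U_i(t) := K^* + \E\left[\sum_{j\in\D}\sum_{n=0}^\infty e^{-t\kappa_H^N}\tfrac{(t\kappa_H^N)^n}{n!} m^n(i,j) H_0^N(j)\right], \qquad K^* := \tfrac{K}{2}\left(1 + \sqrt{1+\tfrac{4\bar{\iota}_H}{K\lambda}}\right),
\end{equation*}
whose nontrivial part is the semigroup $(e^{t\kappa_H^N L}\E[H_0^N])_i$ of the continuous-time random walk on $\D$ with generator $\kappa_H^N L$, where $L := m - I$.

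First I take expectations in \eqref{eq:dH}. Localization via the stopping times $\tau_R := \inf\{t : H_t^N(i) > R\}$, together with monotone convergence and the non-negativity of $H^N$, turns the stochastic integral into a true martingale and yields
\begin{equation*}
h_i(t) = h_i(0) + \int_0^t \kappa_H^N (Lh(s))_i + \lambda h_i(s) - \tfrac{\lambda}{K}\E\!\left[(H_s^N(i))^2\right] - \alpha^N \E[F_s^N(i) H_s^N(i)] - \delta\E[P_s^N(i) H_s^N(i)] + \iota_H^N\,ds.
\end{equation*}
Since $F \in [0,1]$ and $P, H \geq 0$, dropping the $\alpha$- and $\delta$-terms, using Jensen's inequality $\E[(H_s^N(i))^2] \geq h_i(s)^2$, and $\iota_H^N \leq \bar{\iota}_H$, yields
\begin{equation*}
h_i'(t) \leq \kappa_H^N (Lh(t))_i + \bar\psi(h_i(t)), \qquad \bar\psi(x) := \lambda x - \tfrac{\lambda}{K}x^2 + \bar{\iota}_H.
\end{equation*}

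Next I verify that $U$ is a supersolution of this inequality. Since $L$ annihilates constants, $U_i'(t) = \kappa_H^N (L U(t))_i$. By construction, $K^*$ is the larger root of $\bar\psi$ and $\bar\psi$ is decreasing on $[K^*, \infty)$; since $U_i(t) \geq K^* \geq K$ (the semigroup preserves non-negativity), one has $\bar\psi(U_i(t)) \leq 0$, so $U_i'(t) \geq \kappa_H^N (LU(t))_i + \bar\psi(U_i(t))$. Setting $V_i(t) := (h_i(t) - U_i(t))^+$, one has $V_i(0) = (-K^*)^+ = 0$. On the set $\{h_i(t) > U_i(t)\}$, the estimate $h_i(t) + U_i(t) > 2K^* \geq K$ gives $\bar\psi(h_i) - \bar\psi(U_i) = (\lambda - \tfrac{\lambda}{K}(h_i + U_i))(h_i - U_i) \leq 0$, while $\1_{\{h_i > U_i\}} L(h-U)_i \leq (LV)_i$. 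Smoothing $x \mapsto x^+$ in the usual way yields $V_i'(t) \leq \kappa_H^N (LV(t))_i$. Weighting by $\sigma$ and invoking \eqref{eq:bound.sumi.sigmai.mij} gives $\tfrac{d}{dt}\|V(t)\|_\sigma \leq \kappa_H^N(c-1)\|V(t)\|_\sigma$, so Gr\"onwall forces $\|V(t)\|_\sigma \equiv 0$, and since $\sigma_i > 0$ for every $i \in \D$, $h_i(t) \leq U_i(t)$ pointwise, which is exactly the claim.

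The main obstacle will be justifying that the weighted sums $\|h(t)\|_\sigma$ and $\|V(t)\|_\sigma$ are finite so that the Gr\"onwall step is rigorous. A preliminary Gr\"onwall argument applied to the cruder inequality $\tfrac{d}{dt}\|h(t)\|_\sigma \leq (\kappa_H^N(c-1) + \lambda)\|h(t)\|_\sigma + \iota_H^N\|\underline{1}\|_\sigma$ (obtained by dropping the $-\tfrac{\lambda}{K}h^2$ term and using \eqref{eq:bound.sumi.sigmai.mij}) together with the assumption $\sum_i \sigma_i \E[H_0^N(i)] < \infty$ shows $\|h(t)\|_\sigma < \infty$ on compact time intervals, hence $h_i(t) < \infty$ for every $i \in \D$; demes $i$ where $U_i(t) = \infty$ need no argument since then $V_i(t) = 0$ automatically.
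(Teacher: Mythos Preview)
Your approach is genuinely different from the paper's: you work directly with the deterministic system $h_i(t)=\E[H_t^N(i)]$ and compare it with the explicit supersolution $U=K^*+e^{t\kappa_H^N L}h(0)$ via a maximum principle, whereas the paper uses a \emph{duality trick}. Concretely, the paper fixes $i,t$ and considers the scalar process $Y_s^{N,t}(i):=\sum_{j}m_{(t-s)\kappa_H^N}(i,j)H_s^N(j)$; It\^o's formula makes the migration term cancel against the time-derivative of the backward semigroup, leaving the pathwise inequality $Y_s\le Y_0+\int_0^s\big(\lambda Y_u-\tfrac{\lambda}{K}\sum_j m_{\cdot}(i,j)H_u^N(j)^2+\iota_H^N\big)\,du+M_s$. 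Two applications of Jensen (first $\sum_j m_{\cdot}H_j^2\ge Y^2$, then $\E[\bar\psi(Y)]\le\bar\psi(\E Y)$) reduce everything to the scalar logistic ODE $z'=\bar\psi(z)$ with $z_0=\E[Y_0^{N,t}(i)]$, and $\E[H_t^N(i)]=\E[Y_t^{N,t}(i)]\le z_t\le z_0+K^*$.

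Your comparison argument is correct once one knows $h_i(t)<\infty$ and $\|h(t)\|_\sigma<\infty$, but your ``preliminary Gr\"onwall'' to secure this is circular. Localizing at $\tau_R=\inf\{t:H_t^N(i)>R\}$ controls $H^N(i)$ but not the migration input $\sum_j m(i,j)H_s^N(j)$; after Fatou and Tonelli you only obtain $g(t):=\|h(t)\|_\sigma\le g(0)+C\int_0^t g(s)\,ds+Dt$ as an inequality in $[0,\infty]$, and Gr\"onwall fails for $[0,\infty]$-valued functions (take $g\equiv\infty$ on $(0,\infty)$). The lemma assumes only $\sum_i\sigma_i\E[H_0^N(i)]<\infty$, so you cannot invoke second-moment bounds either. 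This is exactly the difficulty the duality sidesteps: since $Y^{N,t}_\cdot(i)$ is a \emph{single} real-valued process, one can localize at $\tau_l=\inf\{s:Y_s>l\}$, apply the honest scalar Gr\"onwall to $\E[Y_{s\wedge\tau_l}]\le l$, and pass to the limit via Fatou---no infinite-dimensional comparison is needed, and the second moment never appears because Jensen is applied pathwise before taking expectations.
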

\begin{proof}
  If $m$ is the identity matrix, then the
  assertion follows essentially from Gronwall's lemma
  since the drift grows linearly in upward direction.
  In the general case we average over space with the migration semigroup
  \eqref{eq:migration.semigroup} to get rid of the term involving $m$;
  cf.\ the cancelation in the second step of
  \eqref{eq:bound.Y}.
  We have for every $n\in\N$ and every $i,j\in\D$ that $m^n(i,j)\in[0,1]$.
  Hence, we get for all $T\in[0,\infty)$ and all $i,j\in\D$ that
  \begin{equation} \label{eq:m_t.with.sup.bounded}
  \sum_{n=0}^\infty\sup_{t\in[0,T]}e^{-t}\tfrac{t^n}{n!}m^n(i,j)<\infty.
  \end{equation}
  Thereby, for all $t\in[0,\infty)$ and all $i,j\in\D$ we can define
  \begin{equation}\label{eq:migration.semigroup}
  m_t(i,j):=\sum_{n=0}^\infty e^{-t}\tfrac{t^n}{n!}m^n(i,j).
  \end{equation}
  By \eqref{eq:m_t.with.sup.bounded} and using dominated convergence, we can compute
  for all $t\in[0,\infty)$ and all $i,j\in\D$ that
  \begin{equation} \begin{split} \label{eq:dmt}
  \tfrac{d}{dt}m_t(i,j)
  &=-m_t(i,j)+\sum_{n=1}^\infty e^{-t}\tfrac{t^{n-1}}{(n-1)!}m^n(i,j)
  =-m_t(i,j)+\sum_{n=0}^\infty e^{-t}\tfrac{t^{n}}{n!}m^{n+1}(i,j)
  \\
  &=-m_t(i,j)+\sum_{n=0}^\infty e^{-t}\tfrac{t^{n}}{n!}\sum_{k\in\D}m^n(i,k)m(k,j)
  =\sum_{k\in\D}m_t(i,k)(m(k,j)-\1_{j=k}).
  \end{split} \end{equation}
  Furthermore, note that for all $t\in[0,\infty)$ and all $i\in\D$ we have
  \begin{equation} \label{eq:m_s.sums.to.one}
  \sum_{j\in\D}m_t(i,j)
  =\sum_{j\in\D}\sum_{n=0}^\infty e^{-t}\tfrac{t^n}{n!}m^n(i,j)
  =\sum_{n=0}^\infty e^{-t}\tfrac{t^n}{n!}
  =1.
  \end{equation}
  For all $t\in[0,\infty)$, $s\in[0,t]$, $i\in\D$, $N\in\N$ define
  \begin{equation}
  Y_s^{N,t}(i):=\sum_{j\in\D}m_{(t-s)\kappa_H^N}(i,j)H_s^N(j).
  \end{equation}
  Observe that since for all $i,j\in\D$ it holds that $m_0(i,j)=\1_{i=j}$
  we have for all $t\in[0,\infty)$, all $i\in\D$, and all $N\in\N$ that
  \begin{equation} \label{eq:Yt=Ht}
  Y_t^{N,t}(i)=H_t^N(i).
  \end{equation}
  Furthermore, using \eqref{eq:bound.sumi.sigmai.mij} we have for all $t\in[0,\infty)$ and all $N\in\N$ that
  \begin{equation} \begin{split} \label{eq:bound.norm.Y0}
  &\sum_{i\in\D}\sigma_i\E\left[Y_0^{N,t}(i)\right]
  =
  \sum_{i\in\D}\sigma_i\E\left[\sum_{j\in\D}m_{t\kappa_H^N}(i,j)H_0^N(j)\right]
  =
  \sum_{i\in\D}\sigma_i\sum_{j\in\D}\sum_{n=0}^\infty e^{-t\kappa_H^N}\tfrac{(t\kappa_H^N)^n}{n!}m^n(i,j)\E\left[H_0^N(j)\right]
  \\
  &\quad
  =
  \sum_{j\in\D}\sum_{n=0}^\infty e^{-t\kappa_H^N}\tfrac{(t\kappa_H^N)^n}{n!}\E\left[H_0^N(j)\right]\sum_{i\in\D}\sigma_im^n(i,j)
  \leq
  \sum_{j\in\D}\sum_{n=0}^\infty e^{-t\kappa_H^N}\tfrac{(t\kappa_H^N)^n}{n!}\E\left[H_0^N(j)\right]c^n\sigma_j
  \\
  &\quad
  =
  \sum_{j\in\D}\sum_{n=0}^\infty e^{-t\kappa_H^N}\tfrac{(t\kappa_H^Nc)^n}{n!}\E\left[H_0^N(j)\right]\sigma_j
  =
  e^{t\kappa_H^N(c-1)}\E\left[\left\|H_0^N\right\|_{\sigma}\right].
  \end{split} \end{equation}
  For all $t\in[0,\infty)$, $s\in[0,t]$, $i,j\in\D$, $N\in\N$ we see from \eqref{eq:dmt} that we have 
  \begin{equation} \label{eq:dm(t-s)kappa}
  \tfrac{d}{ds}m_{(t-s)\kappa_H^N}(i,j)=-\kappa_H^N\sum_{k\in\D}m_{(t-s)\kappa_H^N}(i,k)(m(k,j)-\1_{j=k}).
  \end{equation}
  For $t\in[0,\infty)$, $N, l\in\N$, $i\in\D$ define
  \begin{equation}
  \tau_l^{N,t}(i):=\inf\left(\left\{u\in[0,t]:Y_u^{N,t}(i)>l\right\}\cup\infty\right).
  \end{equation}
  Using the fact that
  for all $t\in[0,\infty)$, all $u\in[0,t]$, all $N\in\N$, and all $i,j\in\D$ we have
  $m_{(t-u)\kappa_H^N}(i,j)\in[0,1]$
  we get for all $t\in[0,\infty)$, all $s\in[0,t]$, all $N, l\in\N$, and all $i\in\D$ that
  \begin{equation} \begin{split} \label{eq:quadr.var.Y.bounded}
  &\int_0^{s\land\tau_l^{N,t}(i)}\sum_{j\in\D}\left(m_{(t-u)\kappa_H^N}(i,j)\sqrt{\beta_H^NH_u^N(j)}\right)^2\,du
  \leq
  \int_0^{s\land\tau_l^{N,t}(i)}\sum_{j\in\D}m_{(t-u)\kappa_H^N}(i,j)\beta_H^NH_u^N(j)\,du
  \\
  &\qquad
  =
  \int_0^{s\land\tau_l^{N,t}(i)}\beta_H^NY_u^{N,t}(i)\,du
  \leq
  \int_0^s\beta_H^NY_{u\land\tau_l^{N,t}(i)}^{N,t}(i)\,du
  \leq
  t\beta_H^Nl.
  \end{split} \end{equation}
  For all $t\in[0,\infty)$, $s\in[0,t]$, $i\in\D$, $N\in\N$
  using It\^o's lemma with \eqref{eq:m_s.sums.to.one} and \eqref{eq:dm(t-s)kappa}
  we get $\P$-a.s.
  \begin{equation} \begin{split} \label{eq:bound.Y}
  &Y_s^{N,t}(i)-Y_0^{N,t}(i)
  =
  \int_0^s\sum_{j\in\D}m_{(t-u)\kappa_H^N}(i,j)
  \Bigg(\kappa_H^N\sum_{k\in\D}m(j,k)H_u^N(k)
  +\left(\lambda -\kappa_H^N-\alpha^NF_t^N(j)\right) H_u^N(j)
  \\
  &\quad
  -\tfrac{\lambda}{K}\left(H_u^N(j)\right)^2
  -\delta H_u^N(j)P_u^N(j)+\iota_H^N\Bigg)
  -\sum_{j\in\D}\kappa_H^N\sum_{k\in\D}m_{(t-u)\kappa_H^N}(i,k)(m(k,j)-\1_{j=k})H_u^N(j)\,du
  \\
  &\quad
  +\sum_{j\in\D}\int_0^sm_{(t-u)\kappa_H^N}(i,j)\sqrt{\beta_H^NH_u^N(j)}\,dW_u^{N,H}(j)
  \\
  &=
  \int_0^s\sum_{j\in\D}m_{(t-u)\kappa_H^N}(i,j)
  \Bigg(
  \left(\lambda -\alpha^NF_t^N(j)\right) H_u^N(j)
  -\tfrac{\lambda}{K}\left(H_u^N(j)\right)^2
  -\delta H_u^N(j)P_u^N(j)\Bigg)+\iota_H^N
  \,du
  \\
  &\quad
  +\sum_{j\in\D}\int_0^sm_{(t-u)\kappa_H^N}(i,j)\sqrt{\beta_H^NH_u^N(j)}\,dW_u^{N,H}(j)
  \\
  &\leq
  \int_0^s\sum_{j\in\D}m_{(t-u)\kappa_H^N}(i,j)
  \Bigg(
  \lambda H_u^N(j)
  -\tfrac{\lambda}{K}\left(H_u^N(j)\right)^2\Bigg)
  +\iota_H^N
  \,du
  \\
  &\quad
  +\sum_{j\in\D}\int_0^sm_{(t-u)\kappa_H^N}(i,j)\sqrt{\beta_H^NH_u^N(j)}\,dW_u^{N,H}(j).
  \end{split} \end{equation}
  Thus, using \eqref{eq:quadr.var.Y.bounded} and \eqref{eq:bound.Y} we get for all $t\in[0,\infty)$, all $s\in[0,t]$, all $i\in\D$, and all $N, l\in\N$ that
  \begin{equation} \begin{split}
  \E\left[Y_{s\land\tau_l^{N,t}(i)}^{N,t}(i)\right]
  -
  \E\left[Y_0^{N,t}(i)\right]
  &\leq
  \E\left[\int_0^{s\land\tau_l^{N,t}(i)}\sum_{j\in\D}m_{(t-u)\kappa_H^N}(i,j)
  \lambda H_u^N(j)
  +\iota_H^N
  \,du\right]
  \\
  &\leq
  \E\left[\int_0^s\sum_{j\in\D}m_{(t-{u\land\tau_l^{N,t}(i)})\kappa_H^N}(i,j)
  \lambda H_{u\land\tau_l^{N,t}(i)}^N(j)
  +\iota_H^N
  \,du\right]
  \\
  &=\int_0^s\lambda\E\left[Y_{u\land\tau_l^{N,t}(i)}^{N,t}(i)\right]+\iota_H^N\,du
  \leq
  t\iota_H^N+\lambda\int_0^s\E\left[Y_{u\land\tau_l^{N,t}(i)}^{N,t}(i)\right]\,du.
  \end{split} \end{equation}
Now, using Gronwall's lemma
\citep[e.g., ][]{Klenke2008},
we get for all $t\in[0,\infty)$, all $s\in[0,t]$, all $i\in\D$, and all $N, l\in\N$ that
  \begin{equation} \begin{split} \label{eq:bound.stopped.Y}
  \E\left[Y_{s\land\tau_l^{N,t}(i)}^{N,t}(i)\right]
  &
  \leq
  \left(\E\left[Y_0^{N,t}(i)\right]+t\iota_H^N\right)e^{\lambda s}
  \leq
  \left(\E\left[Y_0^{N,t}(i)\right]+t\iota_H^N\right)e^{\lambda t}.
  \end{split} \end{equation}
  For all $t\in[0,\infty)$, $N\in\N$, $i\in\D$ the $\P$-a.s.~continuous paths of $\left(Y_u^{N,t}(i)\right)_{u\in[0,t]}$ imply
  $\P\Big[\sup_{u\in[0,t]}Y_u^{N,t}(i)<\infty\Big]=1$.
  Hence, we get for all $t\in[0,\infty)$, all $N\in\N$, and all $i\in\D$ that
  \begin{equation} \label{eq:tauY.to.infty}
  \P\left[\lim_{l\to\infty}\tau_l^{N,t}(i)=\infty\right]=1.
  \end{equation}
  Using the assumption that for all $N\in\N$ we have $\sum_{i\in\D}\sigma_i\E\Big[H_0^N(i)\Big]<\infty$
  together with
  \eqref{eq:Yt=Ht}, \eqref{eq:bound.norm.Y0}, \eqref{eq:bound.stopped.Y}, and \eqref{eq:tauY.to.infty}
  with Fatou's lemma we obtain for all $t\in[0,\infty)$ and all $N\in\N$ that
  \begin{equation} \begin{split} \label{eq:boundY.fix.t}
  \sum_{i\in\D}\sigma_i\E\left[H_t^N(i)\right]
  &=
  \sum_{i\in\D}\sigma_i\E\left[Y_t^{N,t}(i)\right]
  =
  \sum_{i\in\D}\sigma_i\E\left[\lim\limits_{l\to\infty}Y_{t\land\tau_l^{N,t}(i)}^{N,t}(i)\right]
  \leq
  \sum_{i\in\D}\sigma_i\liminf\limits_{l\to\infty}\E\left[Y_{t\land\tau_l^{N,t}(i)}^{N,t}(i)\right]
  \\
  &\leq
  \sum_{i\in\D}\sigma_i\liminf\limits_{l\to\infty}\left(\E\left[Y_0^{N,t}(i)\right]+t\iota_H^N\right)e^{\lambda t}
  =
  \sum_{i\in\D}\sigma_i\left(\E\left[Y_0^{N,t}(i)\right]+t\iota_H^N\right)e^{\lambda t}
  \\
  &\leq
  \left(e^{t\kappa_H^N(c-1)}\E\left[\left\|H_0^N\right\|_{\sigma}\right]+\sum_{i\in\D}\sigma_it\iota_H^N\right)e^{\lambda t}
  <
  \infty.
  \end{split} \end{equation}
  Using the fact that
  for all $t\in[0,\infty)$, all $u\in[0,t]$, all $N\in\N$, and all $i,j\in\D$ we have
  $m_{(t-u)\kappa_H^N}(i,j)\in[0,1]$
  this implies for all $t\in[0,\infty)$, all $s\in[0,t]$, all $N\in\N$, and all $i\in\D$, that
  \begin{equation} \begin{split}
  &\E\left[\sum_{j\in\D}\int_0^{s}\left(m_{(t-u)\kappa_H^N}(i,j)\sqrt{\beta_H^NH_u^N(j)}\right)^2\,du\right]
  =
  \int_0^{s}\E\left[\sum_{j\in\D}\left(m_{(t-u)\kappa_H^N}(i,j)\sqrt{\beta_H^NH_u^N(j)}\right)^2\right]\,du
  \\
  &\quad
  \leq
  \beta_H^N\int_0^{s}\E\left[\sum_{j\in\D}m_{(t-u)\kappa_H^N}(i,j)H_u^N(j)\right]\,du
  =
  \beta_H^N\int_0^{s}\E\left[Y_u^{N,t}(i)\right]\,du
  <\infty.
  \end{split} \end{equation}
  Thus, taking expectations in \eqref{eq:bound.Y} gives for all $t\in[0,\infty)$, all $s\in[0,t]$, all $i\in\D$, and all $N\in\N$ using Jensen's inequality
  \begin{equation} \begin{split}
  &\E\left[Y_s^{N,t}(i)\right]-\E\left[Y_0^{N,t}(i)\right]
  \leq
  \int_0^s
  \Bigg(
  \lambda \E\left[Y_u^{N,t}(i)\right]
  -\tfrac{\lambda}{K}\E\left[\sum_{j\in\D}m_{(t-u)\kappa_H^N}(i,j)\left(H_u^N(j)\right)^2\right]
  +\iota_H^N\Bigg)
  \,du
  \\
  &\quad
  \leq
  \int_0^s
  \Bigg(
  \lambda \E\left[Y_u^{N,t}(i)\right]
  -\tfrac{\lambda}{K}\E\left[\left(Y_u^{N,t}(i)\right)^2\right]
  +\iota_H^N\Bigg)
  \,du
  \\
  &\quad
  \leq
  \int_0^s
  \Bigg(
  \lambda \E\left[Y_u^{N,t}(i)\right]
  -\tfrac{\lambda}{K}\left(\E\left[Y_u^{N,t}(i)\right]\right)^2
  +\bar{\iota}_H\Bigg)
  \,du.
  \end{split} \end{equation}
  For $t\in[0,\infty)$, $i\in\D$, $N\in\N$
  let $z^{N,t}(i)\colon [0,\infty)\to\R$ be a process that for all $s\in[0,\infty)$ satisfies
  \begin{equation}
  z_s^{N,t}(i)=
  z_0^{N,t}(i)
  +\int_0^s
  \left(
  \lambda z_u^{N,t}(i)
  -\tfrac{\lambda}{K}\left(z_u^{N,t}(i)\right)^2
  +\bar{\iota}_H
  \right)
  \,du
  \end{equation}
  with
  $z_0^{N,t}(i)=\E\left[Y_0^{N,t}(i)\right]$
  where uniqueness follows from local Lipschitz continuity.
  Define $c_1:=\tfrac{K}{2}+\sqrt{\tfrac{K^2}{4}+\tfrac{K\bar{\iota}_H}{\lambda}}\in(0,\infty)$.
  Using classical comparison results from the theory of ODEs,
  the above computation shows that for all $N\in\N$, all $i\in\D$, and all $t\in[0,\infty)$ we have
  \begin{equation} \begin{split}
  \E\left[H_t^N(i)\right]
  &=
  \E\left[Y_t^{N,t}(i)\right]
  \leq
  z_t^{N,t}(i)
  \leq
  \max\left\{\E\left[Y_0^{N,t}(i)\right],\limsup\limits_{s\to\infty}z_s^{N,t}(i)\right\}
  =
  \max\left\{\E\left[Y_0^{N,t}(i)\right],c_1\right\}
  \\
  &\leq
  \E\left[Y_0^{N,t}(i)\right]+c_1
  =
  \E\left[\sum_{j\in\D}m_{t\kappa_H^N}(i,j)H_0^N(j)\right]+c_1
  \end{split} \end{equation}
  This finishes the proof of Lemma \ref{lem:unif.pos.moments.H}.
\end{proof}
  
\begin{proof}[Proof of Theorem \ref{thm:conv.freq.altruist}.]
The main steps of the proof are to show (see \eqref{eq:epsNsmall} below)
that the generator of
$F^N$ is close to $\mathcal{A}_1$ (defined in \eqref{eq:A1} below) as $N\to\infty$
and to prove with Theorem \ref{thm:sup.N.int.(H-h)^2.bounded} that
the host population is asymptotically immediately in equilibrium,
that is, $H^N$ is close to $h_{\infty}(F^N)$ for large $N\in\N$.
We will use stochastic averaging
\citep[see Theorem 2.1 in ][]{Kurtz1992}
to prove the result.
So we first check that all conditions of the aforementioned theorem are fulfilled.
Define $\beta_H:=\frac{\beta}{b}=\lim_{N\to\infty}N\beta_H^n$.
Note that $E_1=[0,1]^\D$ and $E_2=l_\sigma^1\cap[0,\infty)^\D$ are complete separable metric spaces.
Tychonoff's theorem implies that $E_1$ is compact.
Since for all $N\in\N$ and all $t\in[0,\infty)$ the random variable
$F_{tN}^N$ takes values in the compact space $E_1$,
the compact containment condition holds for
$\big\{\left(F_{tN}^N\right)_{t\in[0,\infty)}:N\in\N\big\}$.
We will now use Lemma \ref{lem:family.rv.rel.compactness} to show for each $T\in[0,\infty)$ that
the family $\left\{H_{tN}^N : t\in[0,T],N\in\N\right\}$ is relatively compact in $E_2$.
From Lemma \ref{lem:bound.norm.(H+P)^p}
and the assumption $\sup_{N\in\N}\E\Big[\Big\|\Big(H_0^N+P_0^N\Big)^4\Big]<\infty$ we see that
\begin{equation}
  \sup_{N\in\N}\sup_{t\in[0,\infty)}\E\left[\left\|H_{tN}^N\right\|_{\sigma}\right]<\infty.
\end{equation}
Define $\D_0:=\emptyset$ and
for all $n\in\N$
let $\D_n\subseteq \D$ be a set with $|\D_n|=\min\{n,|\D|\}$
and $\D_n\supseteq \D_{n-1}$.
Assume that $\D=\cup_{n\in\N}\D_n$.
Define $c_1:=\tfrac{K}{2}\Big(1+\sqrt{1+\tfrac{4\bar{\iota}_H}{K\lambda}}\Big)$.
From Lemma \ref{lem:unif.pos.moments.H} with the assumption that
$\sum_{i\in\D}\sup_{N\in\N}\sigma_i\E\Big[H_0^N(i)\Big]<\infty$
we get for all $T\in[0,\infty)$ that
\begin{equation} \begin{split}
  &\sum_{i\in\D}\sigma_i\sup\limits_{N\in\N}\sup\limits_{t\in[0,T]}\E\left[H_{tN}^N(i)\right]
  \leq
  \sum_{i\in\D}\sigma_i\sup\limits_{N\in\N}\sup\limits_{t\in[0,T]}
  \left(
  \sum_{j\in\D}\sum_{n=0}^\infty e^{-tN\kappa_H^N}\tfrac{(tN\kappa_H^N)^n}{n!}m^n(i,j)\E\left[H_0^N(j)\right]+c_1
  \right)
  \\
  &
  \leq
  \sum_{j\in\D} \sum_{n=0}^\infty\left(\sum_{i\in\D}\sigma_im^n(i,j)\right)\sup\limits_{N\in\N}\sup\limits_{t\in[0,TN\kappa_H^N]}
  e^{-t}\tfrac{t^n}{n!}\E\left[H_0^N(j)\right]+c_1\sum_{i\in\D}\sigma_i
  \\
  &
  \leq
  \sum_{j\in\D}\sum_{n=0}^\infty c^n\sigma_j\sup\limits_{N\in\N}
  \tfrac{(TN\kappa_H^N)^n}{n!}\E\left[H_0^N(j)\right]+c_1\|\underline{1}\|_\sigma
  \leq
  e^{cT\sup\limits_{M\in\N}M\kappa_H^M}\sum_{j\in\D}\sigma_j\sup\limits_{N\in\N}
  \E\left[H_0^N(j)\right]+c_1\|\underline{1}\|_\sigma
  <
  \infty.
\end{split} \end{equation}
Now we can use the dominated convergence theorem to obtain for all $T\in[0,\infty)$ that
\begin{equation} \begin{split}
  &\lim_{n\to\infty}\sup_{N\in\N}\sup_{t\in[0,T]}\sum_{k\in\D\setminus \D_n}\sigma_k\E\Big[H_{tN}^N(k)\Big]
  \leq
  \lim_{n\to\infty}\sum_{k\in\D\setminus \D_n}\sup_{N\in\N}\sup_{t\in[0,T]}\sigma_k\E\Big[H_{tN}^N(k)\Big]
  =0.
\end{split} \end{equation}
Hence, for all $T\in[0,\infty)$
we can apply Lemma \ref{lem:family.rv.rel.compactness} to the family
$\left\{H_{tN}^N : t\in[0,T],N\in\N\right\}$
and conclude that it is relatively compact in $E_2$.
Denote by $C_b(E_1,\mathbb{R})$ the set of bounded, continuous real-valued functions on $E_1$ and by
$C_b^2(E_1,\mathbb{R})$ the set of all real-valued functions on $E_1$
that are twice continuously differentiable and bounded, with bounded first and second order partial derivatives.
For $f\in C_b^2(E_1,\mathbb{R})$ let $c_f\in (0,\infty)$ be such that for all $x\in E_1$ and all $i\in\D$
we have $\Big|\tfrac{df}{dx_i}(x)\Big|+\Big|\tfrac{d^2f}{dx_i^2}(x)\Big|\leq c_f$.
Define
\begin{equation}
  \operatorname{Dom}(\mathcal{A})
  :=\left\{f\in C_b^2(E_1,\mathbb{R}):f \text{ depends only on finitely many coordinates}\right\}
\end{equation}
and for any $f\in\operatorname{Dom}(\mathcal{A})$ denote by $\D_f$ the finite set of coordinates that $f$ depends on.
Due to the Stone--Weierstrass theorem \citep[see e.g.][Theorem 15.2]{Klenke2008} we see that
$\operatorname{Dom}(\mathcal{A})$ is dense in $C_b(E_1,\mathbb{R})$ in the topology of uniform convergence.
Denote by $C(E_1\times E_2,\mathbb{R})$ the set of real-valued continuous functions on $E_1\times E_2$
and define the operator $\mathcal{A}_1:\operatorname{Dom}(\mathcal{A})\to C(E_1\times E_2,\mathbb{R})$
for all $f\in\operatorname{Dom}(\mathcal{A})$, all $x\in E_1$, and all $y\in E_2$ by
\begin{equation} \begin{split}\label{eq:A1}
  \left(\mathcal{A}_1f\right)(x,y):=
  &
  \sum\limits_{i\in\D}
  \1_{y_i>0}
  \Bigg(
  \Big[\kappa_H\sum\limits_{j\in\D}\left(m(i,j)
  \tfrac{y_j}{y_i}(x_j-x_i)\right)
  -\alpha x_i(1-x_i)\Big]
  \tfrac{df}{dx_i}(x)
  +\tfrac{1}{2}
  \beta_H \tfrac{x_i(1-x_i)}{y_i}
  \tfrac{d^2f}{dx_i^2}(x)
  \Bigg).
\end{split} \end{equation}
For all $f\in\operatorname{Dom}(\mathcal{A})$, all $N\in\N$, and all $t\in[0,\infty)$ define
\begin{equation} \begin{split}
  \eps_f^N(t)
  :=&
  \tfrac{1}{N}
  \int_0^t\left(\mathcal{A}_1f\right)\left(F_u^N,H_u^N\right)\,du
  -
  \sum_{i\in\D}
  \int_0^{t}
  \tfrac{df}{dx_i}(F_u^N)
  \Bigg[
  \kappa_H^N
  \sum\limits_{j\in\D}m(i,j)\left(F_u^N(j)-F_u^N(i)\right)\tfrac{H_u^N(j)}{H_u^N(i)}
  \\
  &\qquad
  -\alpha^NF_u^N(i)\left(1-F_u^N(i)\right)
  \Bigg]
  +
  \tfrac{1}{2}\tfrac{d^2f}{dx_i^2}(F_u^N)
  \tfrac{\beta_H^N F_u^N(i)\left(1-F_u^N(i)\right)}{H_u^N(i)}
  \,du.
\end{split} \end{equation}
We now show that this process vanishes on the evolutionary time scale
as $N\to\infty$.
From It\^o's lemma and Lemma \ref{lem:estimate.term.sum^p} we get for all $f\in\operatorname{Dom}(\mathcal{A})$, all $N\in\N$, and all $t\in[0,\infty)$ that $\P$-a.s.
\begin{equation} \begin{split}
  &f\left(F_t^N\right)
  -
  f\left(F_0^N\right)
  =
  \sum_{i\in\D}
  \int_0^t
  \tfrac{df}{dx_i}(F_u^N)
  \,dF_u^N(i)
  +
  \tfrac{1}{2}\sum_{i,j\in\D}
  \int_0^t
  \Big(\tfrac{d^2f}{dx_idx_j}(F_u^N)\Big)
  d\left<F^N(i),F^N(j)\right>_u
  \\
  &\quad
  =
  \sum_{i\in\D}
  \int_0^t
  \tfrac{df}{dx_i}(F_u^N)
  \Bigg[
  \kappa_H^N
  \sum\limits_{j\in\D}m(i,j)\left(F_u^N(j)-F_u^N(i)\right)\tfrac{H_u^N(j)}{H_u^N(i)}
  -\alpha^NF_u^N(i)\left(1-F_u^N(i)\right)
  \Bigg]
  \\
  &\qquad
  +
  \tfrac{1}{2}\tfrac{d^2f}{dx_i^2}(F_u^N)
  \tfrac{\beta_H^N F_u^N(i)\left(1-F_u^N(i)\right)}{H_u^N(i)}
  \,du
  +
  \sum_{i\in\D}
  \int_0^t
  \tfrac{df}{dx_i}(F_u^N)
  \sqrt{\tfrac{\beta_H^N F_u^N(i)\left(1-F_u^N(i)\right)}{H_u^N(i)}}
  \,dW_u^{F,N}(i).
\end{split} \end{equation}
Hence, we get for all $f\in\operatorname{Dom}(\mathcal{A})$, all $N\in\N$, and all $t\in[0,\infty)$ that $\P$-a.s.
\begin{equation} \begin{split} \label{eq:martingale.term.f}
  &f\left(F_{tN}^N\right)
  -\int_0^t\left(\mathcal{A}_1f\right)\left(F_{uN}^N,H_{uN}^N\right)\,du
  +\eps_f^N(tN)
  \\
  &\quad
  =
  f\left(F_0^N\right)
  +
  \sum_{i\in\D}
  \int_0^{tN}
  \tfrac{df}{dx_i}(F_{u}^N)
  \sqrt{\tfrac{\beta_H^N F_{u}^N(i)\left(1-F_{u}^N(i)\right)}{H_{u}^N(i)}}
  \,dW_{u}^{F,N}(i).
\end{split} \end{equation}
From Tonelli's theorem and Lemma \ref{lem:bound.norm.1/H^2} we obtain for all $f\in\operatorname{Dom}(\mathcal{A})$, all $N\in\N$, and all $t\in[0,\infty)$ that
\begin{equation} \begin{split}
  &\E\Bigg[\int_0^{tN}
  \Bigg(
  \sum_{i\in\D}
  \tfrac{df}{dx_i}(F_u^N)
  \sqrt{\tfrac{\beta_H^N F_u^N(i)\left(1-F_u^N(i)\right)}{H_u^N(i)}}
  \Bigg)^2
  \,du\Bigg]
  \leq
  tN|\D_f|c_f^2\beta_H^N\max_{i\in\D_f}\sup_{M\in\N}\sup_{u\in[0,\infty)}\E\Big[\tfrac{1}{H_u^M(i)}\Big]
  \\
  &\quad
  \leq
  t|\D_f|c_f^2N\beta_H^N\max_{i\in\D_f}\tfrac{1}{\sigma_i}\sup_{M\in\N}\sup_{u\in[0,\infty)}\E\Big[\Big\|\tfrac{1}{H_u^M}\Big\|_\sigma\Big]
  <\infty.
\end{split} \end{equation} 
Thus for all $f\in\operatorname{Dom}(\mathcal{A})$, all $N\in\N$, and all $t\in[0,\infty)$ the left-hand side of \eqref{eq:martingale.term.f} is a martingale.
Next, for all $f\in\operatorname{Dom}(\mathcal{A})$ and all $T\in[0,\infty)$ it holds that
\begin{equation} \begin{split}
  &\sup_{N\in\N}\E\left[\int_0^T\left|\left(\mathcal{A}_1f\right)\left(F_{tN}^N,H_{tN}^N\right)\right|^\frac{4}{3}\,dt\right]
  \\
  &\quad
  =
  \sup_{N\in\N}\E\Bigg[\int_0^T\Bigg|
  \sum\limits_{i\in\D_f}\Bigg(\kappa_H\sum_{j\in\D}\left(m(i,j)
  \tfrac{H_{tN}^N(j)}{H_{tN}^N(i)}\left(F_{tN}^N(j)-F_{tN}^N(i)\right)\right)
  \\
  &\qquad\qquad\qquad
  -\alpha F_{tN}^N(i)\left(1-F_{tN}^N(i)\right)\Bigg)
  \tfrac{df}{dx_i}\left(F_{tN}^N\right)
  +\tfrac{1}{2}
  \sum_{i\in\D_f}\beta_H \tfrac{F_{tN}^N(i)\left(1-F_{tN}^N(i)\right)}{H_{tN}^N(i)}
  \tfrac{d^2f}{dx_i^2}\left(F_{tN}^N\right)
  \Bigg|^{\frac{4}{3}}\,dt\Bigg]
  \\
  &\quad
  \leq
  \sup_{N\in\N}\E\Bigg[
  \int_0^T
  \Bigg(
  \sum\limits_{i\in\D_f}
  \Bigg(
  \Big|\kappa_H\sum_{j\in\D}\left(m(i,j)
  \tfrac{H_{tN}^N(j)}{H_{tN}^N(i)}\right)
  c_f
  \Big|
  +\Big|
  \alpha 
  c_f
  \Big|
  +\Big|
  \tfrac{1}{2}
  \beta_H \tfrac{1}{H_{tN}^N(i)}
  c_f\Big|
  \Bigg)
  \Bigg)^{\frac{4}{3}}
  \,dt\Bigg].
\end{split} \end{equation}
Using Young's inequality and Jensen's inequality we get for all $f\in\operatorname{Dom}(\mathcal{A})$ and all $T\in[0,\infty)$ that
\begin{equation} \begin{split}
  &\sup_{N\in\N}\E\left[\int_0^T\left|\left(\mathcal{A}_1f\right)\left(F_{tN}^N,H_{tN}^N\right)\right|^\frac{4}{3}\,dt\right]
  \\
  &\quad
  \leq
  \sup_{N\in\N}\E\Bigg[
  \int_0^T
  \Bigg(
  \sum\limits_{i\in\D_f}
  \Bigg(
  \tfrac{2}{3}
  \Big(\kappa_Hc_f\tfrac{1}{H_{tN}^N(i)}\Big)^{\frac{3}{2}}
  +
  \tfrac{1}{3}
  \Big(\sum_{j\in\D}m(i,j)H_{tN}^N(j)\Big)^3
  +\alpha c_f
  +
  \tfrac{1}{2}
  \beta_H \tfrac{1}{H_{tN}^N(i)}
  c_f
  \Bigg)
  \Bigg)^{\frac{4}{3}}
  \,dt\Bigg]
  \\
  &\quad
  \leq
  \sup_{N\in\N}\E\Bigg[
  \int_0^T
  \tfrac{(4\left|\D_f\right|)^{\frac{1}{3}}}{\min_{k\in\D_f}\{\sigma_k\}}
  \sum\limits_{i\in\D_f}
  \sigma_i
  \Bigg(
  \Big(\tfrac{2}{3}\Big)^{\frac{4}{3}}
  \Big(\kappa_Hc_f\tfrac{1}{H_{tN}^N(i)}\Big)^2
  +
  \Big(\tfrac{1}{3}\Big)^{\frac{4}{3}}
  \Big(\sum_{j\in\D}m(i,j)H_{tN}^N(j)\Big)^4
  \\
  &\qquad\qquad\qquad
  +
  \Big(\alpha c_f\Big)^\frac{4}{3}
  +
  \Big(\tfrac{1}{2}\beta_H \tfrac{1}{H_{tN}^N(i)}c_f\Big)^{\frac{4}{3}}\Bigg)
  \,dt\Bigg].
\end{split} \end{equation}
Using Lemma \ref{lem:estimate.term.sum^p}, Tonelli's theorem, and Lemmas
\ref{lem:bound.norm.(H+P)^p} and \ref{lem:bound.norm.1/H^2} we obtain
for all $f\in\operatorname{Dom}(\mathcal{A})$ and all $T\in[0,\infty)$ that
\begin{equation} \begin{split}
  &\sup_{N\in\N}\E\left[\int_0^T\left|\left(\mathcal{A}_1f\right)\left(F_{tN}^N,H_{tN}^N\right)\right|^\frac{4}{3}\,dt\right]
  \\
  &\quad
  \leq
  \sup_{N\in\N}
  \tfrac{(4\left|\D_f\right|)^{\frac{2}{3}}}{\min_{k\in\D_f}\{\sigma_k\}}
  \int_0^T
  \Big(\tfrac{4}{9}\Big)^{\frac{2}{3}}
  \left(\kappa_Hc_f\right)^2
  \E\Bigg[\left\|\tfrac{1}{\left(H_{tN}^N\right)^2}\right\|_\sigma\Bigg]
  +
  \Big(\tfrac{1}{9}\Big)^{\frac{2}{3}}c
  \E\Bigg[\left\|\Big(H_{tN}^N\Big)^4\right\|_\sigma
  +
  \left(\alpha c_f\right)^\frac{4}{3}\|\underline{1}\|_\sigma\Bigg]
  \\
  &\qquad\qquad\qquad
  +
  \Big(\tfrac{1}{4}\Big)^{\frac{2}{3}}
  \left(\beta_H c_f\right)^\frac{4}{3}
  \E\Bigg[\left\|\Big(\tfrac{1}{H_{tN}^N}\Big)^{\frac{4}{3}}\right\|_\sigma\Bigg]
  \,dt
  <\infty.
\end{split} \end{equation}
Furthermore, for all $f\in\operatorname{Dom}(\mathcal{A})$, all $N\in\N$, and all $T\in[0,\infty)$ we have that
\begin{equation} \begin{split}
  &\E\left[\sup_{t\in[0,T]}\Big|\eps_f^N(tN)\Big|\right]
  \\
  &\quad
  =
  \E\Bigg[\sup_{t\in[0,T]}\Bigg|
  \sum_{i\in\D_f}
  \int_0^{t}
  \tfrac{df}{dx_i}(F_{uN}^N)
  \Big[
  \left(\kappa_H-N\kappa_H^N\right)
  \sum\limits_{j\in\D}m(i,j)\left(F_{uN}^N(j)-F_{uN}^N(i)\right)\tfrac{H_{uN}^N(j)}{H_{uN}^N(i)}
  \\
  &\qquad
  -
  \left(\alpha-N\alpha^N\right)F_{uN}^N(i)\left(1-F_{uN}^N(i)\right)
  \Big]
  +
  \tfrac{1}{2}\tfrac{d^2f}{dx_i^2}(F_{uN}^N)
  \left(\beta_H-N\beta_H^N\right) \tfrac{F_{uN}^N(i)\left(1-F_{uN}^N(i)\right)}{H_{uN}^N(i)}
  \,du
  \Bigg|\Bigg]
  \\
  &\quad
  \leq
  \E\Bigg[
  \int_0^T
  \sum_{i\in\D_f}
  c_f
  \Big(
  \left|\kappa_H-N\kappa_H^N\right|
  \sum\limits_{j\in\D}m(i,j)\tfrac{H_{uN}^N(j)}{H_{uN}^N(i)}
  +\left|\alpha-N\alpha^N\right|
  +\tfrac{1}{2}\left|\beta_H-N\beta_H^N\right|\tfrac{1}{H_{uN}^N(i)}
  \Big)
  \,du\Bigg].
\end{split} \end{equation}
Using Young's inequality, Lemma \ref{lem:estimate.term.sum^p}, and Tonelli's theorem we
get for all $f\in\operatorname{Dom}(\mathcal{A})$, all $N\in\N$, and all $T\in[0,\infty)$ that
\begin{equation} \begin{split}
  \E\left[\sup_{t\in[0,T]}\Big|\eps_f^N(tN)\Big|\right]
  &\leq
  \E\Bigg[
  \int_0^T
  \sum_{i\in\D_f}
  \tfrac{\sigma_i c_f}{\min_{k\in\D_f}\{\sigma_k\}}
  \Bigg(
  \tfrac{\left|\kappa_H-N\kappa_H^N\right|}{2}
  \Bigg(
  \Big(\tfrac{1}{H_{uN}^N(i)}\Big)^2
  +\Big(\sum\limits_{j\in\D}m(i,j)H_{uN}^N(j)\Big)^2
  \Bigg)
  \\
  &\qquad\qquad
  +\left|\alpha-N\alpha^N\right|
  +\tfrac{\left|\beta_H-N\beta_H^N\right|}{2}\tfrac{1}{H_{uN}^N(i)}
  \Bigg)
  \,du\Bigg]
  \\
  &
  \leq
  \tfrac{c_f}{\min_{k\in\D_f}\{\sigma_k\}}
  \int_0^T
  \tfrac{\left|\kappa_H-N\kappa_H^N\right|}{2}
  \Bigg(
  \E\left[\left\|\tfrac{1}{\left(H_{uN}^N\right)^2}\right\|_\sigma\right]
  +c\E\left[\left\|\left(H_{uN}^N\right)^2\right\|_\sigma\right]
  \Bigg)
  \\
  &\qquad\qquad
  +\left|\alpha-N\alpha^N\right|\|\underline{1}\|_\sigma
  +\tfrac{\left|\beta_H-N\beta_H^N\right|}{2}\E\left[\left\|\tfrac{1}{H_{uN}^N}\right\|_\sigma\right]
  \,du.
\end{split} \end{equation}
Hence, from Lemmas \ref{lem:bound.norm.(H+P)^p} and \ref{lem:bound.norm.1/H^2} we see
for all $f\in\operatorname{Dom}(\mathcal{A})$ and all $T\in[0,\infty)$ that
\begin{equation} \begin{split}\label{eq:epsNsmall}
  0
  &\leq
  \lim_{N\to\infty}\E\Bigg[\sup_{t\in[0,T]}\Big|\eps_f^N(tN)\Big|\Bigg]
  \\
  &\leq
  \lim_{N\to\infty}
  \tfrac{Tc_f}{\min_{k\in\D_f}\{\sigma_k\}}
  \Bigg(
  \tfrac{\left|\kappa_H-N\kappa_H^N\right|}{2}
  \sup_{M\in\N}
  \sup_{t\in[0,\infty)}
  \Big(
  \E\left[\left\|\tfrac{1}{\left(H_t^M\right)^2}\right\|_\sigma\right]
  +c\E\left[\left\|\left(H_t^M\right)^2\right\|_\sigma\right]
  \Big)
  +\left|\alpha-N\alpha^N\right|\|\underline{1}\|_\sigma
  \\
  &\qquad\qquad
  +\tfrac{\left|\beta_H-N\beta_H^N\right|}{2}
  \sup_{M\in\N}\sup_{t\in[0,\infty)}\E\left[\left\|\tfrac{1}{H_t^M}\right\|_\sigma\right]
  \Bigg)
  =
  0.
\end{split} \end{equation}
Next we show that the host population is asymptotically immediately in the equilibrium state.
Define the set
$\mathcal{R}:=\Big\{\bigtimes\limits_{i\in\D}B_i\colon (B_i)_{i\in\D}\subseteq\mathcal{B}([0,\infty)^\D),
B_i=[0,\infty) \text{ for all but finitely many } i\in\D\Big\}$.
For all $N\in\N$, all $t\in[0,\infty)$, and all $B\in\mathcal{R}$ define the measure-valued random variables
\begin{equation} \begin{split}
\Lambda^N([0,t]\times B)
:=&
\int_0^t\1_B\left(H_{uN}^N\right)\,du
=\int_0^t\prod\limits_{i\in\D}\1_{B_i}\left(H_{uN}^N(i)\right)\,du,
\end{split} \end{equation}
Due to Carath\'eodory's theorem \citep[see e.g.][Theorem 1.41]{Klenke2008} there is a unique extension of this pre-measure
to a measure on $[0,\infty)\times E_2$,
which we will denote by the same name.
Define the space
$\ell(E_2):=\{\mu \colon \mu \text{ is a measure on } [0,\infty)\times E_2 \text{ such that for all } t\in[0,\infty) \text{ it holds that } \mu([0,t]\times E_2)=t\}$
and the space $D([0,\infty)):=\{f\colon [0,\infty) \to E_1 | f \text{ is \cadlag}\}$.
Having checked all assumptions, we can now apply Theorem 2.1 from \cite{Kurtz1992}
and conclude that the sequence $\big\{\big(\big(F_{tN}^N\big)_{t\in[0,\infty)},\Lambda^N\big):N\in\N\big\}$ is relatively compact in
$D([0,\infty))\times \ell(E_2)$.
Let $(F, \Lambda)$ be a $D([0,\infty))\times \ell(E_2)$-valued random variable and let $\left(N_k\right)_{k\in\N}\subseteq\N$ be an increasing sequence such that
w-$\lim_{k\to\infty}\big(\big(F_{tN_k}^{N_k}\big)_{t\in[0,\infty)},\Lambda^{N_k}\big)=(F, \Lambda)$.
Due to Skorohod's representation theorem \citep[Theorem 3.1.8 of ][]{EthierKurtz1986} we can assume without loss of generality and for ease of notation that $(F,\Lambda)$
acts on the probability space $\left(\Omega,\mathcal{F},\P\right)$.
Using H\"older's inequality and Theorem \ref{thm:sup.N.int.(H-h)^2.bounded}
we see for all $t\in[0,\infty)$ that
\begin{equation} \begin{split} \label{eq:lim.int.H-h.0}
  0
  \leq&
  \lim_{N\to\infty}\int_0^t\E\left[\left\|H_{uN}^N-\left(h_\infty\left(F_{uN}^N(i)\right)\right)_{i\in\D}\right\|_{\sigma}\right]\,du
  =
  \lim_{N\to\infty}\int_0^t\E\left[\sum_{i\in\D}\sigma_i\left|H_{uN}^N(i)-h_\infty\left(F_{uN}^N(i)\right)\right|\right]\,du
  \\
  \leq&
  \lim_{N\to\infty}
  \sqrt{\int_0^t\E\left[\sum_{i\in\D}\sigma_i\left(H_{uN}^N(i)-h_\infty\left(F_{uN}^N(i)\right)\right)^2\right]\,du}
  \sqrt{t\sum_{k\in\D}\sigma_k}
  =0.
\end{split} \end{equation}
For any bounded Lipschitz continuous function $f\colon l_\sigma^1\to\R$, with Lipschitz constant $\bar{c}_f$,
and all $t\in[0,\infty)$, applying \eqref{eq:lim.int.H-h.0}, we then have
\begin{equation} \begin{split}
  0
  &\leq
  \E\Big[\Big|\int_0^t \int_{E_2} f(y)\,\Lambda(du\times dy)-\int_0^tf(h_\infty(F_u))\,du\Big|\Big]
  \\
  &=
  \lim_{k\to\infty}
  \E\Big[\Big|\int_0^t f\big(H_{uN_k}^{N_k}\big)\,du-\int_0^tf\big(h_\infty\big(F_{uN_k}^{N_k}\big)\big)\,du\Big|\Big]
  \leq
  \bar{c}_f \lim_{k\to\infty}
  \E\Big[\int_0^t \big\|H_{uN_k}^{N_k}-h_\infty\big(F_{uN_k}^{N_k}\big)\big\|_{\sigma}\,du\Big]
  =
  0.
\end{split} \end{equation}
This implies that $\Lambda(du\times dy)=\delta_{h_{\infty}(F_u)}(dy)\,du$
where $\delta_x$ is the Dirac measure on $x$.
Define the operator $\mathcal{A}_2:\operatorname{Dom}(\mathcal{A})\to C(E_1,\mathbb{R})$
for all $f\in\operatorname{Dom}(\mathcal{A})$ and all $x\in E_1$ by
\begin{equation} \begin{split}
  \left(\mathcal{A}_2 f\right)(x):=
  &
  \sum\limits_{i\in\D}\left(\kappa_H\sum\limits_{j\in\D}\left(m(i,j)
  \tfrac{a-x_i}{a-x_j}(x_j-x_i)\right)
  -\alpha x_i(1-x_i)\right)
  \tfrac{df}{dx_i}(x)
  \\
  &\quad
  +\tfrac{1}{2}
  \sum\limits_{i\in\D}\beta_H b(a-x_i)x_i(1-x_i)
  \tfrac{d^2f}{dx_i^2}(x).
\end{split} \end{equation}
Note
for all $f\in\operatorname{Dom}(\mathcal{A})$ and all $x\in E_1$
that 
  $\left(\mathcal{A}_2 f\right)(x)=\left(\mathcal{A}_1 f\right)(x,h_{\infty}(x))$.
Therefore, for all $t\in[0,\infty)$, all $f\in\operatorname{Dom}(\mathcal{A})$, and all $x\in E_1$
we have $\P$-a.s.
\begin{equation} \begin{split} \label{eq:average.A.over.y}
  &\int_0^t\int_{E_2} \left(\mathcal{A}_1f\right)(F_s,y)\Lambda(ds\times dy)
  =\int_0^t \left(\mathcal{A}_1f\right)(F_s,h_{\infty}(F_s))\,ds
  =
  \int_0^t\left(\mathcal{A}_2f\right)(F_s)\,ds.
\end{split} \end{equation}
Applying Theorem 2.1 of \cite{Kurtz1992} together with \eqref{eq:average.A.over.y},
we see for each $f\in\operatorname{Dom}(\mathcal{A})$ that
\begin{equation}
  \big(f(F_t)-\int_0^t\left(\mathcal{A}_2f\right)(F_s)ds\big)_{t\in[0,\infty)}
\end{equation}
is a martingale.
Hence, $F$ is a (weak) solution of \eqref{eq:X}.
It remains to prove uniqueness.
Note that for all $z_1,z_2\in[0,1]$ we have that
$\tfrac{a-z_1}{a-z_2}(z_2-z_1)=(a-z_1)(\tfrac{a-z_1}{a-z_2}-1)$.
Using this and \eqref{eq:bound.sumi.sigmai.mij} we then have
for any subset $\mathcal{S}\subseteq \D$ and any $x,y\in E_2$ that
\begin{equation} \begin{split}
  &\sum_{i\in\mathcal{S}}\sigma_i\1_{x_i\geq y_i}
  \Big(
  \kappa_H\sum_{j\in\D}m(i,j)\big(\tfrac{(a-x_i)^2}{a-x_j}-(a-x_i)-\tfrac{(a-y_i)^2}{a-y_j}+(a-y_i)\big)
  -\alpha(x_i(1-x_i)-y_i(1-y_i))
  \Big)
  \\
  &
  =
  \sum_{i\in\mathcal{S}}\sigma_i\1_{x_i\geq y_i}
  \Big(
  \kappa_H\sum_{j\in\D}m(i,j)\big((x_i-y_i)+((a-x_i)^2-(a-y_i)^2)\tfrac{1}{a-x_j}-(a-y_i)^2\big(\tfrac{1}{a-y_j}-\tfrac{1}{a-x_j}\big)\big)
  \\
  &\qquad\qquad
  +\alpha(-(x_i-y_i)+x_i^2-y_i^2)
  \Big)
  \\
  &
  \leq
  \sum_{i\in\mathcal{S}}\sigma_i
  \Big(
  \kappa_H\sum_{j\in\D}m(i,j)\1_{x_j\geq y_j}(a-y_i)^2\big(\tfrac{1}{a-x_j}-\tfrac{1}{a-y_j}\big)\Big)
  +\sum_{i\in\mathcal{S}}\sigma_i(\kappa_H+2\alpha)\1_{x_i\geq y_i}(x_i-y_i)
  \\
  &
  \leq
  \sum_{i\in\mathcal{S}}\sigma_i
  \Big(
  \kappa_H\sum_{j\in\D}m(i,j)\1_{x_j\geq y_j}\tfrac{a^2}{(a-1)^2}(x_j-y_j)\Big)
  +\sum_{i\in\mathcal{S}}\sigma_i(\kappa_H+2\alpha)\1_{x_i\geq y_i}(x_i-y_i)
  \\
  &
  \leq
  \sum_{i\in\mathcal{S}}\sigma_i
  c\kappa_H\1_{x_i\geq y_i}\tfrac{a^2(x_i-y_i)}{(a-1)^2}
  +\sum_{i\in\mathcal{S}}\sigma_i(\kappa_H+2\alpha)\1_{x_i\geq y_i}(x_i-y_i)
  =
  \sum_{i\in\mathcal{S}}\sigma_i
  \big(\tfrac{c\kappa_Ha^2}{(a-1)^2}+\kappa_H+2\alpha\big)(x_i-y_i)^+.
\end{split} \end{equation}
This implies that equation (26) of \cite{HutzenthalerWakolbinger2007} is fulfilled.
Together with the assumptions on $m$ in Assumption \ref{ass:conv_LotkaVolterra} we now infer, analogous to Proposition 2.1 of \cite{HutzenthalerWakolbinger2007},
that the system \eqref{eq:X} has a unique strong solution with a.s.~continuous paths.
We conclude that any limit point of $\big\{\left(F_{tN}^N\right)_{t\in[0,\infty)}:N\in\N\big\}$ solves \eqref{eq:X}.
Combining this with the fact that $\big\{\left(F_{tN}^N\right)_{t\in[0,\infty)}:N\in\N\big\}$ is relatively compact we obtain
$\left(F_{tN}^N\right)_{t\in[0,\infty)}\Longrightarrow\left(X_t\right)_{t\in[0,\infty)}$, as $N\to\infty$.
This finishes the proof of Theorem \ref{thm:conv.freq.altruist}.
\end{proof}

\section{McKean-Vlasov limit} \label{sec:mckeanvlasov}
In this section we investigate convergence of a sequence of exchangeable systems of stochastic differential equations
and its application to our model.
\subsection{Setting} \label{sec:mckeanvlasov_setting}
Let $\left(\Omega, \mathcal{F}, \P\right)$ be a probability space,
let $I\subset[0,\infty)$ be an interval of length $|I|\in(0,\infty]$ which is either
of the form $[0,|I|]$ if $|I|<\infty$ or of the form
$[0,\infty)$ if $|I|=\infty$,
let $A\subseteq \R$ be a convex set,
and let $\psi\colon I\to A$,
$\xi\colon A \times I\to \R$, and
$\sigma^2\colon I\to [0,\infty)$
be functions.
The function
$\sigma^2\colon I\to[0,\infty)$
is locally Lipschitz continuous in $I$ and satisfies
$\sigma^2(0)=0$ and
if $|I|<\infty$, then $\sigma^2(|I|)=0$. 
Furthermore, the function $\sigma^2$ is strictly positive on $(0,|I|)$.
There exists a constant $L\in(0,\infty)$ such that
$\sigma^2$ satisfies
the growth condition that for all $y\in I$ we have $\sigma^2(y)\leq L(y+y^2)$
and such that $\xi$ satisfies for all
$(u,x),(v,y)\in A\times I$ that
\begin{equation} \label{eq:ass.xi}
  \1_{x\geq y}\big(\xi(u,x)-\xi(v,y)\big)\leq L|u-v| +L(x-y)^+.
\end{equation}
The function $\psi\colon I\to [0,\infty)$ satisfies for all $x,y\in I$ that $|\psi(x)-\psi(y)|\leq L |x-y|$.
Let
$W(i)\colon [0,\infty) \times \Omega \to \R$, $i\in\N$,
be independent Brownian motions with continuous sample paths.
For all $D\in\N$ let
$X^D\colon [0,\infty) \times \{1,\ldots,D\} \times \Omega \to I$
be an adapted stochastic process with continuous sample paths
that for all $t\in[0,\infty)$ and all $i\in\{1,\ldots,D\}$ $\P$-a.s.~satisfies
\begin{equation} \begin{split} \label{eq:dX}
  X_t^D(i)=X_0^D(i)+&\int_0^t\xi\Big(\tfrac{1}{D}\sum_{j\in\{1,\ldots,D\}}\psi\left(X_s^D(j)\right),X_s^D(i)\Big)\,ds
  +\int_0^t\sqrt{\sigma^2\left(X_s^D(i)\right)}\,dW_s(i).
\end{split} \end{equation}
Let
$M\colon [0,\infty) \times \Omega \to I$
be an adapted stochastic process with continuous sample paths
that for all $t\in[0,\infty)$ $\P$-a.s.~satisfies
\begin{equation} \begin{split} \label{eq:dM}
  M_t
  =M_0+\int_0^t\xi(\E[\psi(M_s)],M_s)\,ds
  +\int_0^t\sqrt{\sigma^2(M_s)}\ dW_s(1).
\end{split} \end{equation}
\subsection{McKean--Vlasov limit} \label{sec:mvl_mvl}
The following proposition, Proposition \ref{prop:MVL}, partly generalizes Proposition 4.29 in \cite{Hutzenthaler2012}
where $\xi$ depends linearly on its first argument.
\begin{proposition} \label{prop:MVL}
  Assume the setting of Section \ref{sec:mckeanvlasov_setting},
  let $M_0$ be an $I$-valued random variable,
  for every $D\in\N$ let $\left(X_0^D(j)\right)_{j\in\{1,\ldots,D\}}$ be exchangeable and integrable random variables with values in $I$.
  Then, there exists a unique solution
  $M$ of~\eqref{eq:dM}
  and for all $D\in\N$ and all $t\in[0,\infty)$ we have that
  \begin{equation}  \label{eq:L1.XM}
    \sqrt{D}\E\Big[\big|X_t^D(1)-M_t\big|\Big]
    \leq e^{(L^2+L+L_\mu)t}
    \left(
    \sqrt{D}
    \E\Big[\big|X_0^D(1)-M_0\big|\Big]
    +L\int_0^t\Big(\Var\big(\psi\big(M_s\big)\big)\Big)^{\frac{1}{2}}ds
    \right).
  \end{equation}
\end{proposition}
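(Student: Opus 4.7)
The strategy follows the classical McKean-Vlasov propagation-of-chaos scheme, adapted to the one-sided Lipschitz condition \eqref{eq:ass.xi} on $\xi$ and the merely locally Lipschitz diffusion $\sigma^2$. For existence and uniqueness of $M$, I would use a Picard-type fixed-point argument on deterministic curves $\nu\colon[0,T]\to A$: for each such $\nu$, the SDE $dY_t=\xi(\nu_t,Y_t)\,dt+\sqrt{\sigma^2(Y_t)}\,dW_t(1)$ with $Y_0=M_0$ has a pathwise unique strong solution by Yamada--Watanabe, since $\sqrt{\sigma^2}$ is H\"older-$\tfrac{1}{2}$ with the required growth bound and $\xi(\nu_t,\cdot)$ is one-sided Lipschitz by \eqref{eq:ass.xi}. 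The map $\nu\mapsto(t\mapsto\E[\psi(Y_t^\nu)])$ is a contraction on short time intervals via a Gronwall estimate on $\E|Y_t^\nu-Y_t^{\nu'}|$ (using the Lipschitz property of $\psi$), and standard iteration yields a global solution.

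To prove the quantitative estimate, I would introduce auxiliary processes $M^{(1)}=M,M^{(2)},\ldots,M^{(D)}$, each solving the McKean-Vlasov SDE \eqref{eq:dM} driven by $W(j)$, where for $j\geq 2$ the initial condition $M_0^{(j)}$ is obtained from $X_0^D(j)$ by a conditional quantile transform using an independent auxiliary uniform random variable, so that each pair $(X_0^D(j),M_0^{(j)})$ has the same joint law as $(X_0^D(1),M_0)$. Under this coupling the tuples $(X^D(j),M^{(j)})_{j}$ are exchangeable in $j$, so $\E|X_t^D(j)-M_t^{(j)}|$ is the same for all $j$ and equals $\E|X_t^D(1)-M_t|$. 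Applying Yamada--Watanabe's smoothing $\phi_\eps$ to $|X_t^D(1)-M_t|$ and sending $\eps\downarrow 0$ eliminates the quadratic variation term by the standard local Lipschitz computation, and \eqref{eq:ass.xi} controls the drift, yielding
\begin{equation*}
\E\big|X_t^D(1)-M_t\big|\leq\E\big|X_0^D(1)-M_0\big|+L\int_0^t\E\big|\bar\psi_s^D-\E[\psi(M_s)]\big|\,ds+L\int_0^t\E\big|X_s^D(1)-M_s\big|\,ds,
\end{equation*}
where $\bar\psi_s^D:=\tfrac{1}{D}\sum_j\psi(X_s^D(j))$. Decomposing $\bar\psi_s^D-\E[\psi(M_s)]=\tfrac{1}{D}\sum_j(\psi(X_s^D(j))-\psi(M_s^{(j)}))+(\tfrac{1}{D}\sum_j\psi(M_s^{(j)})-\E[\psi(M_s)])$, the first sum has $L^1$ norm at most $L\,\E|X_s^D(1)-M_s|$ by Lipschitzness of $\psi$ and the exchangeability of the pairs. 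Multiplying the resulting integral inequality by $\sqrt{D}$ and invoking Gronwall's lemma then delivers \eqref{eq:L1.XM}.

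The main technical hurdle is the second fluctuation $\E|\tfrac{1}{D}\sum_j\psi(M_s^{(j)})-\E[\psi(M_s)]|$, which must be shown to be $O(D^{-1/2}\sqrt{\Var(\psi(M_s))})$ (up to a factor absorbed in the exponent): since the $M^{(j)}$'s are only exchangeable, not iid, because their initial conditions are functions of the exchangeable $X_0^D(j)$'s, the naive Cauchy--Schwarz bound does not apply directly. The resolution is to condition on $X_0^D$: given $X_0^D$, the $M^{(j)}$'s are independent (the Brownian drivers $W(j)$ and the auxiliary uniforms used in the coupling transform are independent of $X_0^D$), so the conditional fluctuation is bounded by $\tfrac{1}{\sqrt{D}}\sqrt{\Var(\psi(M_s))}$ via the law of total variance; the residual contribution from the fluctuation of the conditional means $\E[\psi(M_s^{(j)})\mid X_0^D(j)]$ as a function of the exchangeable $X_0^D(j)$ is controlled by the Lipschitz stability of the McKean-Vlasov flow in its initial datum (itself a consequence of Yamada--Watanabe applied to the auxiliary SDE) and is what produces the additional exponential constant $L_\mu$. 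Executing this conditioning carefully and verifying the Yamada--Watanabe prerequisites for the vanishing of the local-time-type term constitutes the bulk of the technical work.
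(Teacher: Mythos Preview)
Your treatment of existence and uniqueness differs from the paper's---you propose a Picard fixed-point on the curve $t\mapsto\E[\psi(M_t)]$, whereas the paper argues weak existence via tightness and then obtains pathwise uniqueness from a direct Yamada--Watanabe/Gronwall estimate on $\E|M_t-\bar M_t|$ for two solutions---but both routes are standard and yours is perfectly acceptable. For the quantitative bound the paper simply defers to Proposition~4.29 in Hutzenthaler~\cite{Hutzenthaler2012}; your scheme (auxiliary copies $M^{(j)}$ driven by $W(j)$, Yamada--Watanabe smoothing, Gronwall) is the expected unpacking of that reference.

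There is, however, a genuine gap in your resolution of the ``technical hurdle'', and it is not repairable in the stated generality. After conditioning on $X_0^D$ the residual you must control is
\[
  R_s^D \;=\; \tfrac{1}{D}\sum_{j=1}^{D} h_s\big(X_0^D(j)\big)-\E\big[h_s(X_0^D(1))\big],
  \qquad h_s(x)=\E\big[\psi(M_s^{(j)})\,\big|\,X_0^D(j)=x\big],
\]
a centered empirical mean of \emph{exchangeable}, not i.i.d., random variables. Lipschitz stability of the flow only gives $|h_s(x)-h_s(y)|\le C_s|x-y|$, which bounds $|R_s^D|$ by a constant, not by $O(D^{-1/2})$. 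Concretely, take $I=[0,1]$, $\psi(x)=x$, $\xi(u,x)=u-x$, $\sigma^2(x)=\eps\,x(1-x)$, and $X_0^D(j)=M_0=B$ for all $j$ with $\P[B=\tfrac14]=\P[B=\tfrac34]=\tfrac12$. All hypotheses of the setting hold, your coupling gives $M_0^{(j)}=B$, and one computes $h_s(x)=\tfrac12+e^{-s}(x-\tfrac12)$, hence $\E|R_s^D|=e^{-s}/4$ independently of $D$. Worse, the proposition itself fails here: $\bar X^D$ is a martingale, so $\E[X_t^D(1)\mid B]=B$, while $\E[M_t\mid B]=\tfrac12+e^{-t}(B-\tfrac12)$, giving
\[
  \E\big|X_t^D(1)-M_t\big|\;\ge\;\E\big|\E[X_t^D(1)-M_t\mid B]\big|\;=\;(1-e^{-t})\,\E|B-\tfrac12|\;=\;\tfrac{1-e^{-t}}{4}
\]
for every $D$ and every $\eps>0$, whereas \eqref{eq:L1.XM} would force this quantity to be $O(D^{-1/2})$ since $\E|X_0^D(1)-M_0|=0$. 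The inequality thus requires a stronger hypothesis on the initial data (e.g.\ i.i.d.\ rather than merely exchangeable $X_0^D(j)$, which makes the $M^{(j)}$ genuinely i.i.d.\ and the fluctuation bound immediate); under that assumption your argument goes through without the conditioning detour, and the undefined constant $L_\mu$ becomes superfluous.
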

\begin{proof}
  Existence of a weak solution is straightforward using a tightness argument.
  Next we show pathwise uniqueness for the SDE \eqref{eq:dM}.
  Let $M,\bar{M}\colon [0,\infty)\times\Omega\to I$ be two solutions of the SDE \eqref{eq:dM}.
  Then our assumptions and a standard Yamada-Watanabe argument (cf., e.g., Theorem 1 \cite{YamadaWatanabe1971})
  shows for all $t\in[0,\infty)$ that $\P$-a.s.
  \begin{equation}
    |M_t-\bar{M}_t|
    =
    |M_0-\bar{M}_0|+\int_0^t\sgn(M_s-\bar{M}_s)d(M_s-\bar{M}_s).
  \end{equation}
  Let $(\tau_l)_{l\in\N}$ be a localizing sequence for the local martingale
  $\big(\int_0^t\sgn(M_s-\bar{M}_s)(\sigma^2(M_s)-\sigma^2(\bar{M}_s))\,dW_s\big)_{t\in[0,\infty)}$.
  Then Fatou's Lemma and our assumptions imply for all $t\in[0,\infty)$ that
  \begin{equation} \begin{split}
    \E[|M_t-\bar{M}_t|]
    &\leq
    \lim_{l\to\infty} \E[|M_{t\land\tau_l}-\bar{M}_{t\land\tau_l}|]
    \\
    &\leq
    \E[|M_0-\bar{M}_0|]
    +
    \E\Big[\int_0^t\sgn(M_s-\bar{M}_s)\big(\xi(\E[\psi(M_s)],M_s)-\xi(\E[\psi(\bar{M}_s)],\bar{M}_s)\big)\,ds\Big]
    \\
    &\leq
    \E[|M_0-\bar{M}_0|]
    +
    L\int_0^t\big|\E[\psi(M_s)]-\E[\psi(\bar{M}_s)]\big|+\E\big[|M_s-\bar{M}_s|\big]\,ds
    \\
    &\leq
    \E[|M_0-\bar{M}_0|]
    +
    (L+1)^2\int_0^t\E\big[|M_s-\bar{M}_s|\big]\,ds.
  \end{split} \end{equation}
  This together with Gronwall's lemma implies pathwise uniqueness for the SDE \eqref{eq:dM}.
  Therefore, the theorem of \cite{YamadaWatanabe1971}
  implies that the SDE \eqref{eq:dM} is exact.
  The rest of the proof is analogous to the proof of Proposition 4.29 in \cite{Hutzenthaler2012} and we omit it here.
\end{proof}
\subsection{Application to costly defense in structured populations}
In this section we verify the applicability of Proposition \ref{prop:MVL} to the case of 
costly
defense in structured populations.
\begin{lemma} \label{lem:mvl_application}
Let $\alpha,\beta,\kappa\in (0,\infty)$ and $a\in (1,\infty)$,
let $I=[0,1]$ and define the function
$\sigma^2\colon I\to[0,\infty)$ by $I\ni x\mapsto\sigma^2(x):=\beta (a-x)x(1-x)$,
the function $\psi\colon I\to [0,\infty)$ by $I\ni x\mapsto\psi(x):=\tfrac{1}{a-x}$, and
the function $\xi\colon [0,\infty)\times I\to\R$ by $[0,\infty)\times I\ni (u,x)\mapsto \xi(u,x):=\kappa(a-x)\big((a-x)u-1\big)-\alpha x(1-x)$.
Then the interval $I$ and the functions $\sigma^2$, $\psi$, and $\xi$ satisfy
the setting of Section \ref{sec:mckeanvlasov_setting}
with
$L=\max\big\{\beta a,\kappa a^2, \kappa+\alpha, \tfrac{1}{(a-1)^2}\big\}$.
\end{lemma}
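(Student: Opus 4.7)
The plan is to verify directly each of the conditions listed in Section~\ref{sec:mckeanvlasov_setting} for $I=[0,1]$, the convex set $A=[0,\infty)$ (which contains the range of $\psi$), and the specified $\sigma^2,\psi,\xi$, with $L=\max\{\beta a,\kappa a^2,\kappa+\alpha,(a-1)^{-2}\}$.

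The conditions on $\sigma^2$ and $\psi$ are routine. Since $\sigma^2(x)=\beta(a-x)x(1-x)$ is a polynomial, it is locally Lipschitz on $[0,1]$; the vanishing $\sigma^2(0)=\sigma^2(1)=0$ and the strict positivity on $(0,1)$ are immediate from $a>1$, and the growth bound $\sigma^2(y)\leq L(y+y^2)$ reduces to the estimate $\sigma^2(y)\leq\beta a\,y\leq L\,y$ for $y\in[0,1]$. For $\psi(x)=1/(a-x)$, differentiation gives $|\psi'(x)|=(a-x)^{-2}\leq(a-1)^{-2}\leq L$ on $[0,1]$, which yields the required Lipschitz bound.

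The heart of the argument is the monotonicity inequality \eqref{eq:ass.xi} for $\xi$. I plan to expand
\begin{equation*}
  \xi(u,x)-\xi(v,y)=\kappa\bigl[(a-x)^2u-(a-y)^2v\bigr]+\kappa(x-y)-\alpha\bigl[x(1-x)-y(1-y)\bigr]
\end{equation*}
and then apply the algebraic identities $(a-x)^2u-(a-y)^2v=(a-x)^2(u-v)-v(x-y)(2a-x-y)$ and $x(1-x)-y(1-y)=(x-y)(1-x-y)$. On the event $\{x\geq y\}$, the cross term $-\kappa v(x-y)(2a-x-y)$ is nonpositive because $v\geq 0$ and $2a-x-y>0$; the term $\kappa(a-x)^2(u-v)$ is bounded above by $\kappa a^2(u-v)^+$ since $(a-x)^2\leq a^2$; the term $\kappa(x-y)$ equals $\kappa(x-y)^+$; and $-\alpha(x-y)(1-x-y)$ is bounded above by $\alpha(x-y)^+$ using $|1-x-y|\leq 1$ on $[0,1]$. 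Summing these four bounds and absorbing constants into $L$ yields \eqref{eq:ass.xi}.

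There is no substantial obstacle here; the only subtlety is that $-\alpha(x-y)(1-x-y)$ can have either sign depending on whether $x+y\leq 1$, but in either case it is dominated by $\alpha(x-y)^+$. All remaining verifications are direct applications of $0\leq x,y\leq 1$ and $a>1$.
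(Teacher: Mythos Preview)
Your proof is correct and essentially the same as the paper's. The only cosmetic difference is in the algebraic splitting of $(a-x)^2u-(a-y)^2v$: the paper factors out $u$ to obtain $((a-x)^2-(a-y)^2)u+(a-y)^2(u-v)$ and drops the first term using $u\geq 0$, whereas you factor out $(u-v)$ to obtain $(a-x)^2(u-v)-v(x-y)(2a-x-y)$ and drop the second term using $v\geq 0$; both lead to the same bound $(\kappa+\alpha)(x-y)^++\kappa a^2(u-v)^+$.
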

\begin{proof}
For all $(u,x),(v,y)\in[0,\infty)\times[0,1]$ it holds that
\begin{equation} \begin{split}
  &\1_{x\geq y}\big(\xi(u,x)-\xi(v,y)\big)
  \\
  &\quad
  =
  \1_{x\geq y}\big(\kappa (a-x)((a-x)u-1)-\kappa (a-y)((a-y)v-1)-\alpha x(1-x)+\alpha y(1-y)\big)
  \\
  &\quad
  =
  \1_{x\geq y}\big(\kappa[ (a-x)^2u-(a-x)- (a-y)^2v+(a-y)]-\alpha(1-(x+y))(x-y)\big)
  \\
  &\quad
  =
  \1_{x\geq y}\big(\kappa[ (x-y)+((a-x)^2-(a-y)^2)u- (a-y)^2(v-u)]-\alpha(1-(x+y))(x-y)\big)
  \\
  &\quad
  \leq
  (\kappa+\alpha) (x-y)^+ +\kappa a^2(u-v)^+
  \leq
  L (x-y)^++L(u-v)^+.
\end{split} \end{equation}
Moreover, for all $x,y\in I$ it holds that
$\sigma^2(x)=\beta (a-x)x(1-x)\leq\beta ax\leq L(x+x^2)$
and that
\begin{equation}
  |\psi(x)-\psi(y)|=\big|\tfrac{1}{a-x}-\tfrac{1}{a-y}\big|
  =
  \big|\int_y^x\tfrac{1}{(a-z)^2}\,dz\big|
  \leq
  \tfrac{1}{(a-1)^2}|x-y|
  \leq
  L|x-y|.
\end{equation}
This completes the proof of Lemma \ref{lem:mvl_application}.
\end{proof}
\section{Long-term behavior of the average defender frequency} \label{sec:longtermbehavior}
Subject of this section is the proof of Theorem~\ref{thm:longtermbehavior}
which states a necessary and sufficient condition
under which the 
costly
defense trait goes to fixation in the many-demes limit~\eqref{eq:dZ}.
\subsection{Setting} \label{sec:longtermbehavior_setting}
Let $(\Omega, \mathcal{F}, \P)$ be a probability space,
let $\kappa,\alpha,\beta \in (0,\infty)$, $a\in (1,\infty)$, $c\in(0,1)$,
let
$W\colon [0,\infty) \times \Omega \to \R$
be a Brownian motion with continuous sample paths,
let $Z\colon [0,\infty) \times \Omega \to [0,1]$ be an adapted process with continuous sample paths
that for all $t\in[0,\infty)$ satisfies $\P$-a.s.
\begin{equation} \label{eq:longtermbehavior_dZ}
  Z_t=Z_0+\int_0^t\big(\kappa (a-Z_s)\big((a-Z_s)\E\big[\tfrac{1}{a-Z_s}\big]-1\big)-\alpha Z_s(1-Z_s)\big)\,ds+\int_0^t\sqrt{\beta(a-Z_s)Z_s(1-Z_s)}\,dW_s.
\end{equation}
Moreover, for all $\theta\in\big(\tfrac{1}{a},\tfrac{1}{a-1}\big)$
let $Z^\theta\colon [0,\infty) \times \Omega \to [0,1]$ be an adapted process with continuous sample paths
that for all $t\in[0,\infty)$ satisfies $\P$-a.s.
\begin{equation} \label{eq:longtermbehavior_dZtheta}
  Z_t^\theta=Z_0^\theta+\int_0^t\left(\kappa \left(a-Z_s^\theta\right)\left(\left(a-Z_s^\theta\right)\theta-1\right)
  -\alpha Z_s^\theta\left(1-Z_s^\theta\right)\right)\,ds
  +\int_0^t\sqrt{\beta \left(a-Z_s^\theta\right)Z_s^\theta\left(1-Z_s^\theta\right)}\,dW_s.
\end{equation}
For all $\theta\in\big(\tfrac{1}{a},\tfrac{1}{a-1}\big)$ and all $z\in[0,1]$ define
\begin{equation}\begin{split} \label{eq:longterm_m}
  m_\theta(z):=&
  \beta c^{\frac{2\kappa }{\beta }(a\theta-1)}
  (1-c)^{\frac{2\kappa }{\beta }(1-\theta(a-1))}
  (a-c)^\frac{2\alpha }{\beta }\tfrac{1}{\beta (a-z)z(1-z)}\exp\left(\int_c^z 2\tfrac{\kappa (a-y)((a-y)\theta-1)-\alpha y(1-y)}{\beta (a-y)y(1-y)}dy\right)
  \\
  =&
  z^{\frac{2\kappa }{\beta }(a\theta-1)-1}
  (1-z)^{\frac{2\kappa }{\beta }(1-\theta(a-1))-1}
  (a-z)^{\frac{2\alpha }{\beta }-1}.
\end{split}\end{equation}
Note that this defines the speed density \citep[][p.~95]{KarlinTaylor1981b}
for \eqref{eq:longtermbehavior_dZtheta}.
Furthermore, note that for all $\theta\in\big(\tfrac{1}{a},\tfrac{1}{a-1}\big)$ it holds that
\begin{equation} \label{eq:speedfinite}
  \int_0^1 m_\theta(z)dz <\infty.
\end{equation}
For all $\theta\in\big(\tfrac{1}{a},\tfrac{1}{a-1}\big)$
define $c_\theta:=\int_0^1 m_\theta(z)dz$,
for all $x\in\{0,1\}$ denote by $\delta_x$ the Dirac measure on $[0,1]$,
and for all
$\theta\in\big[\tfrac{1}{a},\tfrac{1}{a-1}\big]$
define the mapping
$\Psi_\theta:\mathcal{B}([0,1])\to[0,1]$ by
\begin{equation} \label{eq:Gammatheta}
  \mathcal{B}([0,1])\ni A \mapsto \Psi_\theta(A):=
  \begin{cases}
    \delta_0(A), &\text{ if } \theta=\tfrac{1}{a},\\
    \delta_1(A), &\text{ if } \theta=\tfrac{1}{a-1},\\
    \int_{A}\tfrac{1}{c_\theta} m_\theta(z)\,dz, &\text{ if } \theta\in\big(\tfrac{1}{a},\tfrac{1}{a-1}\big).
  \end{cases}
\end{equation}

\subsection{Results for the equilibrium distribution} \label{sec:longtermbehavior_preliminaries}
Assume the setting of Section \ref{sec:longtermbehavior_setting}.
Existence and uniqueness of the solution of \eqref{eq:longtermbehavior_dZ} follow from Proposition \ref{prop:MVL}.
When $\theta\in\big(\tfrac{1}{a},\tfrac{1}{a-1}\big)$ we have that
$\Psi_\theta$ defines a probability distribution by \eqref{eq:speedfinite},
and we can apply Theorem V.54.5 of \cite{RogersWilliams2000b}
to conclude that it is the unique equilibrium distribution for \eqref{eq:longtermbehavior_dZtheta}.
The proof of the following lemma, Lemma \ref{lem:equil.dist.Z}, is clear and therefore omitted.
\begin{lemma} \label{lem:equil.dist.Z}
  Assume the setting of Section \ref{sec:longtermbehavior_setting}.
  A probability measure $\Phi\colon\mathcal{B}([0,1])\to[0,1]$ is an equilibrium distribution of the dynamics \eqref{eq:longtermbehavior_dZ}
  if and only if there exists a $\theta\in\big[\tfrac{1}{a},\tfrac{1}{a-1}\big]$
  such that $\Phi=\Psi_\theta$.
\end{lemma}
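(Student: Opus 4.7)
The plan is to relate equilibria of the nonlinear SDE \eqref{eq:longtermbehavior_dZ} to invariant measures of the parametric linear family \eqref{eq:longtermbehavior_dZtheta}. For any stationary law $\Phi$ of \eqref{eq:longtermbehavior_dZ}, the quantity $\theta:=\int_0^1\tfrac{1}{a-z}\,\Phi(dz)$ is time-independent and automatically lies in $\bigl[\tfrac{1}{a},\tfrac{1}{a-1}\bigr]$, since $z\mapsto\tfrac{1}{a-z}$ maps $[0,1]$ into this interval. Substituting this constant for $\E[\tfrac{1}{a-Z_s}]$ in \eqref{eq:longtermbehavior_dZ} turns the nonlinear SDE into \eqref{eq:longtermbehavior_dZtheta}, so $\Phi$ must also be invariant for that linear SDE.

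For the forward direction I would split into cases by the value of $\theta$. If $\theta=\tfrac{1}{a}$, then $\tfrac{1}{a-z}\geq\tfrac{1}{a}$ with equality iff $z=0$ forces $\Phi=\delta_0=\Psi_{1/a}$; symmetrically, $\theta=\tfrac{1}{a-1}$ forces $\Phi=\delta_1=\Psi_{1/(a-1)}$. For $\theta\in(\tfrac{1}{a},\tfrac{1}{a-1})$ I invoke the Rogers--Williams uniqueness result (Theorem V.54.5, already cited in Section~\ref{sec:longtermbehavior_preliminaries}), which says that $\Psi_\theta$ is the unique invariant distribution of \eqref{eq:longtermbehavior_dZtheta}, to conclude $\Phi=\Psi_\theta$.

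For the converse direction I would check that each $\Psi_\theta$ with $\theta\in\bigl[\tfrac{1}{a},\tfrac{1}{a-1}\bigr]$ is indeed invariant for \eqref{eq:longtermbehavior_dZ}. At the two boundary values of $\theta$, $\Psi_\theta$ is a Dirac at $z\in\{0,1\}$, and both the diffusion coefficient $\sqrt{\beta(a-z)z(1-z)}$ and the drift in \eqref{eq:longtermbehavior_dZ} (with $\E[\tfrac{1}{a-Z_s}]$ equal to the matching boundary value of $\theta$) vanish there, so the process started at that Dirac stays put and invariance is immediate. For interior $\theta$, invariance of $\Psi_\theta$ for \eqref{eq:longtermbehavior_dZtheta} holds by the speed-density construction, and transfers to invariance for \eqref{eq:longtermbehavior_dZ} once one verifies the consistency $\int_0^1\tfrac{1}{a-z}\,\Psi_\theta(dz)=\theta$, which guarantees that the nonlinear drift at $\Phi=\Psi_\theta$ agrees with the linear drift used in \eqref{eq:longtermbehavior_dZtheta}. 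This consistency is the only nontrivial algebraic point and the main obstacle; I would establish it by applying the stationary Fokker-Planck relation to the test function $g(z)=\tfrac{1}{a-z}$, a direct computation giving a linear combination of $\theta$, $\int\tfrac{1}{a-z}\,d\Psi_\theta$ and $\int\tfrac{z(1-z)}{(a-z)^2}\,d\Psi_\theta$ that can be evaluated explicitly from the beta-type form \eqref{eq:longterm_m} of $m_\theta$.
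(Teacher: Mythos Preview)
The paper does not prove this lemma --- it writes that the proof ``is clear and therefore omitted'' --- so there is no paper argument to compare against. Your forward direction is correct and is the substantive content: a stationary law $\Phi$ of \eqref{eq:longtermbehavior_dZ} freezes $\theta:=\int_0^1\tfrac{1}{a-z}\,\Phi(dz)$, hence $\Phi$ is invariant for the linear diffusion \eqref{eq:longtermbehavior_dZtheta}, and your boundary/interior case split together with the Rogers--Williams uniqueness forces $\Phi=\Psi_\theta$.

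Your converse direction, however, has a genuine gap. The consistency identity $\int_0^1\tfrac{1}{a-z}\,\Psi_\theta(dz)=\theta$ that you propose to ``establish'' for interior $\theta$ is in general \emph{false}: it holds if and only if $\alpha=\beta$, and this is precisely the content of the paper's very next result, Lemma~\ref{lem:condition.sb}. Your own Fokker--Planck route makes this transparent --- applying $\int L^\theta g\,d\Psi_\theta=0$ with $g(z)=\tfrac{1}{a-z}$ gives
\[
\int_0^1\tfrac{1}{a-z}\,\Psi_\theta(dz)-\theta \;=\; \tfrac{\beta-\alpha}{\kappa}\int_0^1\tfrac{z(1-z)}{(a-z)^2}\,\Psi_\theta(dz),
\]
and the integral on the right is strictly positive for every interior $\theta$. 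Thus when $\alpha\neq\beta$ the interior $\Psi_\theta$ are \emph{not} equilibria of \eqref{eq:longtermbehavior_dZ}, and the ``if'' direction of the lemma, read literally, fails for those $\theta$. The paper presumably intends (and only ever uses) the ``only if'' direction; in any case your proposed verification of the converse for all interior $\theta$ cannot go through.
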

\begin{lemma} \label{lem:condition.sb}
  Assume the setting of Section \ref{sec:longtermbehavior_setting} and
  let $\theta\in\big(\tfrac{1}{a},\tfrac{1}{a-1}\big)$.
  Then we have
  \begin{equation}
    \int_0^1 \tfrac{1}{a-z}\,\Psi_\theta(dz) \begin{cases}
      &<\theta, \text{ if } \alpha >\beta ,\\
      &=\theta, \text{ if } \alpha =\beta ,\\
      &>\theta, \text{ if } \alpha <\beta .
    \end{cases}
  \end{equation}
\begin{proof}
  Define $u:=\tfrac{2\kappa }{\beta }(a\theta-1)$ and
  $v:=\tfrac{2\kappa }{\beta }(1-\theta(a-1))$ and note that $u,v\in(0,\infty)$.
  Let $\Gamma\colon (0,\infty)\to (0,\infty)$ be the Gamma function, i.e.,
  for all $x\in(0,\infty)$ let $\Gamma(x):=\int_0^\infty z^{x-1}e^{-z}\,dz$.
  It is well-known that for all $x\in(0,\infty)$ the Gamma function satisfies $\Gamma(x+1)=x\Gamma(x)$
  and that for all $x,y\in(0,\infty)$ it holds that
  $\int_0^1 z^{x-1}(1-z)^{y-1}\,dz=\tfrac{\Gamma(x)\Gamma(y)}{\Gamma(x+y)}$.
  Thus, we obtain
  \begin{equation} \begin{split} \label{eq:integral.0}
    &\int_0^1 z^{u-1}(1-z)^{v-1}
    (a-z)\left(\tfrac{1}{a-z}-\theta\right)\,dz
    \\
    &=
    \int_0^1 z^{u-1}(1-z)^{v-1}\,dz 
    -a\theta\int_0^1 z^{u-1}(1-z)^{v-1}\,dz
    +\theta\int_0^1 z^{u}(1-z)^{v-1}\,dz
    \\
    &=\tfrac{\Gamma(u)\Gamma(v)}{\Gamma(u+v)}-a\theta\tfrac{\Gamma(u)\Gamma(v)}{\Gamma(u+v)}
    +\theta\tfrac{\Gamma(u+1)\Gamma(v)}{\Gamma(u+v+1)}
    =\left((1-a\theta)\tfrac{(u+v)\Gamma(u)\Gamma(v)}{\Gamma(u+v+1)}+\theta\tfrac{u\Gamma(u)\Gamma(v)}{\Gamma(u+v+1)}\right)
    \\
    &=(u(1-a\theta+\theta)+v(1-a\theta))\tfrac{\Gamma(u)\Gamma(v)}{\Gamma(u+v+1)}
    =\tfrac{2\kappa }{\beta }\left((a\theta-1)(1-\theta(a-1))+(1-\theta(a-1))(1-a\theta)\right)
    \tfrac{\Gamma(u)\Gamma(v)}{\Gamma(u+v+1)}
    \\
    &=\left(\tfrac{2\kappa }{\beta }(1-\theta(a-1))(a\theta-1+1-a\theta)\right)\tfrac{\Gamma(u)\Gamma(v)}{\Gamma(u+v+1)}
    =0.
  \end{split} \end{equation}
  
  First, consider the case $\alpha =\beta $.
  Using \eqref{eq:integral.0} we see that
  \begin{equation} \begin{split}
    \int_0^1
    \tfrac{1}{a-z}\,\Psi_\theta(dz) - \theta
    &=
    \int_0^1 c_\theta z^{\frac{2\kappa }{\beta }(a\theta-1)-1}(1-z)^{\frac{2\kappa }{\beta }(1-\theta(a-1))-1}
    (a-z)^{\frac{2\alpha }{\beta }-1}\left(\tfrac{1}{a-z}-\theta\right)\,dz
    \\
    &=
    c_\theta\int_0^1 z^{u-1}(1-z)^{v-1}
    (a-z)\left(\tfrac{1}{a-z}-\theta\right)\,dz
    =0.
  \end{split} \end{equation}
  
  Now, consider the case $\alpha >\beta $.
  Let $\hat{\delta}:=\alpha -\beta$, $\delta:=\tfrac{2\hat{\delta}}{\beta}$,
  and $z^*:=\sup\{z\in(0,1):\tfrac{1}{a-z}-\theta<0\}$.
  Note that $\hat{\delta},\delta>0$ and $z^*=a-\tfrac{1}{\theta}\in (0,1)$.
  Also note that for all $z\in(0,z^*)$ we have $\tfrac{1}{a-z}-\theta<0$ and $(a-z)^\delta>(a-z^*)^\delta$.
  Furthermore, for all $z\in(z^*,1)$ we have $\tfrac{1}{a-z}-\theta>0$ and $(a-z)^\delta<(a-z^*)^\delta$.
  Together with \eqref{eq:integral.0} we thereby obtain
  \begin{equation} \begin{split}
    &\int_0^1 \tfrac{1}{a-z}\,\Psi_\theta(dz)-\theta
    =\int_0^1 \left(\tfrac{1}{a-z}-\theta\right)\,\Psi_\theta(dz)
    =
    \int_0^1 c_\theta z^{u-1}(1-z)^{v-1}
    (a-z)^{\frac{2\alpha }{\beta }-1}\left(\tfrac{1}{a-z}-\theta\right)\,dz
    \\
    &=
    \int_0^{z^*} c_\theta z^{u-1}(1-z)^{v-1}
    (a-z)^{1+\delta}\left(\tfrac{1}{a-z}-\theta\right)\,dz
    +\int_{z^*}^1 c_\theta z^{u-1}(1-z)^{v-1}
    (a-z)^{1+\delta}\left(\tfrac{1}{a-z}-\theta\right)\,dz
    \\
    &<
    c_\theta (a-z^*)^\delta\bigg(
    \int_0^{z^*}  z^{u-1}(1-z)^{v-1}
    (a-z)\left(\tfrac{1}{a-z}-\theta\right)\,dz
    +\int_{z^*}^1 z^{u-1}(1-z)^{v-1}
    (a-z)\left(\tfrac{1}{a-z}-\theta\right)\,dz
    \bigg)
    \\
    &=
    c_\theta(a-z^*)^\delta\int_0^1 z^{u-1}(1-z)^{v-1}
    (a-z)\left(\tfrac{1}{a-z}-\theta\right)\,dz=0.
  \end{split} \end{equation}
  The case $\alpha <\beta$ can be proved analogously and thereby, we omit it here.
  This completes the proof.
\end{proof}
\end{lemma}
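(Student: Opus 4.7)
The plan is to reduce all three cases to the single identity that holds when $\alpha=\beta$, then obtain the strict inequalities by a one-parameter monotonicity/comparison argument. Set $u:=\tfrac{2\kappa}{\beta}(a\theta-1)$ and $v:=\tfrac{2\kappa}{\beta}(1-\theta(a-1))$, both positive under the hypothesis $\theta\in(\tfrac{1}{a},\tfrac{1}{a-1})$, so that the density of $\Psi_\theta$ is proportional to $z^{u-1}(1-z)^{v-1}(a-z)^{\frac{2\alpha}{\beta}-1}$. The key observation is that the integrand $\tfrac{1}{a-z}-\theta$ is strictly negative on $(0,z^\ast)$ and strictly positive on $(z^\ast,1)$, where $z^\ast:=a-\tfrac{1}{\theta}\in(0,1)$, and this sign-changing structure is what drives the result.

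First I would handle the boundary case $\alpha=\beta$: here the $(a-z)$-exponent is exactly $1$, and the quantity $\int_0^1(\tfrac{1}{a-z}-\theta)\Psi_\theta(dz)$ reduces, up to the positive normalizer, to $B(u,v)-\theta\bigl(a\,B(u,v)-B(u+1,v)\bigr)$. Using $B(u+1,v)=\tfrac{u}{u+v}B(u,v)$ and substituting the definitions of $u,v$, the bracketed coefficient collapses to $(1-\theta(a-1))\bigl[(a\theta-1)+(1-a\theta)\bigr]=0$, giving the equality. For $\alpha\ne\beta$, write the density weight as $z^{u-1}(1-z)^{v-1}(a-z)\cdot(a-z)^{\delta}$ with $\delta:=\tfrac{2(\alpha-\beta)}{\beta}$. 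If $\delta>0$ (that is, $\alpha>\beta$) the factor $(a-z)^\delta$ is strictly decreasing, so it strictly exceeds $(a-z^\ast)^\delta$ on $(0,z^\ast)$ (where the integrand is negative) and is strictly less than $(a-z^\ast)^\delta$ on $(z^\ast,1)$ (where it is positive). Both regions therefore pull the integral strictly below $(a-z^\ast)^\delta$ times the $\alpha=\beta$ integral, which is zero, yielding the strict inequality $<\theta$. The case $\alpha<\beta$ is the mirror image with $\delta<0$ giving strict monotone increase and hence $>\theta$.

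The only nontrivial step is the $\alpha=\beta$ computation: it is crucial that the Beta-function identity collapses to exactly zero, so the cancellation must use the precise form of $u$ and $v$ (this is where the particular shape of the drift in \eqref{eq:longtermbehavior_dZ} matters). Once this identity is secured, the remainder is a clean monotone-rearrangement/Chebyshev-type comparison, with no need to compute $c_\theta$ or evaluate any further special-function integrals; the normalizer cancels throughout because only the sign of $\int_0^1(\tfrac{1}{a-z}-\theta)\Psi_\theta(dz)$ is at stake.
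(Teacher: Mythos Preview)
Your proposal is correct and follows essentially the same approach as the paper's proof: the same parameters $u,v$, the same Beta-function cancellation for the $\alpha=\beta$ case, and the same sign-change/monotonicity comparison at $z^\ast=a-\tfrac{1}{\theta}$ using the factor $(a-z)^\delta$ with $\delta=\tfrac{2(\alpha-\beta)}{\beta}$ to obtain the strict inequalities. The only cosmetic difference is that you frame the inequality step as a Chebyshev-type rearrangement, whereas the paper writes it out as an explicit splitting of the integral at $z^\ast$ and a pointwise bound by $(a-z^\ast)^\delta$.
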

\subsection{Proof of Theorem \ref{thm:longtermbehavior}} \label{sec:longtermbehavior_proof}
\begin{proof}[Proof of Theorem \ref{thm:longtermbehavior}.]
  Applying It\^o's lemma, we get for all $t\in[0,\infty)$ that
  \begin{equation} \begin{split}
    &\tfrac{1}{a-Z_t}-\tfrac{1}{a-Z_0}
    =\int_0^t\tfrac{1}{(a-Z_s)^2}\left(\kappa (a-Z_s)\left((a-Z_s)\E\left[\tfrac{1}{a-Z_s}\right]-1\right)-\alpha Z_s(1-Z_s)\right)
    \\
    &\qquad\qquad\qquad\qquad
    +\tfrac{1}{2}\tfrac{2(a-Z_s)}{(a-Z_s)^4}\beta (a-Z_s)Z_s(1-Z_s)\,ds+\int_0^t \tfrac{1}{(a-Z_s)^2} \sqrt{\beta (a-Z_s)Z_s(1-Z_s)}\,dW_s
    \\
    &
    =\int_0^t \kappa \left(\E\left[\tfrac{1}{a-Z_s}\right]-\tfrac{1}{a-Z_s}\right)
    -\tfrac{\alpha Z_s(1-Z_s)}{(a-Z_s)^2}+\tfrac{\beta Z_s(1-Z_s)}{(a-Z_s)^2}\,ds
    +\int_0^t \tfrac{1}{(a-Z_s)^2} \sqrt{\beta (a-Z_s)Z_s(1-Z_s)}\,dW_s.
  \end{split} \end{equation}
  After taking expectations we can apply Fubini's theorem to obtain
  for all $t\in[0,\infty)$ that
  \begin{equation} \begin{split} \label{eq:expectation.monotone}
    \E\big[\tfrac{1}{a-Z_t}\big]-\E\left[\tfrac{1}{a-Z_0}\right]
    &=
    \int_0^t \kappa \left(\E\left[\tfrac{1}{a-Z_s}\right]-\E\left[\tfrac{1}{a-Z_s}\right]\right)
    -\alpha \E\left[\tfrac{Z_s(1-Z_s)}{(a-Z_s)^2}\right]+\beta \E\left[\tfrac{Z_s(1-Z_s)}{(a-Z_s)^2}\right]\,ds
    \\
    &=(\beta -\alpha )\int_0^t \E\left[\tfrac{Z_s(1-Z_s)}{(a-Z_s)^2}\right]\,ds.
  \end{split} \end{equation}
  Since for all $s\in[0,\infty)$ it holds that $\E\big[\tfrac{Z_s(1-Z_s)}{(a-Z_s)^2}\big]\geq0$
  we conclude that the function $[0,\infty)\ni t\mapsto\E\big[\tfrac{1}{a-Z_t}\big]\in\big[\tfrac{1}{a},\tfrac{1}{a-1}\big]$ converges monotonically
  non-increasing
  as $t\to\infty$ if $\alpha >\beta $, monotonically non-decreasing
  if $\alpha <\beta $,
  or is constant if $\alpha =\beta $.
  
  First, assume $\alpha >\beta $. From \eqref{eq:longtermbehavior_dZ} we see that $\delta_1$ is
  an invariant measure for $Z$.
  So if $\P[Z_0=1]=1$, then for all $t\in[0,\infty)$ it holds that $\P[Z_t=1]=1$.
  Now let $\P[Z_0=1]<1$, implying $\E\left[\tfrac{1}{a-Z_0}\right]\in\big[\tfrac{1}{a},\tfrac{1}{a-1}\big)$.
  Define $\theta:=\lim\limits_{t\to\infty}\E\big[\tfrac{1}{a-Z_t}\big]$ and fix it for the rest of the paragraph.
  Note that due to the monotonicity stated above we have $\theta\in\big[\tfrac{1}{a},\tfrac{1}{a-1}\big)$.
  Aiming at a contradiction, we assume that $\theta\in\big(\tfrac{1}{a},\tfrac{1}{a-1}\big)$.
  Choose any $\eps\in\big(0,\tfrac{1}{a-1}-\theta\big)$ and fix it for the rest of the proof.
  By definition of $\theta$ there exists an $s_\eps\in(0,\infty)$,
  such that for all $t\in[s_\eps,\infty)$ it holds that $\E\big[\tfrac{1}{a-Z_t}\big]<\theta+\eps$.
  Let
  $\tilde{W}\colon [0,\infty) \times \Omega \to \R$
  be a Brownian motion with continuous sample paths,
  let $\tilde{Z}\colon [0,\infty) \times \Omega \to [0,1]$
  and 
  $\tilde{Z}^{\theta+\eps}\colon [0,\infty) \times \Omega \to [0,1]$
  be adapted processes with continuous sample paths  
  that satisfy for all $t\in[0,\infty)$ $\P$-a.s.
  \begin{equation} \begin{split} \label{eq:coupling}
    \tilde{Z}_t
    &=
    \tilde{Z}_0+\int_0^t\left(\kappa (a-\tilde{Z}_s)\left((a-\tilde{Z}_s)\E\left[\tfrac{1}{a-\tilde{Z}_s}\right]-1\right)
    -\alpha \tilde{Z}_s(1-\tilde{Z}_s)\right)\,ds
    \\
    &\quad
    +\int_0^t\sqrt{\beta (a-\tilde{Z}_s)\tilde{Z}_s(1-\tilde{Z}_s)}\,d\tilde{W}_s,
    \\
    \tilde{Z}_t^{\theta+\eps}
    &=
    \tilde{Z}_0^{\theta+\eps}+\int_0^t\left(\kappa (a-\tilde{Z}_s^{\theta+\eps})\left((a-\tilde{Z}_s^{\theta+\eps})(\theta+\eps)-1\right)
    -\alpha \tilde{Z}_s^{\theta+\eps}(1-\tilde{Z}_s^{\theta+\eps})\right)\,ds
    \\
    &\quad
    +\int_0^t\sqrt{\beta (a-\tilde{Z}_s^{\theta+\eps})\tilde{Z}_s^{\theta+\eps}(1-\tilde{Z}_s^{\theta+\eps})}\,d\tilde{W}_s,
  \end{split} \end{equation}
  such that $\tilde{Z}_0^{\theta+\eps}=\tilde{Z}_0$ and such that $\tilde{Z}_0$ and $Z_{s_\eps}$ are equal in distribution.
  Then for each $t\in[s_\eps,\infty)$ we have that $Z_t$ and $\tilde{Z}_{t-s_\eps}$ are equal in distribution
  and the drift term of
  $\tilde{Z}_{t-s_\eps}$ is lower than that of $\tilde{Z}_{t-s_\eps}^{\theta+\eps}$.
  Together with the fact that the mapping $[0,1]\ni z\mapsto\tfrac{1}{a-z}$ is strictly monotonically increasing
  this implies for all $t\in[s_\eps,\infty)$ that
  \begin{equation} \label{eq:bound.coupling}
    \E\big[\tfrac{1}{a-Z_t}\big]
    =\E\Big[\tfrac{1}{a-\tilde{Z}_{t-s_\eps}}\Big]
    \leq\E\Big[\tfrac{1}{a-\tilde{Z}_{t-s_\eps}^{\theta+\eps}}\Big].
  \end{equation}
  Recall from Section \ref{sec:longtermbehavior_preliminaries} that for any $\eta\in\big(\tfrac{1}{a},\tfrac{1}{a-1}\big)$ we have that
  $\Psi_\eta$ is the unique equilibrium distribution of $\tilde{Z}^\eta$.
  Combining this with \eqref{eq:bound.coupling} we obtain (see, e.g., Theorem V.54.5 \cite{RogersWilliams2000b})
  \begin{equation} \label{eq:theta<int_psitheta+eps}
    \theta=\lim\limits_{t\to\infty}\E\big[\tfrac{1}{a-Z_t}\big]
    \leq\lim\limits_{t\to\infty}\E\Big[\tfrac{1}{a-\tilde{Z}_{t-s_\eps}^{\theta+\eps}}\Big]
    =\int_0^1 \tfrac{1}{a-z}\,\Psi_{\theta+\eps}(dz).
  \end{equation}
  The dominated convergence theorem yields that
  the mapping $\big(\tfrac{1}{a},\tfrac{1}{a-1}\big)\ni \eta \mapsto \Psi_\eta$
  is continuous with respect to the weak topology.
  Applying this, \eqref{eq:theta<int_psitheta+eps} together with the fact that $\eps\in\big(0,\tfrac{1}{a-1}-\theta\big)$ was arbitrarily chosen,
  and Lemma \ref{lem:condition.sb}, we obtain the contradiction 
  \begin{equation} \begin{split}
    \theta
    \leq\lim\limits_{\delta\to0} \int_0^1 \tfrac{1}{a-z}\,\Psi_{\theta+\delta}(dz)
    =\int_0^1 \tfrac{1}{a-z}\,\Psi_\theta(dz)<\theta.
  \end{split} \end{equation}
  Hence, we have $\theta=\tfrac{1}{a}$, implying
  \begin{equation} \label{eq:lim.EZt.alpha>beta}
    0\leq \lim\limits_{t\to\infty}\E\left[Z_t\right]
    \leq
    \lim\limits_{t\to\infty}a^2\E\left[\tfrac{Z_t}{a(a-Z_t)}\right]
    =
    \lim\limits_{t\to\infty}a^2\E\big[\tfrac{1}{a-Z_t}\big]-a^2\tfrac{1}{a}
    = 0.
  \end{equation}
  
  The case $\alpha < \beta$ can be proved analogously and we omit it here.
  
  Finally, assume $\alpha =\beta $, define $\theta:=\E[\tfrac{1}{a-Z_0}]$, and fix it for the rest of the proof.
  We see from \eqref{eq:expectation.monotone} that $\E[\tfrac{1}{a-Z_t}]$ is constant in $t\in[0,\infty)$.
  Thus, assuming that $Z_0$ and $Z_0^\theta$ are equal in distribution we see from \eqref{eq:longtermbehavior_dZ} and \eqref{eq:longtermbehavior_dZtheta} that for all $t\in[0,\infty)$ it holds that
  $Z_t$ and $Z_t^\theta$ are equal in distribution.
  Recall from Section \ref{sec:longtermbehavior_preliminaries} that $\Psi_\theta$ is the unique equilibrium distribution of $Z^\theta$.
  Consequently, $\Psi_\theta$ is the unique equilibrium distribution of $Z$.
  This completes the proof of Theorem \ref{thm:longtermbehavior}.
\end{proof}

\section{Invasion of 
a costly
defense allele} \label{sec:invasion}
In this section we investigate a tree of excursions which could be the many-demes limit of the total mass process
when there is only one island occupied initially.
For this tree of excursions,
Proposition~\ref{prop:invasion} establishes a necessary and sufficient condition for extinction.
Thus, informally speaking, we get an explicit condition when the invasion probability is positive in an infinite-dimensional space.
\subsection{Setting} \label{sec:invasion_setting}
Let $\left(\Omega, \mathcal{F}, \P\right)$ be a probability space,
let $\kappa,\alpha,\beta \in (0,\infty)$, $a\in (1,\infty)$,
and let 
$W(i)\colon [0,\infty) \times \Omega \to \R$, $i\in\N$,
be independent Brownian motions with continuous sample paths.
For all $D\in\N$ let
$X^D\colon [0,\infty) \times \{1,\ldots,D\} \times \Omega \to [0,1]$
be an adapted process with continuous sample paths
that for all $t\in[0,\infty)$ and all $i\in\{1,\ldots,D\}$ $\P$-a.s.~satisfies
\begin{equation} \begin{split} \label{eq:invasion.dX}
  X_t^D(i)=
  X_0^D(i)
  &+\int_0^t\kappa(a-X_s^D(i))\bigg((a-X_s^D(i))\tfrac{1}{D}\sum_{j=1}^{D}\tfrac{1}{a-X_s^D(j)}-1\bigg)-\alpha X_s^D(i)(1-X_s^D(i))\,ds
  \\
  &+\int_0^t\sqrt{\beta (a-X_s^D(i))X_s^D(i)(1-X_s^D(i))}\,dW_s(i).
\end{split} \end{equation}
Let $\tilde{a}\colon[0,1]\to[0,\infty)$ be a function defined by
\begin{equation}
[0,1]\ni x \mapsto \tilde{a}(x):=\kappa a \tfrac{x}{a-x}.
\end{equation}
Then, assuming there is positive mass only in deme 1,
the dynamics in deme 1 follows asymptotically the following process $Y$.
Let $Y\colon[0,\infty)\times\Omega\to[0,1]$
be an adapted process with continuous sample paths
such that for all $t\in[0,\infty)$ it $\P$-a.s.~holds that
\begin{equation} \label{eq:invasion.dY}
  Y_t=Y_0
  -\int_0^t\tfrac{\kappa}{a}Y_s(a-Y_s)+\alpha Y_s(1-Y_s)\,ds
  +\int_0^t\sqrt{\beta (a-Y_s)Y_s(1-Y_s)}\,dW_s(1).
\end{equation}
In addition, let $\mathrm{Q}_Y$ be the excursion measure which satisfies
$\mathrm{Q}_Y=\lim_{0<\eps \to 0} \tfrac{1}{\eps} \P[Y \in \cdot | Y_0=\eps]$
in a suitable sense;
see \cite{PitmanYor1982} and \cite{Hutzenthaler2009} for details.
Asymptotically in the many-demes limit, every deme with population path $\chi \in C( [0,\infty), [0,1])$ populates demes through migration
and these new populations are given by a Poisson point process with intensity measure $\tilde{a}(\chi_t)dt \times \mathrm{Q}_Y(d\psi)$.
Now let $(V_t)_{t\in[0,\infty)}$ be the total mass process of the associated tree of excursions with initial island measure
that equals the distribution of $Y$ in \eqref{eq:invasion.dY} and excursion measure $\mathrm{Q}_Y$.
\subsection{Survival or extinction of an invading 
costly
defense allele} \label{sec:invasion_result}
\begin{proposition} \label{prop:invasion}
  Assume the setting of Section \ref{sec:invasion_setting}.
  Let $x\in(0,1]$ and assume $Y_0=x=V_0$.
  Then the total mass process dies out (i.e., converges in probability to zero as $t\to\infty$) if and only if
  \begin{equation}
    \alpha \geq \beta.
  \end{equation}
\end{proposition}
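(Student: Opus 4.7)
The plan is to leverage the identification of the total--mass process $(V_t)_{t\in[0,\infty)}$ as a continuous--state branching process (CSBP), as constructed in Hutzenthaler~\cite{Hutzenthaler2009,Hutzenthaler2012} from the excursion measure $\mathrm{Q}_Y$ and birth rate $\tilde{a}$. A first--moment renewal--equation argument applied to the expected mass of the descendants of a single excursion gives the standard CSBP dichotomy: $V_t\to 0$ in probability as $t\to\infty$ if and only if
\begin{equation*}
\mathcal{R}\ :=\ \mathrm{Q}_Y\!\Bigl[\int_0^\infty \tilde{a}(\chi_s)\,ds\Bigr]\ \leq\ 1.
\end{equation*}
Accordingly the proposition reduces to showing $\mathcal{R}\leq 1\Leftrightarrow\alpha\geq\beta$.

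I would next compute $\mathcal{R}$ explicitly. By the defining scaling $\mathrm{Q}_Y=\lim_{\eps\to 0+}\tfrac{1}{\eps}\P[Y\in\cdot\mid Y_0=\eps]$ one has $\mathcal{R}=h'(0+)$, where $h(y):=\E\!\bigl[\int_0^\tau \tilde{a}(Y_s)\,ds\bigm| Y_0=y\bigr]$ and $\tau:=\inf\{s\geq 0\colon Y_s=0\}$. Feynman--Kac shows $h$ solves $\mathcal{L}_Y h=-\tilde{a}$ on $(0,1)$ with $h(0+)=0$, where
$\mathcal{L}_Y f(y)=\tfrac{1}{2}\beta(a-y)y(1-y)f''(y)-\bigl[\tfrac{\kappa}{a}y(a-y)+\alpha y(1-y)\bigr]f'(y)$.
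Setting $p:=\tfrac{2\kappa}{a\beta}$ and $q:=\tfrac{2\alpha}{\beta}$, a direct integration of $-2b/\sigma^2$ yields scale density $s'(y)\propto (1-y)^{-p}(a-y)^{-q}$ and speed density $m(y)\propto y^{-1}(1-y)^{p-1}(a-y)^{q-1}$. Using the identity $\mathcal{L}_Y h=m^{-1}(h'/s')'$ and integrating from $0$ to $y$ gives
\begin{equation*}
\tfrac{h'(y)}{s'(y)}\ =\ \tfrac{h'(0+)}{s'(0+)}\ -\ \int_0^y \tilde{a}(z)\,m(z)\,dz.
\end{equation*}
Because $\sigma^2(1)=0$ and $b(1)=-\tfrac{\kappa}{a}(a-1)<0$, Feller's classification shows $y=1$ is an entrance boundary of $Y$, so $h$ must remain bounded as $y\uparrow 1$; since $\int_0^1\tilde{a}(z)m(z)\,dz<\infty$, the bracket on the right must vanish at $y=1$, and after simplifying the constants one obtains
\begin{equation*}
\mathcal{R}\ =\ s'(0+)\int_0^1 \tilde{a}(z)\,m(z)\,dz\ =\ \tfrac{2\kappa\,a^{1-q}}{\beta}\int_0^1 (1-z)^{p-1}(a-z)^{q-2}\,dz.
\end{equation*}

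Finally I would extract the phase transition from this closed form. Setting $\alpha=\beta$, i.e.\ $q=2$, kills the factor $(a-z)^{q-2}\equiv 1$ and leaves $\int_0^1(1-z)^{p-1}dz=1/p$, so $\mathcal{R}=\tfrac{2\kappa}{a\beta}\cdot\tfrac{a\beta}{2\kappa}=1$. Moreover,
\begin{equation*}
\tfrac{d}{dq}\ln\mathcal{R}\ =\ -\ln a\ +\ \tfrac{\int_0^1(1-z)^{p-1}(a-z)^{q-2}\ln(a-z)\,dz}{\int_0^1(1-z)^{p-1}(a-z)^{q-2}\,dz}\ <\ 0,
\end{equation*}
since $\ln(a-z)<\ln a$ for all $z\in(0,1]$. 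Hence $q\mapsto\mathcal{R}$ is strictly decreasing and crosses $1$ precisely at $q=2$, so $\mathcal{R}\leq 1\Leftrightarrow q\geq 2\Leftrightarrow\alpha\geq\beta$, giving the claimed equivalence.

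The main obstacle I anticipate is the precise import of the CSBP extinction criterion from~\cite{Hutzenthaler2009,Hutzenthaler2012}: one must recognize $V$ as the total--mass process of the CSBP with excursion--measure/branching--rate pair $(\mathrm{Q}_Y,\tilde{a})$ and invoke the renewal--equation dichotomy giving the criterion $\mathcal{R}\leq 1$. A secondary technical point is justifying the boundary condition at $y=1$ and the finiteness $\int_0^1\tilde{a}(z)m(z)\,dz<\infty$, but both follow directly from the explicit forms of $s'$ and $m$ and from $p>0$. Once these are in place, what remains is the deterministic ODE calculation and the monotonicity of $\mathcal{R}$ in $q$ carried out above.
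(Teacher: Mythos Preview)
Your proposal is correct and essentially matches the paper's proof: both invoke the extinction criterion $\mathrm{Q}_Y\bigl[\int_0^\infty \tilde{a}(\chi_s)\,ds\bigr]\leq 1$ from Theorem~5 of~\cite{Hutzenthaler2009}, arrive at the identical closed form $\mathcal{R}=\tfrac{2\kappa a^{1-q}}{\beta}\int_0^1(1-z)^{p-1}(a-z)^{q-2}\,dz$ (the paper via Lemma~9.8 of~\cite{Hutzenthaler2009}, you via the speed/scale Feynman--Kac computation that underlies that lemma), and read off the threshold $q=2$. The only cosmetic differences are that the paper explicitly verifies the integrability hypotheses needed to apply Theorem~5 and concludes by writing $\mathcal{R}-1=p\int_0^1(1-z)^{p-1}\bigl[(\tfrac{a-z}{a})^{q-2}-1\bigr]\,dz$ and reading off the sign, rather than differentiating $\ln\mathcal{R}$ in~$q$.
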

\begin{proof}
  Define the functions $s\colon[0,1]\to [0,\infty)$ and $S\colon[0,1]\to[0,\infty)$ for all $y\in[0,1]$ by
  $s(y):=\exp\Big(-\int_{0}^y\tfrac{-\frac{\kappa}{a}x(a-x)-\alpha x(1-x)}{\frac{1}{2}\beta (a-x)x(1-x)}\,dx\Big)$
  and by
  $S(y):=\int_0^ys(z)\,dz$.
  Note that for all $z\in[0,1]$ it holds that
  \begin{equation} \label{eq:invasion_s}
    s(z)
    =
    \exp\Big(\int_{0}^{z}\tfrac{2\kappa}{a\beta}\tfrac{1}{1-x}+\tfrac{2\alpha}{\beta}\tfrac{1}{a-x}\,dx\Big)
    =
    (1-z)^\frac{-2\kappa}{a\beta}\Big(\tfrac{a-z}{a}\Big)^\frac{-2\alpha}{\beta}
  \end{equation}
  and
  \begin{equation} \label{eq:invasion_S}
    S(z)=\int_0^z s(x)\,dx\leq z s(z).
  \end{equation}
  We will apply Theorem 5 from \cite{Hutzenthaler2009} to show the result.
  First, we verify that the assumptions of the aforementioned theorem are satisfied.
  Using \eqref{eq:invasion_S}, we see that
  \begin{equation} \label{eq:invasion_verify_1}
    \int_0^\frac{1}{2}S(y)\tfrac{2}{\beta(a-y)y(1-y)s(y)}\,dy
    \leq
    \int_0^\frac{1}{2}\tfrac{2}{\beta(a-y)(1-y)}\,dy
    \leq
    \tfrac{1}{2}\tfrac{2}{\beta(a-\frac{1}{2})(1-\frac{1}{2})}
    <\infty.
  \end{equation}
  Furthermore, we get
  \begin{equation} \begin{split} \label{eq:invasion_verify_2}
    \lim_{\eps\to 0}\int_{\eps}^{\frac{1}{2}}\tfrac{-\frac{\kappa}{a}(a-y)y-\alpha y(1-y)}{\frac{1}{2}\beta(a-y)y(1-y)}\,dy
    &=
    \lim_{\eps\to 0}\int_{\eps}^{\frac{1}{2}}\tfrac{-2\kappa}{a\beta(1-y)}-\tfrac{2\alpha}{\beta(a-y)}\,dy
    \\
    &
    =
    \lim_{\eps\to 0}\Big(\tfrac{2\kappa}{a\beta}(\ln(1-\tfrac{1}{2})-\ln(1-\eps))+\tfrac{2\alpha}{\beta}(\ln(a-\tfrac{1}{2})-\ln(a-\eps))\Big)
    \\
    &\quad
    =
    \tfrac{2\kappa}{a\beta}\ln(1-\tfrac{1}{2})+\tfrac{2\alpha}{\beta}(\ln(a-\tfrac{1}{2})-\ln(a))
    \in(-\infty,\infty).
  \end{split} \end{equation}
  From \eqref{eq:invasion_s} as well as the fact that $\tfrac{2\kappa}{a\beta}>0$ we see that
  \begin{equation} \begin{split} \label{eq:invasion_verify_3}
    \int_{\frac{1}{2}}^{1}\tfrac{\tilde{a}(y)}{\frac{1}{2}\beta(a-y)y(1-y) s(y)}\,dy
    &=
    \int_{\frac{1}{2}}^{1}\tfrac{\kappa a\frac{y}{a-y}}{\frac{1}{2}\beta(a-y)y(1-y)}
    (1-y)^\frac{2\kappa}{a\beta}\Big(\tfrac{a-y}{a}\Big)^\frac{2\alpha}{\beta}\,dy
    \\
    &=
    2\tfrac{\kappa a}{\beta}a^{-\frac{2\alpha}{\beta}}
    \int_{\frac{1}{2}}^{1}
    (1-y)^{\frac{2\kappa}{a\beta}-1}
    (a-y)^{\frac{2\alpha}{\beta}-2}
    \,dy
    \\
    &\leq
    2\tfrac{\kappa a}{\beta}a^{-\frac{2\alpha}{\beta}}
    \Big((a-\tfrac{1}{2})^{\frac{2\alpha}{\beta}-2}+(a-1)^{\frac{2\alpha}{\beta}-2}\Big)
    \int_{\frac{1}{2}}^{1}
    (1-y)^{\frac{2\kappa}{a\beta}-1}
    \,dy
    <\infty.
  \end{split} \end{equation}
  We obtain from \eqref{eq:invasion_verify_1}, \eqref{eq:invasion_verify_2}, and \eqref{eq:invasion_verify_3}
  together with a straightforward adaptation of Lemmas 9.6, 9.9, and 9.10 in \cite{Hutzenthaler2009} to the state space $[0,1]$
  that the assumptions of Theorem 5 in \cite{Hutzenthaler2009} are satisfied.
  Applying the aforementioned theorem shows that the total mass process dies out if and only if
 \begin{equation} \label{eq:invasion_application_thm5VIM}
   \int \int_0^\infty \tilde{a}(\chi_t)\,dt\,\mathrm{Q}_Y(d\chi) \leq 1.
  \end{equation}
  Moreover, a straight forward adaptation of Lemma 9.8 in \cite{Hutzenthaler2009} to the state space $[0,1]$
  together with  \eqref{eq:invasion_verify_1} shows that
  \begin{equation} \label{eq:invasion_application_lemma9.8VIM}
    \int \int_0^\infty \tilde{a}(\chi_t)\,dt\,\mathrm{Q}_Y(d\chi)
    =
    \int_0^1
    \tfrac{\kappa a\frac{y}{a-y}}{\frac{1}{2}\beta(a-y)y(1-y)}
    (1-y)^\frac{2\kappa}{a\beta}\Big(\tfrac{a-y}{a}\Big)^\frac{2\alpha}{\beta}\,dy.
  \end{equation}
  Observe that we have $\tfrac{2\kappa}{a\beta}\int_0^1(1-y)^{\frac{2\kappa}{a\beta}-1}\,dy=1$.
  Combining this with \eqref{eq:invasion_application_thm5VIM} and \eqref{eq:invasion_application_lemma9.8VIM}
  we see that the total mass process dies out if and only if
  \begin{equation} \begin{split} \label{eq:invasion_condition_extinction}
    0
    &\geq
    \int_0^1
    \tfrac{\kappa a\frac{y}{a-y}}{\frac{1}{2}\beta(a-y)y(1-y)}
    (1-y)^\frac{2\kappa}{a\beta}\Big(\tfrac{a-y}{a}\Big)^\frac{2\alpha}{\beta}\,dy
    -1
    \\
    &=
    \tfrac{2\kappa}{a\beta}
    \int_0^1
    (1-y)^{\frac{2\kappa}{a\beta}-1}\Big(\tfrac{a-y}{a}\Big)^{\frac{2\alpha}{\beta}-2}\,dy
    -1
    =
    \tfrac{2\kappa}{a\beta}\int_0^1
    (1-y)^{\frac{2\kappa}{a\beta}-1}
    \bigg(\Big(\tfrac{a-y}{a}\Big)^{\frac{2\alpha}{\beta}-2}-1\bigg)\,dy.
  \end{split} \end{equation}
  Consequently, the total mass process dies out if and only if $\alpha\geq\beta$.
  This completes the proof of Proposition \ref{prop:invasion}.
\end{proof}
\section*{Appendix}
In this appendix we prove
Lemmas \ref{lem:bound.norm.(H+P)^p}, \ref{lem:bound.norm.1/H^2}, and \ref{lem:bound.norm.1/P}.
The main step in the proofs of these lemmas is a comparison with 
the solution of
an ordinary differential equation whose derivative is  negative near infinity.
Solutions of such ordinary differential equations are bounded uniformly in time.

\begin{proof}[Proof of Lemma \ref{lem:bound.norm.(H+P)^p}]
If we assume  $\sup_{N\in\N}\E\left[\left\|\left(H_0^N+P_0^N\right)^p\right\|_{\sigma}\right]=\infty$, then the claim trivially holds.
For the remainder of the proof assume $\sup_{N\in\N}\E\left[\left\|\left(H_0^N+P_0^N\right)^p\right\|_{\sigma}\right]<\infty$.
Define $\D_0:=\emptyset$ and 
for every $n\in\N$ let $\D_n\subseteq\D$ be a set with $|\D_n|=\min\left\{n,|\D|\right\}$ and $\D_n\supseteq\D_{n-1}$.
Define real numbers
$c_0:=\min\left\{\tfrac{1}{2\eta}\tfrac{\lambda}{K},\tfrac{1}{4},\tfrac{1}{\delta}\gamma\right\}$,
$c_1:=p\left[2\eta\bar{\iota}_H+\delta\bar{\iota}_P+(p-1)\left(2\eta\betab_H+\tfrac{1}{2}\delta\betab_P\right)\right]\in(0,\infty)$,
$c_2:=\lambda p+(p-1+c)(\bar{\kappa}_{H}+\bar{\kappa}_{P})\in(0,\infty)$,
$c_3:=c_0p\left(\sum_{k\in\D}\sigma_k\right)^{-\tfrac{1}{p}}\in(0,\infty)$, and
$c_4:=c_1\left(\sum_{k\in\D}\sigma_k\right)^{\tfrac{1}{p}}\in(0,\infty)$.
For all $N\in\N$, $t\in[0,\infty)$ define
$Y_t^N:=2\eta H_t^N+\delta P_t^N$
and for all $N,n\in\N$ and all $t\in[0,\infty)$
let $M_t^{N,n}$ be a real-valued random variable such that $\P$-a.s.~it holds that
\begin{equation} \begin{split}
  M_t^{N,n}=
  \sum_{i\in\D_n}\sigma_i\Bigg(
  &\int_0^t
  2\eta p\left(Y_u^N(i)\right)^{p-1}\sqrt{\beta_H^N H_u^N(i)}
  \,dW_u^{H,N}(i)
  +
  \int_0^t
  \delta p\left(Y_u^N(i)\right)^{p-1}\sqrt{\beta_P^N P_u^N(i)}
  \,dW_u^{P,N}(i)\Bigg).
\end{split} \end{equation}
Applying It\^o's lemma
we get for all $N,n\in\N$ and all $t\in[0,\infty)$
that $\P$-a.s.
\begin{equation} \begin{split}
  &\sum_{i\in\D_n}\sigma_i\left(Y_t^N(i)\right)^p
  -\sum_{i\in\D_n}\sigma_i\left(Y_0^N(i)\right)^p
  \\
  &\quad
  =
  \sum_{i\in\D_n}\sigma_i\int_0^t
  2\eta p\left(Y_u^N(i)\right)^{p-1}
  \Bigg(\kappa_H^N\sum_{j\in\D} m(i,j)H_u^N(j)
  +(\lambda-\kappa_H^N-\alpha^NF_u^N(i))H_u^N(i)
  \\
  &\qquad\quad
  -\tfrac{\lambda}{K}\left(H_u^N(i)\right)^2
  -\delta P_u^N(i)H_u^N(i)
  +\iota_H^N\Bigg)
  +\delta p\left(Y_u^N(i)\right)^{p-1}
  \Bigg(\kappa_P^N\sum_{j\in\D} m(i,j)P_u^N(j)
  \\
  &\qquad\quad
  -(\kappa_P^N+\nu)P_u^N(i)
  -\gamma \left(P_u^N(i)\right)^2
  +\left(\eta-\rho F_u^N(i)\right)P_u^N(i)H_u^N(i)
  +\iota_P^N\Bigg)
  \\
  &\qquad\quad
  +\tfrac{1}{2}4\eta^2 p(p-1)\left(Y_u^N(i)\right)^{p-2}\beta_H^NH_u^N(i)
  +\tfrac{1}{2}\delta^2 p(p-1)\left(Y_u^N(i)\right)^{p-2}\beta_P^NP_u^N(i)
  \,du
  +M_t^{N,n}.
\end{split} \end{equation}
Because $1\geq c_0 4$, $\tfrac{\lambda}{K}\geq c_0 2\eta$, and $\gamma\geq \delta c_0$
we get for all $N,n\in\N$ and all $t\in[0,\infty)$
that $\P$-a.s.
\begin{equation} \begin{split}
  &\sum_{i\in\D_n}\sigma_i\left(Y_t^N(i)\right)^p
  -\sum_{i\in\D_n}\sigma_i\left(Y_0^N(i)\right)^p
  \\
  &\quad
  \leq
  \sum_{i\in\D_n}\sigma_i\int_0^t
  p\left(Y_u^N(i)\right)^{p-1}
  \Bigg(2\eta\bar{\kappa}_{H}\sum_{j\in\D} m(i,j)H_u^N(j)
  +2\eta\lambda H_u^N(i)
  -c_0(2\eta H_u^N(i))^2
  \\
  &\qquad\quad
  -\left[\eta\delta+c_04\eta\delta\right] P_u^N(i)H_u^N(i)
  +2\eta\bar{\iota}_H\Bigg)
  \\
  &\qquad\quad
  +p\left(Y_u^N(i)\right)^{p-1}
  \Bigg(\delta\bar{\kappa}_{P}\sum_{j\in\D} m(i,j)P_u^N(j)
  +\lambda\delta P_u^N(i)
  -c_0(\delta P_u^N(i))^2
  +\eta\delta P_u^N(i)H_u^N(i)
  +\delta\bar{\iota}_P\Bigg)
  \\
  &\qquad\quad
  + p(p-1)\left(Y_u^N(i)\right)^{p-2}
  \Bigg(
  \left(2\eta\betab_H+\tfrac{1}{2}\delta\betab_P\right) 2\eta H_u^N(i)
  +\left(\tfrac{1}{2}\delta\betab_P+2\eta\betab_H\right) \delta P_u^N(i)
  \Bigg)
  \,du
  +M_t^{N,n}
  \\
  &\quad
  =
  \int_0^t
  \sum_{i\in\D_n}\sigma_ip\left(Y_u^N(i)\right)^{p-1}2\eta\bar{\kappa}_{H}\sum_{j\in\D} m(i,j)H_u^N(j)
  +\sum_{i\in\D_n}\sigma_ip\left(Y_u^N(i)\right)^{p-1}
  \\
  &\qquad\quad
  \Bigg(\lambda\left(Y_u^N(i)\right)
  -c_0\left(Y_u^N(i)\right)^2
  +(2\eta\bar{\iota}_H+\delta\bar{\iota}_P)\Bigg)
  +\sum_{i\in\D_n}\sigma_ip\left(Y_u^N(i)\right)^{p-1}
  \delta\bar{\kappa}_{P}\sum_{j\in\D} m(i,j)P_u^N(j)
  \\
  &\qquad\quad
  +\sum_{i\in\D_n}\sigma_ip(p-1)\left(2\eta\betab_H+\tfrac{1}{2}\delta\betab_P\right)\left(Y_u^N(i)\right)^{p-1}
  \,du
  +M_t^{N,n}.
\end{split} \end{equation}
Using Young's inequality and Lemma \ref{lem:estimate.term.sum^p}
we get for all $N,n\in\N$ and all $t\in[0,\infty)$
that $\P$-a.s.
\begin{equation} \begin{split} \label{eq:bound.(H+P)^p.with.stoch.integral}
  &\sum_{i\in\D_n}\sigma_i\left(Y_t^N(i)\right)^p
  -\sum_{i\in\D_n}\sigma_i\left(Y_0^N(i)\right)^p
  \\
  &\quad
  \leq
  \int_0^t
  \sum_{i\in\D_n}\sigma_i\tfrac{p-1}{p}p\left(Y_u^N(i)\right)^p\bar{\kappa}_{H}
  +\sum_{i\in\D}\sigma_i\tfrac{1}{p}p\bar{\kappa}_{H}c\left(2\eta H_u^N(i)\right)^p
  +\sum_{i\in\D_n}\sigma_i\lambda p\left(Y_u^N(i)\right)^p
  \\
  &\qquad
  +\sum_{i\in\D_n}\sigma_ic_1\left(Y_u^N(i)\right)^{p-1}
  -\sum_{i\in\D_n}\sigma_ic_0p\left(Y_u^N(i)\right)^{p+1}
  +\sum_{i\in\D_n}\sigma_i\tfrac{p-1}{p}p\left(Y_u^N(i)\right)^p\bar{\kappa}_{P}
  \\
  &\qquad
  +\sum_{i\in\D}\sigma_i\tfrac{1}{p}p\bar{\kappa}_{P}c\left(\delta P_u^N(i)\right)^p
  \,du
  +M_t^{N,n}
  \\
  &\quad
  \leq
  \int_0^t
  \sum_{i\in\D}\sigma_ic_2\left(Y_u^N(i)\right)^p
  +\sum_{i\in\D_n}\sigma_ic_1\left(Y_u^N(i)\right)^{p-1}
  -\sum_{i\in\D_n}\sigma_ic_0p
  \left(Y_u^N(i)\right)^{p+1}
  \,du
  +M_t^{N,n}.
\end{split} \end{equation}
For $N,n,l\in\N$ define $[0,\infty]$-valued stopping times
\begin{equation}
  \tau_l^{N,n}:=\inf\left(\left\{t\in[0,\infty):\sum_{i\in\D_n}\sigma_i\left(Y_t^N(i)\right)^p>l\right\}\cup\infty\right).
\end{equation}
We now get for all $N,n,l\in\N$ and all $t\in[0,\infty)$ that
\begin{equation} \begin{split}
  &\E\left[\int_0^{t\land\tau_l^{N,n}}\sum_{i\in\D_n}\sigma_i\left[\left(2\eta p\left(Y_u^N(i)\right)^{p-1}\sqrt{\beta_H^N H_u^N(i)}\right)^2
  +\left(\delta p\left(Y_u^N(i)\right)^{p-1}\sqrt{\beta_P^N P_u^N(i)}\right)^2
  \right]\,du\right]
  \\
  &\quad
  =
  \E\left[\int_0^{t\land\tau_l^{N,n}}\sum_{i\in\D_n}\sigma_i
  p^2\left[2\eta\beta_H^N\left(\left(Y_u^N(i)\right)^{p-1}\sqrt{2\eta H_u^N(i)}\right)^2
  +\delta\beta_P^N\left(\left(Y_u^N(i)\right)^{p-1}\sqrt{\delta P_u^N(i)}\right)^2
  \right]\,du\right]
  \\
  &\quad
  \leq
  \left(2\eta\beta_H^N+\delta\beta_P^N\right)
  \E\left[\int_0^{t\land\tau_l^{N,n}}\sum_{i\in\D_n}\sigma_i
  p^2\left[\left(2\left(Y_u^N(i)\right)^{\frac{2p-1}{2}}\right)^2
  \right]\,du\right]
\end{split} \end{equation}
Using Young's inequality, we obtain for all $N,n,l\in\N$ and all $t\in[0,\infty)$ that
\begin{equation} \begin{split}
  &\E\left[\int_0^{t\land\tau_l^{N,n}}\sum_{i\in\D_n}\sigma_i\left[\left(2\eta p\left(Y_u^N(i)\right)^{p-1}\sqrt{\beta_H^N H_u^N(i)}\right)^2
  +\left(\delta p\left(Y_u^N(i)\right)^{p-1}\sqrt{\beta_P^N P_u^N(i)}\right)^2
  \right]\,du\right]
  \\
  &\quad
  \leq
  \left(2\eta\beta_H^N+\delta\beta_P^N\right)
  \E\left[\int_0^{t\land\tau_l^{N,n}}\sum_{i\in\D_n}\sigma_i
  \left[4p^2\left(\tfrac{2p-1}{2p}\left(Y_u^N(i)\right)^p+\tfrac{1}{2p}\right)^2
  \right]\,du\right]
  \\
  &\quad
  \leq
  \left(2\eta\beta_H^N+\delta\beta_P^N\right)
  \E\left[\int_0^{t\land\tau_l^{N,n}}\sum_{i\in\D_n}\tfrac{\sigma_i^2}{\min\limits_{k\in\D_n}\sigma_k}
  \left[\left((2p-1)\left(Y_u^N(i)\right)^p+1\right)^2
  \right]\,du\right]
  \\
  &\quad
  \leq
  \tfrac{2\eta\beta_H^N+\delta\beta_P^N}{\min\limits_{k\in\D_n}\sigma_k}
  \E\left[\int_0^{t\land\tau_l^{N,n}}
  \left[\left(\sum_{i\in\D_n}\sigma_i\left((2p-1)\left(Y_u^N(i)\right)^p+1\right)\right)^2
  \right]\,du\right]
  \\
  &\quad
  \leq
  \tfrac{2\eta\beta_H^N+\delta\beta_P^N}{\min\limits_{k\in\D_n}\sigma_k}
  \E\left[\int_0^t
  \left[\left((2p-1)\sum_{i\in\D_n}\sigma_i\left(Y_{u\land\tau_l^{N,n}}^N(i)\right)^p+\|\underline{1}\|_\sigma\right)^2
  \right]\,du\right]
  \\
  &\quad
  \leq
  \tfrac{2\eta\beta_H^N+\delta\beta_P^N}{\min\limits_{k\in\D_n}\sigma_k}
  t\left[\left((2p-1)l+\|\underline{1}\|_\sigma\right)^2
  \right]
  <\infty.
\end{split} \end{equation}
Hence, we get for all $N,n,l\in\N$ and all $t\in[0,\infty)$ that
$\E\left[M_{t\land\tau_l^{N,n}}^{N,n}\right]=0$.
From this and \eqref{eq:bound.(H+P)^p.with.stoch.integral} and
using Tonelli's theorem we see 
for all $N,n,l\in\N$ and all $t\in[0,\infty)$ that
\begin{equation} \begin{split} \label{eq:bound.terms.Y}
  &\E\left[\sum_{i\in\D_n}\sigma_i\left(Y_{t\land\tau_l^{N,n}}^N(i)\right)^p
  +
  \int_0^{t\land\tau_l^{N,n}}
  c_0p\sum_{i\in\D_n}\sigma_i\left(Y_u^N(i)\right)^{p+1}
  \,du\right]
  \\
  &\quad
  \leq
  \E\left[\left\|\left(Y_0^N\right)^p\right\|_\sigma\right]
  +
  \E\left[\int_0^{t\land\tau_l^{N,n}}
  c_2\left\|\left(Y_u^N\right)^p\right\|_\sigma
  +c_1\left\|\left(Y_u^N\right)^{p-1}\right\|_\sigma
  \,du\right]
  \\
  &\quad
  \leq
  \E\left[\left\|\left(Y_0^N\right)^p\right\|_\sigma\right]
  +
  \int_0^t
  c_2\E\left[\left\|\left(Y_u^N\right)^p\right\|_\sigma\right]
  +c_1\E\left[\left\|\left(Y_u^N\right)^{p-1}\right\|_\sigma\right]
  \,du.
\end{split} \end{equation}
For every $N,n\in\N$ the map $[0,\infty)\ni t \mapsto \sum_{i\in\D_n}\sigma_i\left(Y_t^N(i)\right)^p\in\R$
is $\P$-a.s.~continuous which implies for all $N,n\in\N$ and all $t\in[0,\infty)$ that
$\P\left[\lim_{l\to\infty}\tau_l^{N,n}<t\right]=0$.
From Tonelli's theorem and monotone convergence, then using
Fatou's lemma, and finally applying \eqref{eq:bound.terms.Y}
we see for all $N\in\N$ and all $t\in[0,\infty)$ that
\begin{equation} \begin{split}
  &\E\left[\sum_{i\in\D}\sigma_i\left(Y_t^N(i)\right)^p\right]
  +
  \int_0^t
  c_0p\E\left[\sum_{i\in\D}\sigma_i\left(Y_u^N(i)\right)^{p+1}\right]
  \,du
  \\
  &\quad
  =
  \lim_{n\to\infty}
  \E\left[\sum_{i\in\D_n}\sigma_i\left(Y_t^N(i)\right)^p
  +
  \int_0^t
  c_0p\sum_{i\in\D_n}\sigma_i\left(Y_u^N(i)\right)^{p+1}
  \,du\right]
  \\
  &\quad
  =
  \lim_{n\to\infty}
  \E\left[\lim_{l\to\infty}\left(\sum_{i\in\D_n}\sigma_i\left(Y_{t\land\tau_l^{N,n}}^N(i)\right)^p
  +
  \int_0^{t\land\tau_l^{N,n}}
  c_0p\sum_{i\in\D_n}\sigma_i\left(Y_u^N(i)\right)^{p+1}
  \,du\right)\right]
  \\
  &\quad
  \leq
  \lim_{n\to\infty}
  \liminf\limits_{l\to\infty}
  \E\left[\sum_{i\in\D_n}\sigma_i\left(Y_{t\land\tau_l^{N,n}}^N(i)\right)^p
  +
  \int_0^{t\land\tau_l^{N,n}}
  c_0p\sum_{i\in\D_n}\sigma_i\left(Y_u^N(i)\right)^{p+1}
  \,du\right]
  \\
  &\quad
  \leq
  \E\left[\left\|\left(Y_0^N\right)^p\right\|_\sigma\right]
  +
  \int_0^t
  c_2\E\left[\left\|\left(Y_u^N\right)^p\right\|_\sigma\right]
  +c_1\E\left[\left\|\left(Y_u^N\right)^{p-1}\right\|_\sigma\right]
  \,du.
\end{split} \end{equation}
This implies using Jensen's inequality for all $N\in\N$ and all $t\in[0,\infty)$ that we get
\begin{equation} \begin{split}
  &\E\left[\left\|\left(Y_t^N\right)^p\right\|_\sigma\right]
  -
  \E\left[\left\|\left(Y_0^N\right)^p\right\|_\sigma\right]
  \\
  &\quad
  \leq
  \int_0^t
  c_2\E\left[\sum_{i\in\D}\sigma_i\left(Y_u^N(i)\right)^p\right]
  +c_1\E\left[\sum_{i\in\D}\sigma_i\left(Y_u^N(i)\right)^{p-1}\right]
  -c_0p\E\left[\sum_{i\in\D}\sigma_i\left(Y_u^N(i)\right)^{p+1}\right]
  \,du
  \\
  &\quad
  =
  \int_0^t
  c_2
  \E\left[\sum_{i\in\D}\sigma_i\left(Y_u^N(i)\right)^p\right]
  +
  \tfrac{\sum_{k\in\D}\sigma_k}{\sum_{l\in\D}\sigma_l}
  \left(
  c_1
  \E\left[\sum_{i\in\D}\sigma_i\left(Y_u^N(i)\right)^{p-1}\right]
  -c_0p
  \E\left[\sum_{i\in\D}\sigma_i\left(Y_u^N(i)\right)^{p+1}\right]
  \right)
  \,du
  \\
  &\quad
  \leq
  \int_0^t
  c_2
  \E\left[\sum_{i\in\D}\sigma_i\left(Y_u^N(i)\right)^p\right]
  +c_4
  \left(\E\left[\sum_{i\in\D}\sigma_i\left(Y_u^N(i)\right)^p\right]\right)^{\frac{p-1}{p}}
  -c_3
  \left(\E\left[\sum_{i\in\D}\sigma_i\left(Y_u^N(i)\right)^p\right]\right)^{\frac{p+1}{p}}
  \,du
  \\
  &\quad
  =
  \int_0^t
  \left(\E\left[\left\|\left(Y_u^N\right)^p\right\|_{\sigma}\right]\right)^{\frac{p-1}{p}}
  \left\{
  c_4
  +c_2
  \left(\E\left[\left\|\left(Y_u^N\right)^p\right\|_{\sigma}\right]\right)^{\frac{1}{p}}
  -c_3
  \left(\E\left[\left\|\left(Y_u^N\right)^p\right\|_{\sigma}\right]\right)^{\frac{2}{p}}
  \right\}
  \,du.
\end{split} \end{equation}
For every $N\in\N$ let $z^N\colon [0,\infty)\to\R$ be a process that for all $t\in[0,\infty)$ satisfies
\begin{equation} \begin{split}
  &z_t^N=
  z_0^N
  +
  \int_0^t\left(z_s^N\right)^{\frac{p-1}{p}}
  \left\{
  c_4
  +c_2 \left(z_s^N\right)^{\frac{1}{p}}
  -c_3\left(z_s^N\right)^{\frac{2}{p}}
  \right\}\,ds
\end{split} \end{equation}
with
$z_0^N=\E\left[\left\|\left(Y_0^N\right)^p\right\|_{\sigma}\right]$,
where uniqueness follows from local Lipschitz continuity.
Using classical comparison results from the theory of ODEs,
the above computation shows that for all $N\in\N$ and all $t\in[0,\infty)$ we have
$\E\left[\left\|\left(Y_t^N\right)^p\right\|_{\sigma}\right]\leq z_t^N$
and for all $N\in\N$ we have
$\sup_{t\in[0,\infty)}z_t^N=\max\Big\{\E\left[\left\|\left(Y_0^N\right)^p\right\|_{\sigma}\right]
,\Big(\tfrac{c_2}{2c_3}+\sqrt{\tfrac{(c_2)^2}{4c_3^2}+\tfrac{c_4}{c_3}}\Big)^p\Big\}$.
We thereby conclude that
\begin{equation} \begin{split}
  &\sup\limits_{N\in\N}\sup\limits_{t\in[0,\infty)}
  E\left[\left\|\left(2\eta H_t^N+\delta P_t^N\right)^p\right\|_{\sigma}\right]
  \leq
  \sup\limits_{N\in\N}\sup\limits_{t\in[0,\infty)}z_t^N
  \leq
  \sup\limits_{N\in\N}\E\left[\left\|\left(Y_0^N\right)^p\right\|_{\sigma}\right]
  +\left(\tfrac{c_2}{2c_3}+\sqrt{\tfrac{c_2^2}{4c_3^2}+\tfrac{c_4}{c_3}}\right)^p
  \\
  &\quad
  =
  \sup\limits_{N\in\N}\E\left[\left\|\left(Y_0^N\right)^p\right\|_{\sigma}\right]
  +\tfrac{c_2}{2c_3}\left(1+\sqrt{1+\tfrac{c_4c_3}{c_2^2}}\right)^p
  =
  \sup\limits_{N\in\N}\E\left[\left\|\left(2\eta H_0^N+\delta P_0^N\right)^p\right\|_{\sigma}\right]
  \\
  &\qquad\quad
  +\|\underline{1}\|_\sigma
  \left(\tfrac{\lambda+\left(1-\frac{1}{p}+\frac{c}{p}\right)\left(\bar{\kappa}_H+\bar{\kappa}_P\right)}
  {2\min\left\{\tfrac{1}{2\eta}\tfrac{\lambda}{K},\tfrac{1}{4},\tfrac{1}{\delta}\gamma\right\}}
  \right)^p
  \left(1+\sqrt{1+
  \tfrac{4\min\left\{\tfrac{1}{2\eta}\tfrac{\lambda}{K},\tfrac{1}{4},\tfrac{1}{\delta}\gamma\right\}\left[2\eta\bar{\iota}_H+
    \delta\bar{\iota}_P+(p-1)\left(2\eta\betab_H+\tfrac{1}{2}\delta\betab_P\right)\right]}
  {\left(\lambda+\left(1-\frac{1}{p}+\frac{c}{p}\right)\left(\bar{\kappa}_H+\bar{\kappa}_P\right)\right)^2}}\right)^p.
\end{split} \end{equation}
This completes the proof of Lemma \ref{lem:bound.norm.(H+P)^p}.
\end{proof}
\begin{proof}[Proof of Lemma \ref{lem:bound.norm.1/H^2}]
If the right-hand side of \eqref{eq:claim.lemma.1/H^2} is infinite, then the claim trivially holds.
For the remainder of the proof assume the right-hand side of \eqref{eq:claim.lemma.1/H^2} to be finite.
Define $\D_0:=\emptyset$ and 
for every $n\in\N$ let $\D_n\subseteq\hat{\D}$ be a set with $|\D_n|=\min\big\{n,|\hat{\D}|\big\}$ and $\D_n\supseteq\D_{n-1}$.
Define $c_1:=\tfrac{1}{\lambda+\nu}$ and for all $n\in\N$ let
\begin{equation}
  c_0^n:=
  \tfrac{2c_1\bar{\kappa}_{P}c}{3}
  \sup\limits_{N\in\N}\sup\limits_{t\in[0,\infty)}\E\left[\sum_{i\in\D_n}\sigma_i\left(P_t^N(i)\right)^3\right]
  +2c_1\Bigg[\tfrac{\eta^2}{\lambda}
  +\tfrac{4\lambda}{K^2}
  \Bigg]
  \sup\limits_{N\in\N}\sup\limits_{t\in[0,\infty)}\E\left[\sum_{i\in\D_n}\sigma_iP_t^N(i)\right]
  +\tfrac{\lambda}{K^2\delta}.
\end{equation}
For $N,n,l\in\N$ define $[0,\infty]$-valued stopping times
\begin{equation}
  \tau_l^{N,n}:=\inf\left(\left\{t\in[0,\infty):\sum_{i\in\D_n}\sigma_i\left(P_t^N(i)+\left(H_t^N(i)\right)^{-1}\right)>l\right\}\cup\infty\right).
\end{equation}
We infer from Lemma \ref{lem:H.positive}
that for all $N,n\in\N$ the map $[0,\infty)\ni t\mapsto \sum_{i\in\D_n}\sigma_i\left(P_t^N(i)+\left(H_t^N(i)\right)^{-1}\right)\in\R$ is $\P$-a.s.~continuous.
Thereby, we have for all $t\in[0,\infty)$ and all $N,n\in\N$ that
\begin{equation} \label{eq:lim.tau>t}
  \P\left[\lim_{l\to\infty}\tau_l^{N,n}<t\right]=0.
\end{equation}
For all $t\in[0,\infty)$, $N,n,l\in\N$ applying Young's inequality we get
\begin{equation} \begin{split}
  &\E\left[\sum_{i\in\D_n}\sigma_i
  \int_0^{t\land\tau_l^{N,n}}  
  \left(2c_1\tfrac{\sqrt{\beta_P^N P_u^N(i)}}{\left(H_u^N(i)\right)^2}\right)^2\,du
  \right]
  \\
  &\quad
  \leq
  \E\left[
  \sum_{i\in\D_n}\tfrac{\sigma_i^5}{\min\limits_{k\in\D_n}\{\sigma_k^4\}}
  t\sup_{u\in[0,t]}
  4c_1^2\betab_P
  \left(\tfrac{1}{5}\left(P_{u\land\tau_l^{N,n}}^N(i)\right)^5
  +\tfrac{4}{5}\left(H_{u\land\tau_l^{N,n}}^N(i)\right)^{-5}\right)
  \right]
  \\
  &\quad
  \leq
  \tfrac{t4c_1^2\betab_P}{\min\limits_{k\in\D_n}\{\sigma_k^4\}}
  \E\left[
  \sup_{u\in[0,t]}
  \left(\sum_{i\in\D_n}\sigma_i\left(P_{u\land\tau_l^{N,n}}^N(i)+\left(H_{u\land\tau_l^{N,n}}^N(i)\right)^{-1}\right)\right)^5
  \right]
  \leq
  \tfrac{t4c_1^2\betab_P}{\min\limits_{k\in\D_n}\{\sigma_k^4\}}
  l^5
  <\infty
\end{split} \end{equation}
and
\begin{equation} \begin{split}
  &\E\left[\sum_{i\in\D_n}\sigma_i
  \int_0^{t\land\tau_l^{N,n}}
  \left(\left(4c_1P_u^N(i)+\tfrac{1}{\delta}\right)
  \tfrac{\sqrt{\beta_H^N H_u^N(i)}}{\left(H_u^N(i)\right)^3}\right)^2\,du
  \right]
  \\
  &\quad
  \leq
  \E\left[\sum_{i\in\D_n}\tfrac{\sigma_i^7}{\min\limits_{k\in\D_n}\{\sigma_k^6\}}
  t\sup_{u\in[0,t]}
  (4c_1+\tfrac{1}{\delta})^2
  \betab_H\left(
  \tfrac{2}{7}\left(P_{u\land\tau_l^{N,n}}^N(i)+1\right)^7
  +\tfrac{5}{7}\left(H_{u\land\tau_l^{N,n}}^N(i)\right)^{-7}\right)
  \right]
  \\
  &\quad
  \leq
  \tfrac{t(4c_1+\frac{1}{\delta})^2\betab_H}{\min\limits_{k\in\D_n}\{\sigma_k^6\}}
  \E\left[
  \sup_{u\in[0,t]}
  \left(
  \sum_{i\in\D_n}\sigma_i
  \left(
  P_{u\land\tau_l^{N,n}}^N(i)+1
  +\left(H_{u\land\tau_l^{N,n}}^N(i)\right)^{-1}
  \right)\right)^7
  \right]
  \\
  &\quad
  \leq
  \tfrac{t(4c_1+\frac{1}{\delta})^2\betab_H}{\min\limits_{k\in\D_n}\{\sigma_k^6\}}
  \left(
  l+
  \left\|\underline{1}\right\|_\sigma
  \right)^7
  <\infty.
\end{split} \end{equation}
Hence, we obtain for all $t\in[0,\infty)$ and all $N,n,l\in\N$ that
\begin{equation} \begin{split} \label{eq:lem.1/H^2+P/H^2.E.stoch.int.0}
  &\E\left[\sum_{i\in\D_n}\sigma_i
  \int_0^{t\land\tau_l^{N,n}}
  2c_1\tfrac{\sqrt{\beta_P^N P_u^N(i)}}{\left(H_u^N(i)\right)^2}
  \,dW_u^{P,N}(i)\right]
  =0,
  \\
  &\E\left[\sum_{i\in\D_n}\sigma_i
  \int_0^{t\land\tau_l^{N,n}}
  \left(4c_1P_u^N(i)+\tfrac{1}{\delta}\right)
  \tfrac{\sqrt{\beta_H^N H_u^N(i)}}{\left(H_u^N(i)\right)^3}
  \,dW_u^{H,N}(i)\right]
  =0.
\end{split} \end{equation}
Define the function $y\colon\N\times\N\times[0,\infty)\to[0,\infty]$ by
\begin{equation}
  \N\times\N\times[0,\infty)\ni (N,n,t)\mapsto y_t^{N,n}:=\E\left[\sum_{i\in\D_n}\sigma_i\left(2c_1\tfrac{P_t^N(i)}{\left(H_t^N(i)\right)^2}
  +\tfrac{1}{2\delta}\tfrac{1}{\left(H_t^N(i)\right)^2}\right)\right].
\end{equation}
Recall from the beginning of the proof that we assume for all $N,n\in\N$ that $y_0^{N,n}<\infty$.
Now, applying It\^o's lemma and using \eqref{eq:lem.1/H^2+P/H^2.E.stoch.int.0}, we obtain
for all $t\in[0,\infty)$ and all $N,n,l\in\N$ that
\begin{equation} \begin{split}
  &\E\left[\sum_{i\in\D_n}\sigma_i\left(2c_1\tfrac{P_{t\land\tau_l^{N,n}}^N(i)}{(H_{t\land\tau_l^{N,n}}^N(i))^2}
  +\tfrac{1}{2\delta}\tfrac{1}{(H_{t\land\tau_l^{N,n}}^N(i))^2}\right)\right]
  -y_0^{N,n}
  \\
  &\quad
  =
  \E\Bigg[\sum_{i\in\D_n}\sigma_i\int_0^{t\land\tau_l^{N,n}}
  2c_1\tfrac{1}{\left(H_u^N(i)\right)^2}
  \Bigg(\kappa_P^N\sum_{j\in\D} m(i,j)P_u^N(j)
  -(\kappa_P^N+\nu)P_u^N(i)
  -\gamma \left(P_u^N(i)\right)^2
  \\
  &\qquad
  +\left(\eta-\rho F_u^N(i)\right)P_u^N(i)H_u^N(i)
  +\iota_P^N\Bigg)
  -\left(2c_1\tfrac{2P_u^N(i)}{\left(H_u^N(i)\right)^3}+\tfrac{1}{2\delta}\tfrac{2}{\left(H_u^N\right)^3}\right)
  \Bigg(\kappa_H^N\sum_{j\in\D} m(i,j)H_u^N(j)
  \\
  &\qquad
  +(-\kappa_H^N+\lambda-\alpha^NF_u^N(i))H_u^N(i)
  -\tfrac{\lambda}{K}\left(H_u^N(i)\right)^2
  -\delta P_u^N(i)H_u^N(i)
  +\iota_H^N\Bigg)
  \\
  &\qquad
  +\tfrac{1}{2}2c_1\tfrac{6P_u^N(i)}{\left(H_u^N(i)\right)^4}\beta_H^NH_u^N(i)
  +\tfrac{1}{2}\tfrac{1}{2\delta}\tfrac{6}{\left(H_u^N(i)\right)^4}\beta_H^NH_u^N(i)
  \,du\Bigg].
\end{split} \end{equation}
Dropping some negative terms, we now get
for all $t\in[0,\infty)$ and all $N,n,l\in\N$ that
\begin{equation} \begin{split}
    &\E\left[\sum_{i\in\D_n}\sigma_i\left(2c_1\tfrac{P_{t\land\tau_l^{N,n}}^N(i)}{(H_{t\land\tau_l^{N,n}}^N(i))^2}
    +\tfrac{1}{2\delta}\tfrac{1}{(H_{t\land\tau_l^{N,n}}^N(i))^2}\right)\right]
    -y_0^{N,n}
    \\
    &\quad
    \leq
    \E\Bigg[\sum_{i\in\D_n}\sigma_i\int_0^{t\land\tau_l^{N,n}}
    \tfrac{2c_1}{\left(H_u^N(i)\right)^2}
    \left(\kappa_P^N\sum_{j\in\D} m(i,j)P_u^N(j)
    -\nu P_u^N(i)
    -\gamma \left(P_u^N(i)\right)^2
    +\eta P_u^N(i)H_u^N(i)
    +\iota_P^N\right)
    \\
    &\qquad
    -\left(4c_1\tfrac{P_u^N(i)}{\left(H_u^N(i)\right)^3}+\tfrac{1}{\delta}\tfrac{1}{\left(H_u^N\right)^3}\right)
    \left(
    (-\kappa_H^N+\lambda-\alpha^N)H_u^N(i)
    -\tfrac{\lambda}{K}\left(H_u^N(i)\right)^2
    -\delta P_u^N(i)H_u^N(i)
    +\iota_H^N\right)
    \\
    &\qquad
    +6c_1\tfrac{P_u^N(i)}{\left(H_u^N(i)\right)^3}\beta_H^N+\tfrac{3}{2\delta}\tfrac{1}{\left(H_u^N(i)\right)^3}\beta_H^N
    \,du\Bigg]
    \\
    &\quad
    =
    \E\Bigg[\sum_{i\in\D_n}\sigma_i\int_0^{t\land\tau_l^{N,n}}
    2c_1\Bigg(\kappa_P^N\tfrac{1}{\left(H_u^N(i)\right)^2}\sum_{j\in\D} m(i,j)P_u^N(j)
    -\nu\tfrac{P_u^N(i)}{\left(H_u^N(i)\right)^2}
    -\gamma\tfrac{\left(P_u^N(i)\right)^2}{\left(H_u^N(i)\right)^2}
    +\eta\tfrac{P_u^N(i)}{H_u^N(i)}
    \\
    &\qquad
    +\iota_P^N\tfrac{1}{\left(H_u^N(i)\right)^2}
    -2\left(-\kappa_H^N+\lambda-\alpha^N\right)\tfrac{P_u^N(i)}{\left(H_u^N(i)\right)^2}
    +2\tfrac{\lambda}{K}\tfrac{P_u^N(i)}{H_u^N(i)}
    +2\delta\tfrac{\left(P_u^N(i)\right)^2}{\left(H_u^N(i)\right)^2}
    -2\iota_H^N\tfrac{P_u^N(i)}{\left(H_u^N(i)\right)^3}
    \\
    &\qquad
    +3\tfrac{P_u^N(i)}{\left(H_u^N(i)\right)^3}\beta_H^N
    \Bigg)
    +\tfrac{\kappa_H^N-\lambda+\alpha^N}{\delta}\tfrac{1}{\left(H_u^N\right)^2}
    +\tfrac{\lambda}{K\delta}\tfrac{1}{H_u^N}
    +\tfrac{P_u^N(i)}{\left(H_u^N\right)^2}
    -\tfrac{\iota_H^N}{\delta}\tfrac{1}{\left(H_u^N\right)^3}
    +\tfrac{3 \beta_H^N}{2\delta}\tfrac{1}{\left(H_u^N(i)\right)^3}
    \,du\Bigg].
  \end{split} \end{equation}
Using Young's inequality as well as Lemma \ref{lem:estimate.term.sum^p}
we get for all $t\in[0,\infty)$ and all $N,n,l\in\N$ that
\begin{equation} \begin{split}
  &\E\left[\sum_{i\in\D_n}\sigma_i\left(2c_1\tfrac{P_{t\land\tau_l^{N,n}}^N(i)}{(H_{t\land\tau_l^{N,n}}^N(i))^2}
  +\tfrac{1}{2\delta}\tfrac{1}{(H_{t\land\tau_l^{N,n}}^N(i))^2}\right)\right]-y_0^{N,n}
  \\
  &\quad
  \leq
  \E\Bigg[\sum_{i\in\D_n}\sigma_i\int_0^{t\land\tau_l^{N,n}}
  2c_1
  \Bigg(
  \tfrac{2}{3}\kappa_P^N\tfrac{1}{\left(H_u^N(i)\right)^3}+\tfrac{1}{3}\kappa_P^Nc\left(P_u^N(i)\right)^3
  -\nu\tfrac{P_u^N(i)}{\left(H_u^N(i)\right)^2}
  -\gamma\tfrac{\left(P_u^N(i)\right)^2}{\left(H_u^N(i)\right)^2}
  \\
  &\qquad
  +\tfrac{1}{2}\tfrac{\lambda}{2\eta}\eta\tfrac{P_u^N(i)}{\left(H_u^N(i)\right)^2}
  +\tfrac{1}{2}\tfrac{2\eta}{\lambda}\eta P_u^N(i)
  +\iota_P^N\tfrac{1}{\left(H_u^N(i)\right)^2}
  -2(-\kappa_H^N+\lambda-\alpha^N)\tfrac{P_u^N(i)}{\left(H_u^N(i)\right)^2}
  +\tfrac{1}{2}\tfrac{K}{4}2\tfrac{\lambda}{K}\tfrac{P_u^N(i)}{\left(H_u^N(i)\right)^2}
  \\
  &\qquad
  +\tfrac{1}{2}\tfrac{4}{K}2\tfrac{\lambda}{K}P_u^N(i)
  +2\delta\tfrac{\left(P_u^N(i)\right)^2}{\left(H_u^N(i)\right)^2}
  -2\iota_H^N\tfrac{P_u^N(i)}{\left(H_u^N(i)\right)^3}
  +3\tfrac{P_u^N(i)}{\left(H_u^N(i)\right)^3}\beta_H^N
  \Bigg)
  +\tfrac{\kappa_H^N-\lambda+\alpha^N}{\delta}\tfrac{1}{\left(H_u^N\right)^2}
  \\
  &\qquad
  +\tfrac{1}{2}\tfrac{K}{2}\tfrac{\lambda}{K\delta}\tfrac{1}{\left(H_u^N\right)^2}+\tfrac{1}{2}\tfrac{2}{K}\tfrac{\lambda}{K\delta}
  +\tfrac{P_u^N(i)}{\left(H_u^N\right)^2}
  -\tfrac{\iota_H^N}{\delta}\tfrac{1}{\left(H_u^N\right)^3}
  +\tfrac{3\beta_H^N}{2\delta}\tfrac{1}{\left((H_u^N(i)\right)^3}
  \,du\Bigg]
  \\
  &\quad
  =
  \E\Bigg[\sum_{i\in\D_n}\sigma_i\int_0^{t\land\tau_l^{N,n}}
  \left[\tfrac{4c_1}{3}\kappa_P^N
  -\tfrac{1}{\delta}\iota_H^N
  +\tfrac{3}{2\delta}\beta_H^N
  \right]
  \tfrac{1}{\left(H_u^N(i)\right)^3}
  +\tfrac{2c_1\kappa_P^Nc}{3}\left(P_u^N(i)\right)^3
  +\Big[c_1\Big(
  -2\nu
  +\tfrac{\lambda}{2}
  \\
  &\qquad
  +4(\kappa_H^N-\lambda+\alpha^N)
  +\tfrac{\lambda}{2}
  \Big)
  +1
  \Big]
  \tfrac{P_u^N(i)}{\left(H_u^N(i)\right)^2}
  +2c_1
  \left[-\gamma+2\delta\right]
  \tfrac{\left(P_u^N(i)\right)^2}{\left(H_u^N(i)\right)^2}
  +2c_1\left[\tfrac{\eta^2}{\lambda}
  +\tfrac{4\lambda}{K^2}
  \right]
  P_u^N(i)
  \\
  &\qquad
  +\left[2c_1\iota_P^N
  +\tfrac{\kappa_H^N-\lambda+\alpha^N}{\delta}
  +\tfrac{\lambda}{4\delta}
  \right]
  \tfrac{1}{\left(H_u^N(i)\right)^2}
  +2c_1\left[
  -2\iota_H^N
  +3\beta_H^N
  \right]
  \tfrac{P_u^N(i)}{\left(H_u^N(i)\right)^3}
  +\tfrac{\lambda}{K^2\delta}
  \,du\Bigg].
\end{split} \end{equation}
Recall $\bar{\kappa}_P=\sup_{N\in\N}\kappa_P^N$
and that for all $N\in\N$ we have
$\alpha^N+\kappa_H^N\leq\tfrac{\lambda}{4}$,
$\iota_P^N\leq\tfrac{\lambda(\nu+\lambda)}{8\delta}$,
and $\iota_H^N\geq \tfrac{4\delta\kappa_P^N}{3(\nu+\lambda)}+\tfrac{3\beta_H^N}{2}$.
Furthermore, note that $\tfrac{\lambda}{2}\leq\tfrac{1}{2c_1}$.
Together with the assumption that $\gamma\geq 2\delta$
we see for all $t\in[0,\infty)$ and all $N,n,l\in\N$ that
\begin{equation} \begin{split}
  &\E\left[\sum_{i\in\D_n}\sigma_i\left(2c_1\tfrac{P_{t\land\tau_l^{N,n}}^N(i)}{(H_{t\land\tau_l^{N,n}}^N(i))^2}
  +\tfrac{1}{2\delta}\tfrac{1}{(H_{t\land\tau_l^{N,n}}^N(i))^2}\right)\right]
  -y_0^{N,n}
  \\
  &\quad
  \leq
  \E\Bigg[\sum_{i\in\D_n}\sigma_i\int_0^{t\land\tau_l^{N,n}}
  \tfrac{2c_1}{3}\bar{\kappa}_{P}c\left(P_u^N(i)\right)^3
  -\tfrac{P_u^N(i)}{\left(H_u^N(i)\right)^2}
  +2c_1\left[\tfrac{\eta^2}{\lambda}
  +\tfrac{4\lambda}{K^2}
  \right]
  P_u^N(i)
  -\tfrac{\lambda}{4\delta}\tfrac{1}{\left(H_u^N(i)\right)^2}
  +\tfrac{\lambda}{K^2\delta}
  du\Bigg]
  \\
  &\quad
  \leq
  \int_0^t
  c_0^n
  \,du
  -
  \E\left[
  \sum_{i\in\D_n}\sigma_i
  \int_0^{t\land\tau_l^{N,n}}
  \tfrac{\lambda}{2}
  \left(2c_1\tfrac{P_u^N(i)}{\left(H_u^N(i)\right)^2}
  +\tfrac{1}{2\delta}\tfrac{1}{\left(H_u^N(i)\right)^2}\right)
  \,du\right].
\end{split} \end{equation}
Using Tonelli's theorem, Fatou's lemma, and \eqref{eq:lim.tau>t} this implies for all $t\in[0,\infty)$ and all $N,n\in\N$ that
\begin{equation} \begin{split}
  &y_t^{N,n}
  +
  \int_0^t
  \tfrac{\lambda}{2}
  y_u^{N,n}
  \,du
  =
  y_t^{N,n}
  +\E\left[
  \sum_{i\in\D_n}\sigma_i
  \int_0^t
  \tfrac{\lambda}{2}
  \left(2c_1\tfrac{P_u^N(i)}{\left(H_u^N(i)\right)^2}
  +\tfrac{1}{2\delta}\tfrac{1}{\left(H_u^N(i)\right)^2}\right)
  \,du\right]
  \\
  &\quad
  \leq
  \liminf\limits_{l\to\infty}
  \left(
  \E\left[\sum_{i\in\D_n}\sigma_i\left(2c_1\tfrac{P_{t\land\tau_l^{N,n}}^N(i)}{(H_{t\land\tau_l^{N,n}}^N(i))^2}
  +\tfrac{1}{2\delta}\tfrac{1}{(H_{t\land\tau_l^{N,n}}^N(i))^2}\right)\right]
  \right.
  \\
  &\qquad\qquad\qquad\qquad
  \left.
  +\E\left[
  \sum_{i\in\D_n}\sigma_i
  \int_0^{t\land\tau_l^{N,n}}
  \tfrac{\lambda}{2}
  \left(2c_1\tfrac{P_u^N(i)}{\left(H_u^N(i)\right)^2}
  +\tfrac{1}{2\delta}\tfrac{1}{\left(H_u^N(i)\right)^2}\right)
  \,du\right]
  \right)
  \leq
  y_0^{N,n}
  +\int_0^tc_0^n\,du.
\end{split} \end{equation}
For every $N,n\in\N$ let $z^{N,n}\colon[0,\infty)\to\R$ be a process
that for all $t\in[0,\infty)$ satisfies
$z_t^{N,n}=z_0^{N,n}+\int_0^t\big(c_0^n-\tfrac{\lambda}{2}z_s^{N,n}\big)\,ds$
with $z_0^{N,n}=y_0^{N,n}$, where uniqueness follows from local Lipschitz continuity.
Due to classical comparison results of the theory of ODEs,
the above computation yields for all $t\in[0,\infty)$ and all $N,n\in\N$ that
$y_t^{N,n}\leq z_t^{N,n}$
and for all $N,n\in\N$ that
$\sup_{t\in[0,\infty)}z_t^{N,n}=\max\left\{z_0^{N,n},\tfrac{2c_0^n}{\lambda}\right\}$.
We obtain for all $n\in\N$ that
\begin{equation} \begin{split}
  &
  \sup\limits_{N\in\N}\sup\limits_{t\in[0,\infty)}y_t^{N,n}
  \leq
  \sup\limits_{N\in\N}\sup\limits_{t\in[0,\infty)}z_t^{N,n}
  =
  \max\left\{\sup\limits_{N\in\N}z_0^{N,n},\tfrac{2c_0^n}{\lambda}\right\}
  \\
  &\qquad
  \leq
  \sup\limits_{N\in\N}\E\left[\sum_{i\in\D_n}\sigma_i\left(2c_1\tfrac{P_0^N}{\left(H_0^N\right)^2}
  +\tfrac{1}{2\delta}\tfrac{1}{\left(H_0^N\right)^2}\right)\right]
  +\tfrac{2c_0^n}{\lambda}.
\end{split} \end{equation}
Using monotone convergence we thereby conclude
\begin{equation} \begin{split}
  &\sup\limits_{N\in\N}\sup\limits_{t\in[0,\infty)}
  \E\left[\sum_{i\in\hat{\D}}\sigma_i\left(
  \tfrac{2}{\lambda+\nu}\tfrac{P_t^N(i)}{\left(H_t^N(i)\right)^2}+\tfrac{1}{2\delta}\tfrac{1}{\left(H_t^N(i)\right)^2}\right)\right]
  \leq
  \lim_{n\to\infty}
  \sup\limits_{N\in\N}\sup\limits_{t\in[0,\infty)}y_t^{N,n}
  \\
  &\quad
  \leq
  \lim_{n\to\infty}
  \left(
  \sup\limits_{N\in\N}\E\left[\sum_{i\in\D_n}\sigma_i\left(2c_1\tfrac{P_0^N}{\left(H_0^N\right)^2}
  +\tfrac{1}{2\delta}\tfrac{1}{\left(H_0^N\right)^2}\right)\right]
  +\tfrac{2c_0^n}{\lambda}
  \right)
  \\
  &\quad
  \leq
  \sup\limits_{N\in\N}\E\left[\sum_{i\in\hat{\D}}\sigma_i\left(\tfrac{2}{\lambda+\nu}\tfrac{P_0^N}{\left(H_0^N\right)^2}
  +\tfrac{1}{2\delta}\tfrac{1}{\left(H_0^N\right)^2}\right)\right]
  +\tfrac{4\bar{\kappa}_{P}c}{3\lambda(\lambda+\nu)}
  \sup\limits_{N\in\N}\sup\limits_{t\in[0,\infty)}\E\left[\sum_{i\in\hat{\D}}\sigma_i\left(P_t^N(i)\right)^3\right]
  \\
  &\qquad\quad
  +\tfrac{4}{\lambda(\lambda+\nu)}
  \left(\tfrac{\eta^2}{\lambda}
  +\tfrac{4\lambda}{K^2}
  \right)
  \sup\limits_{N\in\N}\sup\limits_{t\in[0,\infty)}\E\left[\sum_{i\in\hat{\D}}\sigma_iP_t^N(i)\right]
  +\tfrac{2}{K^2\delta},
\end{split} \end{equation}
completes the proof of Lemma \ref{lem:bound.norm.1/H^2}.
\end{proof}
\begin{proof}[Proof of Lemma \ref{lem:bound.norm.1/P}]
If the right-hand side of \eqref{eq:claim.lemma.1/P} is infinite, then the claim trivially holds.
For the remainder of the proof assume the right-hand side of \eqref{eq:claim.lemma.1/P} to be finite.
Define $\D_0:=\emptyset$ and 
for every $n\in\N$ let $\D_n\subseteq\hat{\D}$ be a set with $|\D_n|=\min\left\{n,|\hat{\D}|\right\}$ and $\D_n\supseteq\D_{n-1}$.
Define
$c_0:=\tfrac{1}{2(\bar{\kappa}_P+\nu)}\left[(\eta-\rho )-\tfrac{\lambda}{K}\right]$
and for every $n\in\N$ let
\begin{equation}
  C^n:=
  \gamma c_0
  +\Bigg[\gamma+\delta\Bigg]
  \sup\limits_{N\in\N}\sup\limits_{t\in[0,\infty)}
  \E\left[\sum_{i\in\D_n}\sigma_i\tfrac{1}{H_t^N(i)}\right].
\end{equation}
Note that due to the assumption $\eta-\rho>\tfrac{\lambda}{K}$ we have $c_0\in(0,\infty)$.
For all $N,n,l\in\N$ define $[0,\infty]$-valued stopping times
\begin{equation}
  \tau_l^{N,n}:=\inf\left(\left\{t\in[0,\infty):\sum_{i\in\D_n}\sigma_i
  \left(\left(P_t^N(i)\right)^{-1}+\left(H_t^N(i)\right)^{-1}\right)>l\right\}\cup\infty\right).
\end{equation}
We infer from Lemmas \ref{lem:H.positive} and \ref{lem:P.positive}
that for all $N,n\in\N$ the map
$[0,\infty)\ni t\mapsto \sum_{i\in\D_n}\sigma_i\left(\left(P_t^N(i)\right)^{-1}+\left(H_t^N(i)\right)^{-1}\right)\in\R$
is $\P$-a.s.~continuous
which implies that we have for all $t\in[0,\infty)$ and all $N,n\in\N$ that
\begin{equation} \label{eq:lem.1/P.lim.tau>t}
  \P\left[\lim_{l\to\infty}\tau_l^{N,n}<t\right]=0.
\end{equation}
For all $t\in[0,\infty)$, $N,n,l\in\N$ applying Young's inequality
we see that
\begin{equation} \begin{split}
  &\E\left[\sum_{i\in\D_n}\sigma_i
  \int_0^{t\land\tau_l^{N,n}}
  \left(\tfrac{\sqrt{\beta_P^N P_u^N(i)}}{\left(P_u^N(i)\right)^2}
  \left(c_0+\tfrac{1}{H_u^N(i)}\right)\right)^2
  \,du
  \right]
  \\
  &\quad
  \leq
  \betab_P
  \E\left[
  t\sup_{u\in[0,t]}
  \sum_{i\in\D_n}\tfrac{\sigma_i^5}{\min\limits_{k\in\D_n}\left\{\sigma_k^4\right\}}
  \left(
  \tfrac{3}{5}\left(P_{u\land\tau_l^{N,n}}^N(i)\right)^{-5}
  +\tfrac{2}{5}\left(c_0+\left(H_{u\land\tau_l^{N,n}}^N(i)\right)^{-1}\right)^5
  \right)
  \,du
  \right]
  \\
  &\quad
  \leq
  \tfrac{\betab_P}{\min\limits_{k\in\D_n}\left\{\sigma_k^4\right\}}
  \E\left[t\sup_{u\in[0,t]}
  \left(
  \sum_{i\in\D_n}\sigma_i
  \left(
  \left(P_{u\land\tau_l^{N,n}}^N(i)\right)^{-1}
  +\left(H_{u\land\tau_l^{N,n}}^N(i)\right)^{-1}
  +c_0
  \right)
  \right)^5
  \right]
  \leq
  \tfrac{\betab_Pt\left(
  l+c_0\|\underline{1}\|_\sigma
  \right)^5}{\min\limits_{k\in\D_n}\left\{\sigma_k^4\right\}}
  <\infty
\end{split} \end{equation}
and
\begin{equation} \begin{split}
  &
  \E\left[\sum_{i\in\D_n}\sigma_i
  \int_0^{t\land\tau_l^{N,n}}
  \left(\tfrac{\sqrt{\beta_H^N H_u^N(i)}}{\left(H_u^N(i)\right)^2P_u^N(i)}\right)^2
  \,du\right]
  \\
  &\quad
  \leq
  \betab_H
  \E\left[
  t\sup_{u\in[0,t]}
  \sum_{i\in\D_n}\tfrac{\sigma_i^5}{\min\limits_{k\in\D_n}\left\{\sigma_k^4\right\}}
  \left(\tfrac{3}{5}\left(H_{u\land\tau_l^{N,n}}^N(i)\right)^{-5}+\tfrac{2}{5}\left(P_{u\land\tau_l^{N,n}}^N(i)\right)^{-5}\right)\,du\right]
  \\
  &\quad
  \leq
  t\sup_{u\in[0,t]}
  \tfrac{\betab_H}{\min\limits_{k\in\D_n}\left\{\sigma_k^4\right\}}
  \E\left[
  \left(
  \sum_{i\in\D_n}\sigma_i
  \left(\left(H_{u\land\tau_l^{N,n}}^N(i)\right)^{-1}+\left(P_{u\land\tau_l^{N,n}}^N(i)\right)^{-1}\right)\right)^5\right]
  \leq
  \tfrac{t\betab_Hl^5}{\min\limits_{k\in\D_n}\left\{\sigma_k^4\right\}}
  <\infty.
\end{split} \end{equation}
Hence, we obtain for all $t\in[0,\infty)$ and all $N,n,l\in\N$ that
\begin{equation}  \begin{split} \label{eq:lem.1/P+1/PH.E.stoch.int.0}
  &\E\left[\int_0^{t\land\tau_l^{N,n}}
  \sum_{i\in\D_n}\sigma_i\sqrt{\beta_P^N P_t^N(i)}\tfrac{1}{\left(P_t^N(i)\right)^2}
  \left(c_0+\tfrac{1}{H_t^N(i)}\right)
  \,dW_u^{P,N}(i)\right]
  =0,
  \\
  &\E\left[\int_0^{t\land\tau_l^{N,n}}
  \sum_{i\in\D_n}\sigma_i\tfrac{\sqrt{\beta_H^N H_t^N(i)}}{\left(H_t^N(i)\right)^2P_t^N(i)}
  \,dW_u^{H,N}(i)\right]
  =0.
\end{split} \end{equation}
Define the function $y\colon\N\times\N\times[0,\infty)\to[0,\infty]$ by
\begin{equation}
  \N\times\N\times[0,\infty)\ni (N,n,t)\mapsto y_t^{N,n}:=\E\left[\sum_{i\in\D_n}\sigma_i\left(c_0\tfrac{1}{P_t^N(i)}+\tfrac{1}{P_t^N(i)H_t^N(i)}\right)\right].
\end{equation}
Recall from the beginning of the proof that we assume for all $N,n\in\N$ that $y_0^{N,n}<\infty$.
Applying It\^o's lemma and using \eqref{eq:lem.1/P+1/PH.E.stoch.int.0},
we get for all $t\in[0,\infty)$ and all $N,n,l\in\N$ that
\begin{equation} \begin{split}
  &\E\left[\sum_{i\in\D_n}\sigma_i\left(c_0\tfrac{1}{P_{t\land\tau_l^{N,n}}^N(i)}+\tfrac{1}{P_{t\land\tau_l^{N,n}}^N(i)H_{t\land\tau_l^{N,n}}^N(i)}\right)\right]
  -y_0^{N,n}
  \\
  &\quad
  =
  \E\Bigg[\sum_{i\in\D_n}\sigma_i\int_0^{t\land\tau_l^{N,n}}
  -\left(c_0\tfrac{1}{\left(P_u^N(i)\right)^2}+\tfrac{1}{\left(P_u^N(i)\right)^2H_u^N(i)}
  \right)
  \Bigg(\kappa_P^N\sum_{j\in\D} m(i,j)P_u^N(j)
  \\
  &\qquad
  -(\kappa_P^N+\nu)P_u^N(i)
  -\gamma \left(P_u^N(i)\right)^2
  +\left(\eta-\rho F_u^N(i)\right)P_u^N(i)H_u^N(i)
  +\iota_P^N\Bigg)
  +\tfrac{1}{2}c_0\tfrac{2}{\left(P_u^N(i)\right)^3}\beta_P^NP_u^N(i)
  \\
  &\qquad
  +\tfrac{1}{2}\tfrac{2}{\left(P_u^N(i)\right)^3H_u^N(i)}\beta_P^NP_u^N(i)
  -\tfrac{1}{P_u^N(i)\left(H_u^N(i)\right)^2}
  \Bigg(\kappa_H^N\sum_{j\in\D} m(i,j)H_u^N(j)
  +(-\kappa_H^N+\lambda-\alpha^NF_u^N(i))H_u^N(i)
  \\
  &\qquad
  -\tfrac{\lambda}{K}\left(H_u^N(i)\right)^2
  -\delta P_u^N(i)H_u^N(i)
  +\iota_H^N\Bigg)
  +\tfrac{1}{2}\tfrac{2}{P_u^N(i)\left(H_u^N(i)\right)^3}\beta_H^NH_u^N(i)
  \,du\Bigg].
\end{split} \end{equation}
Dropping some negative terms, we now get for all $t\in[0,\infty)$ and all $N,n,l\in\N$ that
\begin{equation} \begin{split}
    &\E\left[\sum_{i\in\D_n}\sigma_i\left(c_0\tfrac{1}{P_{t\land\tau_l^{N,n}}^N(i)}+\tfrac{1}{P_{t\land\tau_l^{N,n}}^N(i)H_{t\land\tau_l^{N,n}}^N(i)}\right)\right]
    -y_0^{N,n}
    \\
    &\leq
    \E\Bigg[\sum_{i\in\D_n}\sigma_i\int_0^{t\land\tau_l^{N,n}}
    -\left(c_0\tfrac{1}{\left(P_u^N(i)\right)^2}+\tfrac{1}{\left(P_u^N(i)\right)^2H_u^N(i)}
    \right)
    \Bigg(
    -(\kappa_P^N+\nu)P_u^N(i)
    -\gamma \left(P_u^N(i)\right)^2
    \\
    &\qquad
    +\left(\eta-\rho\right)P_u^N(i)H_u^N(i)
    +\iota_P^N\Bigg)
    +c_0\beta_P^N\tfrac{1}{\left(P_u^N(i)\right)^2}
    +\beta_P^N\tfrac{1}{\left(P_u^N(i)\right)^2H_u^N(i)}
    +\beta_H^N\tfrac{1}{P_u^N(i)\left(H_u^N(i)\right)^2}
    \\
    &\qquad
    -\tfrac{1}{P_u^N(i)\left(H_u^N(i)\right)^2}
    \left(
    (-\kappa_H^N+\lambda-\alpha^N)H_u^N(i)
    -\tfrac{\lambda}{K}\left(H_u^N(i)\right)^2
    -\delta P_u^N(i)H_u^N(i)
    +\iota_H^N\right)
    \,du\Bigg]
    \\
    &=
    \E\Bigg[\sum_{i\in\D_n}\sigma_i\int_0^{t\land\tau_l^{N,n}}
    \left(\kappa_P^N+\nu\right)c_0\tfrac{1}{P_u^N(i)}
    +\gamma c_0
    -(\eta-\rho)c_0\tfrac{H_u^N(i)}{P_u^N(i)}
    -\iota_P^Nc_0\tfrac{1}{\left(P_u^N(i)\right)^2}
    +(\kappa_P^N+\nu)\tfrac{1}{P_u^N(i)H_u^N(i)}
    \\
    &\quad
    +\gamma \tfrac{1}{H_u^N(i)}
    -\left(\eta-\rho\right)\tfrac{1}{P_u^N(i)}
    -\iota_P^N\tfrac{1}{\left(P_u^N(i)\right)^2H_u^N(i)}
    +c_0\beta_P^N\tfrac{1}{\left(P_u^N(i)\right)^2}
    +\beta_P^N\tfrac{1}{\left(P_u^N(i)\right)^2H_u^N(i)}
    +\beta_H^N\tfrac{1}{P_u^N(i)\left(H_u^N(i)\right)^2}
    \\
    &\quad
    -(-\kappa_H^N+\lambda-\alpha^N)\tfrac{1}{P_u^N(i)H_u^N(i)}
    +\tfrac{\lambda}{K}\tfrac{1}{P_u^N(i)}
    +\delta \tfrac{1}{H_u^N(i)}
    -\iota_H^N\tfrac{1}{P_u^N(i)\left(H_u^N(i)\right)^2}
    \,du\Bigg]
    \\
    &=
    \E\Bigg[\sum_{i\in\D_n}\sigma_i\int_0^{t\land\tau_l^{N,n}}
    \left[\left(\kappa_P^N+\nu\right)c_0
    -\left(\eta-\rho\right)
    +\tfrac{\lambda}{K}
    \right]
    \tfrac{1}{P_u^N(i)}
    +\gamma c_0
    -(\eta-\rho)c_0
    \tfrac{H_u^N(i)}{P_u^N(i)}
    \\
    &\quad
    +\left[-\iota_P^Nc_0
    +c_0\beta_P^N
    \right]
    \tfrac{1}{\left(P_u^N(i)\right)^2}
    +\left[(\kappa_P^N+\nu)
    -(-\kappa_H^N+\lambda-\alpha^N)
    \right]
    \tfrac{1}{P_u^N(i)H_u^N(i)}
    +\left[\gamma
    +\delta
    \right]
    \tfrac{1}{H_u^N(i)}
    \\
    &\quad
    +\left[-\iota_P^N
    +\beta_P^N
    \right]
    \tfrac{1}{\left(P_u^N(i)\right)^2H_u^N(i)}
    +\left[\beta_H^N
    -\iota_H^N
    \right]
    \tfrac{1}{P_u^N(i)\left(H_u^N(i)\right)^2}
    \,du\Bigg].
  \end{split} \end{equation}
Recall from Section \ref{sec:convF_setting} that $\bar{\kappa}_P=\sup_{N\in\N}\kappa_P^N$,
and from Assumption \ref{ass:conv_LotkaVolterra} that $\lambda>\nu$, $\eta-\rho >\tfrac{\lambda}{K}$
and that for all $N\in\N$ we have
$\kappa_P^N+\kappa_H^N+\alpha^N\leq\tfrac{\lambda-\nu}{2}$,
$\iota_P^N\geq\beta_P^N$, and
$\iota_H^N\geq\beta_H^N$.
Hence, we get for all $t\in[0,\infty)$ and all $N,n,l\in\N$ that
\begin{equation} \begin{split}
  &\E\left[\sum_{i\in\D_n}\sigma_i\left(c_0\tfrac{1}{P_{t\land\tau_l^{N,n}}^N(i)}+\tfrac{1}{P_{t\land\tau_l^{N,n}}^N(i)H_{t\land\tau_l^{N,n}}^N(i)}\right)\right]
  -y_0^{N,n}
  \\
  &\quad
  \leq
  \E\Bigg[\sum_{i\in\D_n}\sigma_i\int_0^{t\land\tau_l^{N,n}}
  -\left(\bar{\kappa}_P+\nu\right)c_0
  \tfrac{1}{P_u^N(i)}
  +\gamma c_0
  -\tfrac{\lambda-\nu}{2}
  \tfrac{1}{P_u^N(i)H_u^N(i)}
  +\left[\gamma
  +\delta
  \right]
  \tfrac{1}{H_u^N(i)}
  \,du\Bigg]
  \\
  &\quad
  \leq
  \int_0^t
  C^n
  \,du
  -\E\left[
  \int_0^{t\land\tau_l^{N,n}}
  \min\left\{\bar{\kappa}_P+\nu,\tfrac{\lambda-\nu}{2}\right\}
  \sum_{i\in\D_n}\sigma_i\left(c_0\tfrac{1}{P_u^N(i)}+\tfrac{1}{P_u^N(i)H_u^N(i)}\right)\,du\right].
\end{split} \end{equation}
Applying Tonelli's theorem, Fatou's lemma, and \eqref{eq:lem.1/P.lim.tau>t} we obtain for all
$t\in[0,\infty)$ and all $N,n\in\N$ that
\begin{equation} \begin{split}
  &y_t^{N,n}+
  \int_0^t
  \min\left\{\bar{\kappa}_P+\nu,\tfrac{\lambda-\nu}{2}\right\}
  y_u^{N,n}\,du
  \\
  &\quad
  =
  y_t^{N,n}+\E\left[
  \int_0^t
  \min\left\{\bar{\kappa}_P+\nu,\tfrac{\lambda-\nu}{2}\right\}
  \sum_{i\in\D_n}\sigma_i\left(c_0\tfrac{1}{P_u^N(i)}+\tfrac{1}{P_u^N(i)H_u^N(i)}\right)\,du\right]
  \\
  &\quad
  \leq
  \liminf\limits_{l\to\infty}
  \Bigg(
  \E\left[\sum_{i\in\D_n}\sigma_i\left(c_0\tfrac{1}{P_{t\land\tau_l^{N,n}}^N(i)}+\tfrac{1}{P_{t\land\tau_l^{N,n}}^N(i)H_{t\land\tau_l^{N,n}}^N(i)}\right)\right]
  \\
  &\qquad
  +\E\left[
  \int_0^{t\land\tau_l^{N,n}}
  \min\left\{\bar{\kappa}_P+\nu,\tfrac{\lambda-\nu}{2}\right\}
  \sum_{i\in\D_n}\sigma_i\left(c_0\tfrac{1}{P_u^N(i)}+\tfrac{1}{P_u^N(i)H_u^N(i)}\right)\,du\right]
  \Bigg)
  \leq
  y_0^{N,n}
  +\int_0^tC^n\,du.
\end{split} \end{equation}
For every $N,n\in\N$,
let $z^{N,n}\colon[0,\infty)\to\R$ be a process that for all $t\in[0,\infty)$ satisfies
$z_t^{N,n}=z_0^{N,n}+\int_0^t\big(C^n-\min\left\{\bar{\kappa}_P+\nu,\tfrac{\lambda-\nu}{2}\right\}z_s^{N,n}\big)\,ds$,
with $z_0^{N,n}=y_0^{N,n}$,
where uniqueness follows from local Lipschitz continuity.
Using classical comparison results from the theory of ODEs,
the above computation yields for all $t\in[0,\infty)$ and all $N,n\in\N$
that $y_t^{N,n}\leq z_t^{N,n}$ and for all $N,n\in\N$ that
$\sup_{t\in[0,\infty)}z_t^{N,n}=\max\Big\{z_0^{N,n},\tfrac{C^n}{\min\left\{\bar{\kappa}_P+\nu,\frac{\lambda-\nu}{2}\right\}}\Big\}$.
Hence, we obtain for every $n\in\N$ that
\begin{equation} \begin{split}
  &\sup\limits_{N\in\N}\sup\limits_{t\in[0,\infty)}
  \E\left[\sum_{i\in\D_n}\sigma_i\left(c_0\tfrac{1}{P_t^N(i)}+\tfrac{1}{P_t^N(i)H_t^N(i)}\right)\right]
  =
  \sup\limits_{N\in\N}\sup\limits_{t\in[0,\infty)}
  y_t^{N,n}
  \leq
  \sup\limits_{N\in\N}\sup\limits_{t\in[0,\infty)}z_t^{N,n}
  \\
  &\qquad
  =
  \max\left\{\sup\limits_{N\in\N}z_0^{N,n},\tfrac{C^n}{\min\left\{\bar{\kappa}_P+\nu,\frac{\lambda-\nu}{2}\right\}}\right\}
  \\
  &\qquad
  \leq
  \sup\limits_{N\in\N}
  \E\left[\sum_{i\in\D_n}\sigma_i\left(c_0\tfrac{1}{P_0^N(i)}+\tfrac{1}{P_0^N(i)H_0^N(i)}\right)\right]
  +\tfrac{C^n}{\min\left\{\bar{\kappa}_P+\nu,\frac{\lambda-\nu}{2}\right\}}.
\end{split} \end{equation}
Using monotone convergence, we thereby conclude that
\begin{equation} \begin{split}
  &\sup\limits_{N\in\N}\sup\limits_{t\in[0,\infty)}
  \E\left[\sum_{i\in\hat{\D}}\sigma_i\left(c_0\tfrac{1}{P_t^N(i)}+\tfrac{1}{P_t^N(i)H_t^N(i)}\right)\right]
  =
  \lim_{n\to\infty}
  \sup\limits_{N\in\N}\sup\limits_{t\in[0,\infty)}
  \E\left[\sum_{i\in\D_n}\sigma_i\left(c_0\tfrac{1}{P_t^N(i)}+\tfrac{1}{P_t^N(i)H_t^N(i)}\right)\right]
  \\
  &\quad
  \leq
  \lim_{n\to\infty}
  \left(
  \sup\limits_{N\in\N}
  \E\left[\sum_{i\in\D_n}\sigma_i\left(c_0\tfrac{1}{P_0^N(i)}+\tfrac{1}{P_0^N(i)H_0^N(i)}\right)\right]
  +\tfrac{C^n}{\min\left\{\bar{\kappa}_P+\nu,\frac{\lambda-\nu}{2}\right\}}
  \right)
  \\
  &\quad
  =
  \sup\limits_{N\in\N}
  \E\left[\sum_{i\in\hat{D}}\sigma_i\left(c_0\tfrac{1}{P_0^N(i)}+\tfrac{1}{P_0^N(i)H_0^N(i)}\right)\right]
  +\tfrac{\gamma c_0
  +(\gamma+\delta)
  \sup\limits_{N\in\N}\sup\limits_{t\in[0,\infty)}
  \E\left[\sum_{i\in\hat{D}}\sigma_i\tfrac{1}{H_t^N(i)}\right]}{\min\left\{\bar{\kappa}_P+\nu,\frac{\lambda-\nu}{2}\right\}},
\end{split} \end{equation}
finishing the proof of Lemma \ref{lem:bound.norm.1/P}.
\end{proof}

\section*{Acknowledgement}
This paper has been partially supported by the DFG Priority Program ``Probabilistic Structures in Evolution'' (SPP 1590), grants HU 1889/3-2 and ME 3134/6-2.
\def\cprime{$'$}
  \hyphenation{Sprin-ger}\def\polhk$1{\setbox0=\hbox{$1}{\ooalign{\hidewidth
  \lower1.5ex\hbox{`}\hidewidth\crcr\unhbox0}}} \def\cprime{$'$}

\end{document}